\documentclass{amsart}
\usepackage{a4wide}

\usepackage[T1]{fontenc}
\usepackage[utf8]{inputenc}
\usepackage{amsmath,amssymb,amsthm}

\usepackage{array}
\usepackage{tabularx}
\usepackage{hyperref}
\usepackage{stmaryrd}
\usepackage{xcolor}
\renewcommand{\color}[1]{}
\usepackage{textcase}

\newtheorem{thm}{Theorem}[section]
\newtheorem{lem}[thm]{Lemma}
\newtheorem{prop}[thm]{Proposition}
\newtheorem{cor}[thm]{Corollary}

\theoremstyle{definition}
\newtheorem{defn}[thm]{Definition}

\theoremstyle{remark}
\newtheorem{rem}[thm]{Remark}

\title[Vanishing--Noise Asymptotics of Donsker--Varadhan Rate Functions]{Vanishing--Noise Asymptotics of Donsker--Varadhan Rate Functions on the Circle}

\author{Milan Koresski}
\address{Department of Mathematics,
CY Cergy Paris Université,
2 avenue Adolphe Chauvin,
95302 Cergy-Pontoise, France}
\email{milankoresski@wanadoo.fr}

\keywords{ second-order elliptic operators,
principal eigenvalue,
weak KAM theory,
Hamilton--Jacobi equations,
Legendre transform,
$\Gamma$-convergence}

\begin{document}

\begin{abstract}
We study the vanishing noise limit of the rate function for the Donsker-Varadhan large deviation principle for one-dimensional diffusion processes on a circle. As is well known, the rate function can be represented either as the Legendre transform of {\color{red}principal} eigenvalue of the perturbed infinitesimal generator or by a variational formula. We analyze the asymptotic behavior of the {\color{red}principal} eigenvalue as the parameter in front of the noise goes to zero and compare the Legendre transform of the limit with the expression obtained as the limit of the variational representation. We prove, in particular, that the resulting expressions do not always coincide, leading to continuity and discontinuity phenomena in infinite-dimensional functional spaces. {\color{red}Moreover, we use the previous analysis to pass to the limit in the LDP, using the notion of $\Gamma$-convergence.}
\end{abstract}

\maketitle
{\color{red}
\tableofcontents}
\newpage
\section{Introduction}

Let \((X_t^\varepsilon)_{t \ge 0}\) be the family of one-dimensional diffusion processes on the unit circle \(\Pi\) defined as the solution of the stochastic differential equation
\begin{equation*}
    dX_t^\epsilon = \sqrt{\epsilon}\,\sigma(X_t^\epsilon)\,dW_t + b(X_t^\epsilon)\,dt,
\end{equation*}
where $b$ and $\sigma$ are $C^2(\Pi)$ functions on $\Pi$, $W_t$ denotes the standard Brownian motion, and $\epsilon > 0$ is a small noise amplitude. Our goal is to study the behaviour of the associated large-deviation rate functions in the small-noise limit $\epsilon \to 0$.

\subsection{Empirical measure and large deviations}

For $T > 0$, we define the empirical measure of the diffusion by
\begin{equation*}
    \mu_T^\epsilon = \frac{1}{T} \int_0^T \delta_{X_t^\epsilon}\,dt.
\end{equation*}
{\color{red}
It is a classical result in Donsker--Varadhan \cite{DonskerVaradhan1975} (Theorem 3, (1.11) and (1.12), pages 5-6) that the family $(\mu_T^\epsilon)_{T>0}$ satisfies a Large Deviation Principle (LDP) on the space of probability measures on $\Pi$, with speed $T$ and rate function
\begin{equation*}
    I_\epsilon(\mu)
    = \sup_{\substack{u > 0 \\ u \, \in \, C^2(\Pi)}}
        - \int_{\Pi} \frac{\mathcal{L}_\epsilon u}{u} \, d\mu. 
\end{equation*}
Here $\mathcal{L}_\epsilon$ denotes the infinitesimal generator of $X_t^\epsilon$, defined on $C^2(\Pi)$ by

\begin{equation*}
    \mathcal{L}_\epsilon u(x) = \epsilon\, a(x)\,u''(x) + b(x)\,u'(x) \,\,\, \text{with} \,\,\, a(x) := \sigma(x)^2/2.
\end{equation*}}

It has been established, see \cite{FengKurtz2006} (Proposition 3.8, page 45), that this variational expression is equivalent to a Legendre-Fenchel transform representation:
\begin{equation*}
    I_\epsilon(\mu)
    = \sup_{c \,\in \, C^0(\Pi)}
    \left\{
        \int_{\Pi} c \, d\mu
        - \Lambda_\epsilon(c)
    \right\},
\end{equation*}
where
\begin{equation*}
    \Lambda_\epsilon(c)
    := \lim_{T \to \infty}
       \frac{1}{T} \log \mathbb{E}_x \left[
           \exp\left( \int_0^T c(X_s^\epsilon) \, ds \right)
       \right].
\end{equation*}
Moreover, it is well known, see \cite{DonskerVaradhan1976} ((1.2), page 1), that $\Lambda_\epsilon(c)$ coincides with $\lambda^c_\epsilon$, the {\color{red}principal} eigenvalue {\color{red}(the eigenvalue with the maximal real part)} of the perturbed operator
\begin{equation*}
    L_\epsilon := \mathcal{L}_\epsilon + c.
\end{equation*}

\subsection{The vanishing-noise problem}

Our aim is to characterize the asymptotic behaviour of the rate function in the regime where the noise parameter $\varepsilon$ tends to $0$. Formally, for any probability measure $\mu \in P(\Pi)$ we define
\begin{equation*}
    I(\mu) := \lim_{\epsilon \to 0} I_\epsilon(\mu) = \lim_{\epsilon \to 0} \sup_{\substack{u > 0 \\ u \, \in \, C^2(\Pi)}}
        - \int_{\Pi} \frac{\mathcal{L}_\epsilon u}{u} \, d\mu
    ,\,\,\, \text{and} \,\,\, \Lambda(c) := \lim_{\epsilon \to 0} \Lambda_\epsilon(c),
\end{equation*}
whenever these limits exist, and
\begin{equation*}
    J(\mu)
    := \sup_{c \, \in \, C^0(\Pi)}
       \left\{
           \int_{\Pi} c \, d\mu
           - \Lambda(c)
       \right\}.
\end{equation*}
Taking $0$ as a test function in the above variational expressions, one can prove that both $I_\epsilon$ and $I$ are nonnegative, and likewise $J(\mu) \ge 0$ for all probability measures $\mu$.

\vspace{\baselineskip}

The existence of the limit $I(\mu)$ for an arbitrary measure $\mu$ is not immediate, nor is the relation between the functions $I$ and $J$: the functional $I$ is the limit of Legendre transforms, whereas $J$ is the Legendre transform of the limit. Thus, the central question is whether the limit defining $I$ exists and whether, in this setting, the Legendre transform commutes with the small-noise limit; in other words, whether
\begin{equation}\label{1}
    \lim_{\epsilon \to 0} I_\epsilon(\mu)
    \stackrel{?}{=}
    \sup_{c \, \in \, C^0(\Pi)}
    \left\{
        \int_{\Pi} c \, d\mu
        - \lim_{\epsilon \to 0} \Lambda_\epsilon(c)
    \right\}.
\end{equation}

{\color{red}
Moreover, after addressing this question in different settings, we show that, in some of these cases, the previously obtained results, combined with the notion of $\Gamma$-convergence, allow us to establish large deviation results in the double limit for the empirical measure
$\mu_T^\epsilon$.}

\subsection{Strategy and main results}

In a first step, to compare the two functionals $I$ and $J$, we carry out a detailed asymptotic analysis of $\lambda_\epsilon^{\,c}$, the principal eigenvalue of the operator
\begin{equation}
\label{2}
    L_\epsilon(u_\epsilon)
    := \epsilon\, a(x)\, u_\epsilon'' + b(x)\, u_\epsilon' + c(x)\, u_\epsilon
    = \lambda_\epsilon^{\,c}\, u_\epsilon,
\end{equation}
where we have made explicit the dependence of the principal eigenvalue on the potential~$c$. For the sake of clarity, this dependence will be omitted in the notation from now on. This study is based on the same arguments as in \cite{PiatnitskiRybalko2015} and will allow us to obtain an explicit formula for $J$. In a second step, we identify the cases in which $I$ can be computed explicitly, and employ these expressions for comparing it with $J$. {\color{red} Finally, we show that $I_\varepsilon$ $\Gamma$-converges to $J$, and establish large deviation principles in the double limit.}

\vspace{\baselineskip}

As we shall see, the nature of the asymptotic regime as $\epsilon \to 0$ depends crucially on the qualitative properties of the drift $b$. Different behaviours arise depending on whether $b$ is separated from $0$, whether it vanishes, or whether it changes sign. These structural features govern the geometry of the deterministic flow and, in turn, the spectral asymptotics of the principal eigenvalue in \eqref{2}, leading to different explicit expressions for the functions $I$ and $J$. We shall therefore distinguish three regimes in our analysis:
\begin{itemize}
    \item[(H1)] the fully degenerate case where $b \equiv 0$;
    \item[(H2)] the nondegenerate case where $b$ is separated from $0$ on $\Pi$;
    \item[(H3)] the sign-changing case, in which $b$ takes both positive and negative values and admits a finite number of zeros.
\end{itemize}

Many results have already been established concerning the asymptotic analysis of the {\color{red}principal} eigenvalue; under assumptions (H1) and (H2), the asymptotic behaviour is now regarded as classical. Here we present some known results, together with their sources, in a setting of assumptions comparable to (H3).

\vspace{\baselineskip}

The paper by Peng, Zhang and Zhou \cite{PengZhangZhou2020} provides a detailed analysis of the asymptotic behaviour of the {\color{red}principal} eigenvalue of a second–order elliptic operator in the regime of small and large diffusion when the field $b$ is the gradient of a function $m$. Indeed, Theorem 1.1 and (1.2), which deal with the case of N-dimensional operators of type (1.1), provide similar results to the ones we obtain. However, the hypotheses require that $\det(D^{2} m(x)) \neq 0$. In our setting, their conditions correspond precisely to the case of a finite number of hyperbolic zeros. Similarly, the article \cite{PiatnitskiRybalko2015}  by A. Piatnitski, A. Rybalko and V. Rybalko proves a comparable result under the same hyperbolicity assumption on the zeros and establish a similar formula (see (2.15) and (2.18)). In \cite{Kifer1980} and \cite{Kifer1990}, Kifer obtained several fundamental results on the asymptotic behaviour of the principal eigenvalue in the small--diffusion regime however, the analysis strongly relies on hyperbolicity assumptions on the critical points or invariant sets of the drift (see Condition (C) and (2.3) in Theorem 2.1 in  \cite{Kifer1980}). In our paper, in the one dimensional setting, a suitable generalization of the techniques introduced in \cite{PiatnitskiRybalko2015} makes it possible to obtain the asymptotic description even when the drift has zeros of arbitrary order and flat zeros. This yields a genuine extension of the previously known results in the case of degenerate critical points. 

\vspace{\baselineskip}
{\color{red}
Our work follows an approach analogous to that of \cite{FaggionatoGabrielli2010}. In this spirit, we establish a large deviation principle (LDP) for the empirical measure in the vanishing noise limit regime within a degenerate framework, made possible by the one-dimensional setting in which we work. In  \cite{FaggionatoGabrielli2010}, they study the large deviation principle for the invariant measure in the vanishing noise regime at speed $1/\varepsilon$. Their approach relies on viscosity solution techniques for Hamilton--Jacobi equations, in a way that is closely related to the methods used in our first part to determine the principal eigenvalue. This allows them to identify the rate function as a solution of a stationary Hamilton--Jacobi equation (2.20) given explicitly by (2.12) in Theorems 2.3 and 2.4.}

\vspace{\baselineskip}

{\color{red}
In \cite{DiGesuMariani2017}, more refined convergence results are established in the hyperbolic case for the asymptotic behavior of $I_\varepsilon$ via $\Gamma$-convergence, culminating in an asymptotic expansion of the rate function setting and yielding several large deviation principle estimates (see (1.5) or (2.19) and Theorem 2.3). In a manner analogous to this article, we employ the notion of $\Gamma$-convergence to obtain a generalization to the one-dimensional case when the field admits degenerate zeros of the results stated in Theorem 2.3 (see  (2.14)) and Corollary 2.5 (see (2.24)).  More precisely, in their case, coercivity conditions are imposed on the drift in order to control the trajectories; see (A.2) and (A.3) and Condition A.1 requires the drift $b$ to be the gradient of a $C^\infty$ Morse function and set the work in a hyperbolic framework.

\vspace{\baselineskip}

In \cite{BertiniGabrielliLandim2023}, the authors establish a large deviation principle for some empirical measure associated with a diffusion process on $\mathbb{R}^n$ in the joint regime where the time diverges and the noise intensity vanishes. More precisely, in their study, they show that the family of probability measures describing the empirical distribution of trajectories satisfies a large deviation principle with speed $\frac{T}{\varepsilon}$ (see Theorem 2.2), uniformly with respect to the initial condition in compact sets. The associated rate function is given by the expectation, under the considered measure, of the Freidlin--Wentzell functional per unit time. We obtain a large deviation principle with speed $T$ for the classical empirical measure, which only captures occupation times. By contrast, their empirical measure averages over time-shifted trajectories, thereby leading to a different framework. Finally, in \cite{Raquepas2024}, the entropy production associated with simimar diffusion processes is studied and large deviation principle estimates for this entropy production are obtained in Theorem 6.5 in the small-noise asymptotics and in hyperbolic settings (see Assumption (ND)).}

\begin{table}[h!]
\centering
\tiny
\renewcommand{\arraystretch}{1.5}

\begin{tabular}{|c|
    >{\centering\arraybackslash}m{3.2cm}|
    >{\centering\arraybackslash}m{7.5cm}|}
\hline
\textbf{Hyp.} 
  & \textbf{Limit of $\lambda_\epsilon$} 
  & \textbf{Behaviour of $I(\mu)$ and $J(\mu)$}
  \tabularnewline
\hline

\textbf{(H1)}
& $\displaystyle \max_{x \in \Pi} c(x)$
&
$\displaystyle
\begin{gathered}
J(\mu)=0 \quad \text{for all }\mu\in P(\Pi),\\[0.4em]
I(\mu)=
\begin{cases}
0 & \mu = f\,dx,\; f>0,\ f\in C^1(\Pi),\\[0.2em]
+\infty & \text{in some cases of absolutely continuous measures } \mu,\\[0.2em]
+\infty & \mu \text{ has an isolated atom},\\[0.2em]
+\infty & \text{in some cases of singular continuous measures } \mu.
\end{cases}
\end{gathered}$
\tabularnewline
\hline

\textbf{(H2)}
&
$\displaystyle
\left(\int_{\Pi} \frac{c(s)}{b(s)}\,ds\right)
\Big/
\left(\int_{\Pi} \frac{1}{b(s)}\,ds\right)$
&
$\displaystyle
\begin{gathered}
J(\mu)=
\begin{cases}
0 & \mu=\omega\,ds,\quad
\omega := \bigl(\tfrac{1}{b}/\int_\Pi \tfrac{1}{b}\,dx\bigr),\\
+\infty & \text{otherwise},
\end{cases}\\[0.6em]
I(\mu)=
\begin{cases}
0 & \mu=\omega\,ds,\\
+\infty & \text{otherwise}.
\end{cases}
\end{gathered}$
\tabularnewline
\hline

\textbf{(H3)}
&
$\displaystyle
\max_{\substack{x,\, b(x)=0}}
\bigl(-(0\lor b'(x))+c(x)\bigr)$
&
$\displaystyle
\begin{gathered}
J(\mu)=
\begin{cases}
\sum_{i=1}^n a_i\,(0\lor b'(x_i))
& \operatorname{supp}\mu\subset \{b=0\},\\[0.3em]
+\infty & \text{otherwise},
\end{cases}\\[0.6em]
I(\mu)=+\infty.
\end{gathered}$
\tabularnewline
\hline
\end{tabular}
\end{table}

Each of these regimes exhibits a distinct small-noise asymptotic behaviour for the operator $L_\epsilon$ and therefore requires a separate analysis for the comparison of $I$ and $J$. In the case (H1), the Legendre transform does not commute with the limit $\epsilon \to 0$, since $I$ and $J$ differ on a wide range of probability measures. Indeed we do not have equality on Dirac masses, on some absolutely continuous measures, and also on some singular continuous ones. However, in the case (H2), one exhibits a totally different behavior as $I = J$. Finally under the hypothesis (H3), the limit of $\lambda_\epsilon$ is determined by the behaviour of $b$ near its zeros, and $J$ is finite only for atomic measures supported on these points, with a cost depending on their local stability. By contrast, $I$ always diverges, thus the functions differ on finite values other than only $0$ and $+\infty$.

\vspace{\baselineskip}

Finally, we examine the mixed case in which $b$ changes sign and may admit a finite number of intervals of vanishing and zeros. These results can be viewed mainly as a generalisation and a synthesis of the results obtained in the previous table under assumptions (H1) and (H3). In this setting as well, the functionals $I$ and $J$ differ on a large class of probability measures supported on the set $\{x, b(x)=0\}$. This includes, for instance, measures with isolated atoms, certain absolutely continuous measures, and certain measures with a singular continuous part. In addition, we present partial results concerning the
asymptotics of the associated eigenfunction.

\section{The case where {\color{red}{\normalfont \textit{b}}}  is null}\label{sec:2}
\subsection{ Asymptotic of the {\color{red}principal} eigenvalue}

In this section, we are interested in the asymptotic analysis of the {\color{red}principal} eigenpair for the singularly perturbed
spectral problem \eqref{2} with $a\in C^2(\Pi)$, $b$ equal to 0 and $c\in C^0(\Pi)$. Our operator becomes
\begin{equation}\label{3}
(\textit{L}_\epsilon)(u_{\epsilon}) :=\epsilon \, a(x) u_\epsilon'' + c(x)u_{\epsilon} = \lambda_\epsilon u_{\epsilon}.
\end{equation}

We state the main result of this section in the form of the following proposition.

\begin{prop}\label{prop:2.1}
  Under the assumptions stated above,
\begin{equation}\label{4}
\lim\limits_{\epsilon \rightarrow 0} \lambda_\epsilon = \max\limits_{x \,\in \,\Pi } c(x).
\end{equation} 
\end{prop}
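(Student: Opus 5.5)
We will prove the two inequalities $\limsup_{\epsilon\to0}\lambda_\epsilon\le \max_\Pi c$ and $\liminf_{\epsilon\to0}\lambda_\epsilon\ge \max_\Pi c$ separately. Throughout we use that, by Krein--Rutman (or the Donsker--Varadhan identification $\Lambda_\epsilon(c)=\lambda_\epsilon$), the principal eigenvalue is real and has a strictly positive eigenfunction $u_\epsilon\in C^2(\Pi)$. We also assume, as is implicit in the setting, that $a>0$ on $\Pi$ (i.e.\ $\sigma$ does not vanish). The only other available tool is the variational (min--max) characterization.

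\textbf{Upper bound.} Let $x_\epsilon$ be a maximum point of $u_\epsilon$ on the compact manifold $\Pi$. Then $u_\epsilon''(x_\epsilon)\le 0$, and since $a(x_\epsilon)>0$ and $u_\epsilon(x_\epsilon)>0$,
\[
\lambda_\epsilon=\epsilon\, a(x_\epsilon)\frac{u_\epsilon''(x_\epsilon)}{u_\epsilon(x_\epsilon)}+c(x_\epsilon)\le c(x_\epsilon)\le \max_\Pi c .
\]
This holds for every $\epsilon>0$ and gives the upper bound immediately.

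\textbf{Lower bound.} We need a subsolution argument. The cleanest route uses the symmetrizable structure. Writing $\epsilon a u''+cu=\lambda u$ as $\epsilon u''+\frac{c}{a}u=\frac{\lambda}{a}u$, the problem becomes a weighted Sturm--Liouville problem $-\epsilon u''-\frac{c}{a}u=-\lambda\frac{1}{a}u$, which is self-adjoint in $L^2(\Pi,\frac{dx}{a})$. The principal eigenvalue therefore satisfies
\[
\lambda_\epsilon=\sup_{\varphi\in H^1(\Pi),\,\varphi\ne0}\frac{\int_\Pi\bigl(-\epsilon\varphi'^2+\frac{c}{a}\varphi^2\bigr)dx}{\int_\Pi \frac{\varphi^2}{a}dx}.
\]
Fix $\delta>0$ and choose $x_0$ with $c(x_0)=\max c$. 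By continuity of $c$ there is an interval $U=(x_0-r,x_0+r)$ on which $c>\max c-\delta$. Take $\varphi$ to be a fixed smooth bump supported in $U$, independent of $\epsilon$. Since $a>0$ and $c/a\ge(\max c-\delta)/a$ on $U$, the quotient is at least
\[
\max c-\delta-\epsilon\,\frac{\int\varphi'^2}{\int\varphi^2/a}.
\]
Letting $\epsilon\to0$ and then $\delta\to0$ gives $\liminf_{\epsilon\to0}\lambda_\epsilon\ge\max c$.

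\textbf{Main obstacle.} The main point to check is the justification that the principal eigenvalue of $L_\epsilon$ (the one with maximal real part) coincides with the top of the spectrum in the Rayleigh quotient above. This follows because the operator is similar to a self-adjoint one in the weighted space, so its spectrum is real, and the positive eigenfunction corresponds to the top eigenvalue. If one prefers to avoid self-adjointness, there is an alternative. Pair the equation with the positive principal eigenfunction $v_\epsilon$ of the adjoint operator $L_\epsilon^*$, and compare $L_\epsilon\varphi\ge(\max c-\delta-C\epsilon)\varphi$ on $\operatorname{supp}\varphi$. This comparison fails near the boundary of the support, which is exactly why the variational argument is preferable here.
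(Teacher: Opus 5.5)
Your proposal is correct and follows the same route as the paper. The upper bound $\lambda_\epsilon\le\max_\Pi c$ comes from evaluating the equation at a maximum point of the positive eigenfunction. The lower bound comes from the Rayleigh--Ritz characterization in $L^2(\Pi,dx/a)$, tested with an $\epsilon$-independent function supported where $c>\max_\Pi c-\delta$.

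Two small points. Your weighted quotient $\int_\Pi\bigl(-\epsilon\varphi'^2+\tfrac{c}{a}\varphi^2\bigr)dx / \int_\Pi \tfrac{\varphi^2}{a}\,dx$ is in fact the right one for $\epsilon a u''+cu$; the paper's displayed form $\int(-\varepsilon a|v'|^2+c|v|^2)$ with unweighted normalization belongs to the divergence-form operator $\varepsilon(av')'+cv$. The identification of the principal eigenvalue with the top of this real spectrum is justified because a positive eigenfunction cannot be $L^2(dx/a)$-orthogonal to the positive ground state.
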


\begin{proof}
We begin the proof by showing that $(\lambda_\epsilon)_\epsilon$ is bounded. Indeed, using the maximum and minimum points of the twice differentiable function $u_\epsilon$, we establish using $\eqref{3}$ that for any $\epsilon > 0$, 
    \begin{equation*}
 \min\limits_{x \,\in\, \Pi} c(x) \leq \lambda_\epsilon \leq \max\limits_{x\, \in \,\Pi} c(x).
\end{equation*}

{\color{red} If $c$ is constant, there is nothing to prove. Hence, we assume that it is not constant.} To prove the lower bound, it suffices to remark that $\textit{L}_\epsilon$ is a self-adjoint operator in $L^2(\Pi)$ endowed with the inner product
\[
\langle u, v \rangle := \int_\Pi \frac{u(x)\, v(x)}{a(x)} \, dx.
\]

The characterization of the {\color{red}principal} eigenvalue given by the Rayleigh–Ritz formula \cite{Evans2010} (Theorem 2 (Variational principle for the principal eigenvalue), page 336, applied to operator (3), page 294) yields
\begin{equation*} 
\lambda_\varepsilon
= \sup_{\substack{v \in H^1(\Pi), \, \|v\|_{L^2(\Pi)}=1}}
\int_0^1 \Big(-\varepsilon \, a(x) \,|v'(x)|^2 + c(x)\,|v(x)|^2\Big)\,dx.
\end{equation*}

As $c$ is continuous, for every \(\delta > 0\),
the set $E_\delta := \{ x \in \Pi, \, c(x) > \max_{x \in \Pi} c(x) - \delta \}$ is a countable union of disjoint open intervals. Thus we can choose, \(v_\delta \in H^1_{\Pi}\) such that $\operatorname{supp}(v_\delta) \subset E_\delta$ and $\|v_\delta\|_{L^2} = 1$. Taking that function in the supremum, we obtain
\begin{equation*}
     \max\limits_{x\, \in \,\Pi} c(x) - \delta \leq \liminf_{\varepsilon \to 0} \lambda_\varepsilon.
\end{equation*}

Passing in the limit as $\delta$ tends to $0$, we obtain the desired result.
\end{proof}

\subsection{Explicit calculation of J}

\begin{prop}\label{prop:2.2}
    In our condition for any probability measure $\mu \, \in \, P(\Pi)$ we have
    \begin{equation*}
        J(\mu) = 0.
    \end{equation*}
\end{prop}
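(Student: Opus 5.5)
By Proposition~\ref{prop:2.1}, in the present setting ($b\equiv 0$) we have $\Lambda(c)=\lim_{\epsilon\to 0}\lambda^c_\epsilon=\max_{x\in\Pi}c(x)$ for every $c\in C^0(\Pi)$. The limit therefore exists for all potentials, and
\[
J(\mu)=\sup_{c\in C^0(\Pi)}\Big\{\int_\Pi c\,d\mu-\max_{\Pi}c\Big\}.
\]
The plan is to bound this supremum above and below by $0$.

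For the upper bound, fix $\mu\in P(\Pi)$ and $c\in C^0(\Pi)$. Since $\mu$ is a probability measure and $c\le \max_\Pi c$ pointwise, integrating gives
\[
\int_\Pi c\,d\mu\le \max_\Pi c\cdot\mu(\Pi)=\max_\Pi c .
\]
Hence every term in the supremum is nonpositive, and $J(\mu)\le 0$.

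For the lower bound, I would take $c\equiv 0$ (or any constant). This gives the value $0-0=0$, so $J(\mu)\ge 0$; the introduction already notes this nonnegativity. Combining the two bounds yields $J(\mu)=0$ for all $\mu$.

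There is no real obstacle here. The only point needing care is that Proposition~\ref{prop:2.1} was stated for the standing assumptions $a\in C^2(\Pi)$ with $a>0$, $b\equiv0$ and arbitrary $c\in C^0(\Pi)$. The supremum defining $J$ runs over exactly this class of potentials, so $\Lambda$ is well defined on all of them, with no exceptional cases.
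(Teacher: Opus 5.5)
Your proof is correct and follows the paper's own argument: Proposition~\ref{prop:2.1} gives $\Lambda(c)=\max_\Pi c$, so each term $\int_\Pi c\,d\mu-\max_\Pi c$ is nonpositive because $\mu$ is a probability measure, and the choice $c\equiv 0$ attains the value $0$.
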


\begin{proof}

Let us fix a measure $\mu$. By Proposition \ref{prop:2.1},
\begin{equation*}
J(\mu) = \sup\limits_{c \,\in \, C^0(\Pi) } \left\{ \int_\Pi c \, d\mu - \Lambda(c)\right\} = \sup_{c \,\in \, C^0(\Pi)} \left\{ \int_{\Pi} c \,d\mu - \max\limits_{x \in \Pi } c(x)\right\}.
\end{equation*} 

Moreover, for any \(c \in C^{0}(\Pi)\)
\begin{equation*}
\int_\Pi c \, d\mu - \Lambda(c) = \int_{\Pi} c\, d\mu - \max\limits_{x \in \Pi } c(x) \leq 0.
\end{equation*}

Choosing \(c\equiv 0\) yields
\[
\int_{\Pi} c\, d\mu - \Lambda(c)=0,
\]
therefore \(I(\mu)=0\) as the supremum is attained at \(c\equiv 0\).
\end{proof}

\subsection{Calculation of I}

We are interested in the asymptotic behavior of 

\begin{equation*}
I_\varepsilon(\mu) := \sup\limits_{u > 0, \, u \, \in \, C^2(\Pi) } - \int_{\Pi} \varepsilon  \, a(x) \frac{u''(x)}{u(x)} \, d\mu(x)  
\end{equation*}
as $\varepsilon$ goes to $0$. Using the change of variable $u= exp(w)$ where $w$ is a $C^2(\Pi)$ function we get 
\begin{align}
\label{5}
I_\epsilon(\mu) &= \sup\limits_{w \in C^2(\Pi) } - \int_{\Pi} \epsilon \; a(x)( w''(x)+ w'(x)^2) \, d\mu(x) = \sup\limits_{v \in \Tilde{C}^{1,0}(\Pi)}\phi_{\mu,\epsilon}(v) ,
\end{align}
where $\Tilde{C}^{1,0}(\Pi) = \lbrace v \in C^1(\Pi), \int_{\Pi} v(x) dx= 0  \rbrace$ and $\phi_{\mu,\epsilon}(v) = - \int_{\Pi} \epsilon \, a\left( v'+ v^2\right) \, d\mu.$

\vspace{\baselineskip}

Remarking that $I(\mu)=\lim\limits_{\epsilon \rightarrow 0}\epsilon\, I_{1}(\mu),$ the limit \(I(\mu)\) can only take the values \(0\) or \(+\infty\) and the following characterization of this limit holds
\begin{equation*}
I(\mu) = \begin{cases}
\;\;\; 0 & \displaystyle if \;\;\; I_1(\mu) < +\infty,\\
+\infty & if \;\;\; I_1(\mu) = +\infty.
\end{cases}
\end{equation*} 

Thus, to analyse the limit $I$, it suffices to study the behaviour of \(I_1\).

\begin{prop}\label{prop:2.3}
    Let $\mu \in P(\Pi)$ be a probability measure possessing an isolated atom. Then
        \begin{equation*}
I_1(\mu) = + \infty.
\end{equation*}
\end{prop}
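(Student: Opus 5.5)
We work with the representation \eqref{5} at $\epsilon=1$:
\[
I_1(\mu)=\sup_{w\in C^2(\Pi)} -\int_\Pi a\,(w''+w'^2)\,d\mu .
\]
The plan is to exhibit a family of test functions $w_n$ for which this quantity blows up. The only freedom needed is to make $w''$ very negative at the atom while keeping the contribution of the rest of $\mu$ under control.

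Let $x_0$ be the isolated atom, with mass $m:=\mu(\{x_0\})>0$. Since the atom is isolated, there is $\eta>0$ such that $\mu\big((x_0-\eta,x_0+\eta)\setminus\{x_0\}\big)=0$. Assume also $a(x_0)>0$; this is the natural standing nondegeneracy, and if $a(x_0)=0$ the atom carries no weight and the argument must be modified.

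The idea is to take $w$ with $w'(x_0)=0$ and $w''(x_0)=-n$, supported in $(x_0-\eta,x_0+\eta)$. Then the integrand vanishes on $\operatorname{supp}\mu$ except at $x_0$, and the functional equals $m\,a(x_0)\,n\to\infty$.

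Concretely:
\begin{enumerate}
\item Fix a smooth bump $\psi\in C^\infty_c(-\eta,\eta)$ with $\psi(y)=-y^2/2$ near $0$.
\item Set $w_n(x)=n\,\psi(x-x_0)$, extended by zero so it is periodic on $\Pi$; this requires $\eta<1/2$. Then $w_n\in C^2(\Pi)$.
\item Outside $(x_0-\eta,x_0+\eta)$ we have $w_n\equiv0$, so $w_n''+w_n'^2=0$ there.
\item Inside $(x_0-\eta,x_0+\eta)$ the measure $\mu$ charges only $x_0$, where $w_n'(x_0)=0$ and $w_n''(x_0)=-n$.
\item Hence
\[
-\int_\Pi a\,(w_n''+w_n'^2)\,d\mu = m\,a(x_0)\,n \longrightarrow +\infty .
\]
\end{enumerate}

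There is essentially no difficulty here. The only point requiring care is the role of isolation: without it, nearby mass of $\mu$ would see the term $w_n'^2\sim n^2$ and the sign could be lost. Isolation removes this entirely. One could also observe that the variant with $u=e^{w}$ is equivalent, since $u_n=e^{w_n}$ is strictly positive and $C^2$.

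If $a$ is only assumed nonnegative, the statement needs $a(x_0)>0$. I would add this as a remark, since in the paper's setting $a=\sigma^2/2$ is implicitly taken positive (the self-adjointness argument uses weight $1/a$).
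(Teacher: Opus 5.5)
Your proof is correct and follows essentially the same route as the paper: a test function supported in the atom-free neighbourhood of $x_0$, with vanishing first derivative and a second derivative of size $-n$ at $x_0$, so that only the term $\mu(\{x_0\})\,a(x_0)\,n$ survives. Writing the test function as $w_n=n\psi(\cdot-x_0)$ instead of the paper's $v_\epsilon=-\sin(x/\epsilon)$ makes the zero-mean constraint on $v=w'$ automatic, so the unspecified patch $r_\epsilon$ is not needed. It also keeps the atom's mass $m$ explicit, which the paper's final inequality (stated for $\delta_0$) leaves implicit.
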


\begin{proof}
    We first suppose that $\mu$ possess an atom at $x_0=0$ and we set $\delta > 0$ small enough such that $\mu_0(B(0,\delta))= \mu_0(0)$ and $\mu_0(B(0,2\delta)\setminus B(0,\delta))= 0$. We define for any $\epsilon > 0$ the periodic function $v_\epsilon$ by
        \[ v_\epsilon(x) = \begin{cases} 
          - sin(x/ \epsilon) & x\in B(0,\delta) \\
          r_\epsilon(x) & x \in B(0,2\delta) \setminus B(0,\delta) \\
          0 & x \in \Pi \setminus B(0,2\delta),
       \end{cases}
    \]
    where $r_\epsilon$ is a well-chosen function that makes the function in $\Tilde{C}^{1,0}(\Pi)$. Then
\begin{align*}
     I_1(\delta_{0}) \ge - \, a(0)\bigl( v_\epsilon^{\prime}(0) + v_\epsilon(0)^2 \bigl)=  a(0)/\epsilon \underset{\epsilon\to 0}{\longrightarrow}\; +\infty
\end{align*}
which ends the proof of the proposition.
\end{proof}

\begin{prop}\label{prop:2.4}
Let  $\mu \in P(\Pi)$ be a probability measure which is absolutely continuous with respect to the Lebesgue measure, with a positive density \(f \in C^{1}(\Pi)\). Then
\[
I_1(\mu) = \frac{1}{4} \int_{\Pi} \frac{\bigl( (a f)' \bigr)^{2}}{a f}\, dx.
\]
\end{prop}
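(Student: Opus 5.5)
We work with the representation \eqref{5} at $\epsilon=1$, where $d\mu=f\,dx$:
\[
I_1(\mu)=\sup_{v\in\Tilde C^{1,0}(\Pi)}\; -\int_\Pi a f\,(v'+v^2)\,dx .
\]
Set $g:=af$, which is positive and $C^1$ on $\Pi$ because $a\in C^2$ and $f\in C^1$ are positive (we take $a>0$, as required for the Rayleigh--Ritz argument in Proposition~\ref{prop:2.1}). Since $g$ is periodic and $C^1$, we integrate the first term by parts with no boundary terms:
\[
-\int_\Pi g v' \,dx = \int_\Pi g' v\,dx .
\]
The functional therefore becomes the concave quadratic
\[
\Phi(v)=\int_\Pi \bigl(g' v - g v^2\bigr)\,dx .
\]

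We complete the square pointwise:
\[
g'v-gv^2=-g\Bigl(v-\frac{g'}{2g}\Bigr)^2+\frac{(g')^2}{4g}.
\]
This gives the upper bound $\Phi(v)\le \frac14\int_\Pi (g')^2/g\,dx$ for every admissible $v$. For the matching lower bound, the pointwise maximizer is $v^*=g'/(2g)=\frac12(\log g)'$. This is the logarithmic derivative of a periodic function, so $\int_\Pi v^*\,dx=0$ and the zero-mean constraint holds automatically. This is expected: in the original variables it corresponds to $w=\frac12\log g$, i.e.\ $u=\sqrt{af}$. Evaluating $\Phi(v^*)$ gives exactly $\frac14\int_\Pi ((af)')^2/(af)\,dx$, and the two bounds together prove the claim.

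The main obstacle is regularity. Since $f$ is only $C^1$, $v^*$ is only $C^0$, not $C^1$, so it does not belong to $\Tilde C^{1,0}(\Pi)$; equivalently, $u=\sqrt{af}$ is $C^1$ but not $C^2$. To handle this, we mollify $\log g$ on the circle: take $h_n\in C^\infty(\Pi)$ with $h_n\to \log g$ in $C^1$, and set $v_n=\frac12 h_n'$. Each $v_n$ is smooth with zero mean, and $v_n\to v^*$ uniformly. Because $\Phi$ in its integrated-by-parts form only involves $v$ and not $v'$, it is continuous under uniform convergence, so $\Phi(v_n)\to\Phi(v^*)$. This yields the lower bound as a supremum and completes the proof.
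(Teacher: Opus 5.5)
Your proof is correct and follows the same route as the paper. The paper also takes $v_0=\frac{(af)'}{2af}=\frac12(\ln(af))'$ as the test function and gets the matching upper bound by integrating by parts and applying Young's inequality, which is exactly your completion of the square. Your mollification step is a genuine improvement: the paper asserts $v_0\in\Tilde{C}^{1,0}(\Pi)$, which fails when $f$ is only $C^1$ because $v_0$ is then merely continuous. Your $C^1$-approximation of $\log(af)$, combined with the continuity of the integrated-by-parts functional under uniform convergence, closes exactly that gap.
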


\begin{proof}
We set 
\begin{equation*}
     v_0 = \frac{(af)'}{2af} = \frac{1}{2}\ln(af)' \in \Tilde{C}^{1,0}(\Pi).
\end{equation*}
Then, computing we get

\begin{equation*}
    \phi_{\mu,1}(v_0) = \int_\Pi a \, v_0^2 d\mu = \frac{1}{4} \int_{\Pi} \frac{\bigl((a f)'\bigr)^{2}}{a f}\, dx.
\end{equation*}

Moreover, one can show that the supremum is attained at this point, as for any $v \in \Tilde{C}^{1,0}(\Pi)$, the Young Inequality:
\begin{equation*}
    -\int_\Pi av' \, f\, dx= \int_\Pi (af)'v \, dx= \int_\Pi \frac{(af)'}{\sqrt{2af}}\, \times \sqrt{2af} \;v \, dx \leq \frac{1}{2} \int_\Pi \frac{(({af})')^2}{2af}\, dx + \frac{1}{2} \int_\Pi 2 \,a v^2 \, f\, dx
\end{equation*}
gives us
\begin{equation*}
    -\int_\Pi av' +av^2 \, d\mu \leq \frac{1}{4} \int_\Pi \frac{{((af)')}^2}{af} \, dx,
\end{equation*}
which ends the proof.
\end{proof}

\begin{rem}\label{rem:2.5}
    This result is a particular case of the one obtained in \cite{DonskerVaradhan1975} ((1.4), page 3).
\end{rem}

The next proposition gives a partial result for some continuous singular measures and more general ones, exploiting the notion of local dimension of a measure. 

\begin{defn}\label{defn:2.6}
We define the left and right lower and upper local dimensions {\color{red} of a measure $\mu \in P(\Pi)$ at point $x_0 \in \Pi$} by

\[
\dim_{\mathrm{loc}}^{-,l}(\mu, x_0)
    := \liminf_{r \to 0} \frac{\log \mu\bigl([x_0 - r,\,x_0]\bigr)}{\log r},
    \,\,\,
\dim_{\mathrm{loc}}^{+,l}(\mu, x_0)
    := \limsup_{r \to 0} \frac{\log \mu\bigl([x_0 - r,\,x_0]\bigr)}{\log r},
\]
and
\[
\dim_{\mathrm{loc}}^{-,r}(\mu, x_0)
    := \liminf_{r \to 0} \frac{\log \mu\bigl([x_0,\,x_0 + r]\bigr)}{\log r},
    \,
\dim_{\mathrm{loc}}^{+,r}(\mu, x_0)
    := \limsup_{r \to 0} \frac{\log \mu\bigl([x_0,\,x_0 + r]\bigr)}{\log r}.
\]

If the lower and the upper limits coincide, we say that $\mu$ has a \emph{left} or a \emph{right} local dimension at $x_0$, and we denote this common value by $\dim_{\mathrm{loc}}^{l}(\mu, x_0)$ and $\dim_{\mathrm{loc}}^{r}(\mu, x_0)$. Moreover, if these two limits exist and coincide, we say that 
$\mu$ has the local dimension at $x_0$, given by $\dim_{\mathrm{loc}}(\mu,x_0)=\dim_{\mathrm{loc}}^{l}(\mu, x_0) = \dim_{\mathrm{loc}}^{r}(\mu, x_0)$. Finally, we also adopt the convention that if in the numerator 
$\mu(\cdot)$ appearing in any of the above definitions is identically equal to $0$ for all sufficiently small $r$, then the corresponding local dimension is defined to be $-\infty$.

\end{defn}

\begin{lem}\label{lem:2.7}
Let $\mu \in P(\Pi)$ be a probability measure that has no atom at the point $x_0$. Then
    \begin{align*}
I_1(\mu) &= \sup\limits_{v \in \Tilde{C}^{1,0}_{p, x_0}(\Pi)}\phi_{\mu,1}(v) ,
\end{align*}
where $\Tilde{C}^{1,0}_{p, x_0}(\Pi)$ denotes the set of functions that are in $C^0(\Pi)\cap C^1(\Pi \setminus \lbrace x_0\rbrace)$ and having zero mean over $\Pi$.
\end{lem}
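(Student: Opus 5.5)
The plan is to prove the two inequalities separately. Write $\Tilde{C}^{1,0}_{p,x_0}(\Pi)$ for the enlarged class. For $v$ in this class, $v'$ exists everywhere except at $x_0$. Since $\mu(\{x_0\})=0$, the quantity $\phi_{\mu,1}(v) = -\int_\Pi a\,(v'+v^2)\,d\mu$ is well defined, because the exceptional point is $\mu$-negligible. I will work with those $v$ whose derivative stays bounded near $x_0$ (for instance, $v'$ has one-sided limits at $x_0$). This is the natural reading of ``$C^1(\Pi\setminus\{x_0\})$'' on the compact circle.

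\emph{Easy inequality.} $\Tilde{C}^{1,0}(\Pi)\subset \Tilde{C}^{1,0}_{p,x_0}(\Pi)$, so \eqref{5} immediately gives $I_1(\mu)\le \sup_{\Tilde{C}^{1,0}_{p,x_0}}\phi_{\mu,1}$.

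\emph{Reverse inequality.} Fix $v$ in the enlarged class and set $M:=\sup_{x\neq x_0}|v'(x)|<\infty$. For small $\eta>0$ I construct $v_\eta\in C^1(\Pi)$ as follows.
\begin{itemize}
\item Outside $(x_0-\eta,x_0+\eta)$, set $v_\eta=v$.
\item Inside, set $v_\eta(x)=v(x_0-\eta)+\int_{x_0-\eta}^x g_\eta$.
\item Choose $g_\eta$ continuous with $g_\eta(x_0\pm\eta)=v'(x_0\pm\eta)$, which makes $v_\eta$ of class $C^1$ at the junctions.
\item Require also $\int_{x_0-\eta}^{x_0+\eta} g_\eta = v(x_0+\eta)-v(x_0-\eta)$, so that $v_\eta$ is continuous at $x_0+\eta$.
\end{itemize}
Concretely, take $g_\eta$ to be the linear interpolation of the two endpoint slopes, plus $\kappa_\eta\,\psi\big((x-x_0)/\eta\big)$. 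Here $\psi$ is a fixed continuous bump on $(-1,1)$ with integral $1$ that vanishes at $\pm1$. The constant $\kappa_\eta$ is chosen to fix the integral. Since the required correction to the integral is at most $2M\eta$ while $\int\psi(\cdot/\eta)=\eta$, we get $|\kappa_\eta|\le 2M$. Hence $\|g_\eta\|_\infty\le C M$ with $C$ independent of $\eta$.

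\emph{Consequences of the construction.}
\begin{itemize}
\item $\|v_\eta - v\|_\infty\le 2(C+1)M\eta\to 0$.
\item $v_\eta'=v'$ off $(x_0-\eta,x_0+\eta)$, and $|v_\eta'|\le CM$ everywhere.
\end{itemize}
To restore the zero-mean constraint, put $\tilde v_\eta := v_\eta - m_\eta$ with $m_\eta:=\int_\Pi v_\eta\,dx = \int_\Pi (v_\eta-v)\,dx\to 0$. Then $\tilde v_\eta\in\Tilde{C}^{1,0}(\Pi)$, $\tilde v_\eta'=v_\eta'$, and $\tilde v_\eta\to v$ uniformly.

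\emph{Estimating the difference.} We have
\[
|\phi_{\mu,1}(\tilde v_\eta)-\phi_{\mu,1}(v)| \le \|a\|_\infty\Big( (C+1)M\,\mu\big((x_0-\eta,x_0+\eta)\big) + \|\tilde v_\eta^2-v^2\|_\infty\Big).
\]
The second term tends to $0$ by uniform convergence. The first term tends to $0$ because $\mu((x_0-\eta,x_0+\eta))\downarrow\mu(\{x_0\})=0$, and this is exactly where the no-atom hypothesis is used. Hence $I_1(\mu)\ge \limsup_\eta \phi_{\mu,1}(\tilde v_\eta)=\phi_{\mu,1}(v)$, and taking the supremum over $v$ concludes the proof.

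\emph{Main obstacle.} The delicate step is the uniform bound on $v_\eta'$ inside the smoothing window, since it is what makes the $\mu$-mass of the window control the error. If one wants to allow $v'$ unbounded near $x_0$ (with $a v'\in L^1(\mu)$), I would first approximate $v$ in the enlarged class by functions of the same class with bounded derivative. To do so, I would cap $v'$ at level $\pm N$ on a shrinking neighbourhood of $x_0$, restore continuity of $v$ by a small linear correction, and then let $N\to\infty$ using dominated convergence for $\int a v'\,d\mu$. After that, the construction above applies.
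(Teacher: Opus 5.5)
Your proof is correct, and it follows a different, more careful route than the paper's. The paper mollifies the derivative globally. It sets $v_\delta(x)=\int_0^x (v'*\omega_\delta)(s)\,ds$, recenters it, and asserts $\sup_\Pi\bigl(|\tilde v_\delta-v|+|\tilde v_\delta'-v'|\bigr)\to 0$. The derivative part of that claim fails whenever $v'$ jumps at $x_0$, which is exactly the case of the piecewise linear test functions in Proposition~\ref{prop:2.8}. A uniform limit of the continuous functions $v'*\omega_\delta$ would give a continuous extension of $v'$ across $x_0$. Accordingly, the paper's argument never uses $\mu(\{x_0\})=0$.

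Your local surgery in the window $(x_0-\eta,x_0+\eta)$ keeps the slope bound $|v_\eta'|\le CM$ uniform in $\eta$. This confines the derivative error to a set of $\mu$-mass $\mu((x_0-\eta,x_0+\eta))\downarrow\mu(\{x_0\})=0$, which is exactly where the no-atom hypothesis is needed. The paper's mollification can be repaired the same way. Since $|v'*\omega_\delta|\le M$ and $v'*\omega_\delta\to v'$ pointwise off $x_0$, dominated convergence with $\mu(\{x_0\})=0$ gives $\phi_{\mu,1}(\tilde v_\delta)\to\phi_{\mu,1}(v)$. That version is shorter; yours gives an explicit quantitative error bound.

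Two minor points.
\begin{enumerate}
\item The correction to the integral is bounded by $4M\eta$, not $2M\eta$. Both $|v(x_0+\eta)-v(x_0-\eta)|$ and the integral of the linear interpolant of the endpoint slopes can be of size $2M\eta$. This only changes the constant $C$.
\item Your sketch for unbounded $v'$ would also need $v'\in L^1(dx)$ for the truncation-and-correction step. This does not follow from $v\in C^0(\Pi)\cap C^1(\Pi\setminus\{x_0\})$; think of $x\sin(1/x)$ near $0$. Restricting to bounded derivatives, as you do, is also what the paper's mollification tacitly requires.
\end{enumerate}
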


\begin{proof}
    First of all it is clear that $\Tilde{C}^{1,0}(\Pi) \subset \Tilde{C}^{1,0}_{p, x_0}(\Pi)$, thus 
        \begin{align*}
I_1(\mu) \leq \sup\limits_{v \in \Tilde{C}^{1,0}_{p, x_0}(\Pi)}\phi_{\mu,1}(v) := \Tilde{I_1}(\mu).
\end{align*}
    
We now prove the reverse inequality. For any $\epsilon$, let us fix $v_\epsilon \in \Tilde{C}^{1,0}_{p, x_0}(\Pi)$ such that $ \Tilde{I_1}(\mu)  - \epsilon < \phi_{\mu,1}(v_\epsilon)$. Our goal is to show that for any $\epsilon$, there exists a sequence $v_n$ of function in $\Tilde{C}^{1,0}(\Pi)$ such that $\lim_{n \rightarrow +\infty} \phi_{\mu,1}(v_n) = \phi_{\mu,1}(v_\epsilon)$. 
    
\vspace{\baselineskip}

Without loss of generality, we suppose that $x_0 = 0$. We set for any fixed $\epsilon$

\begin{equation*}
    \Tilde{v}_\delta(x) = v_\delta(x) - \int_\Pi v_\delta(s) \, ds \,\,\, \text{where} \,\,\, v_\delta(x) := \int_0^x (v_\epsilon' * \omega_\delta)(s) \, ds,
\end{equation*}
and $(\omega_\delta)_\delta$ denotes a family of nonnegative $C^1$ functions with compact support contained in $B(0,\delta)$, normalized to have unit mass. By construction, it is clear that $\Tilde{v}_\delta \in \Tilde{C}^{1,0}(\Pi)$ and that 
\begin{equation*}
\sup_{\Pi} \Bigl(
\lvert \tilde{v}_\delta - v_\varepsilon \rvert
+ \lvert \tilde{v}_\delta' - v_\varepsilon' \rvert
\Bigr)
\leq
\sup_{\Pi} \Bigl(
\lvert v_\delta - v_\varepsilon \rvert
+ \lvert v_\varepsilon' * \omega_\delta - v_\varepsilon' \rvert
\Bigr)
+ \left| \int_{\Pi} v_\delta(s)\,ds \right|
\;\xrightarrow[\delta \to 0]{}\; 0,
\end{equation*}
from which the result follows.
\end{proof}

\begin{prop}\label{prop:2.8}
    Let $\mu \in P(\Pi)$ be a probability measure such that there exist two points \( x_0 \) and \( y_0 \) for which
\begin{equation*}
    \dim_{\text{loc}}^{-,l}(\mu, x_0) := \alpha^-_l, \,\dim_{\text{loc}}^{-,r}(\mu, y_0) := \beta^-_r, \,
    \dim_{\text{loc}}^{+,r}(\mu, x_0) := \alpha^+_r, \,\dim_{\text{loc}}^{+,l}(\mu, y_0) := \beta^+_l \,\,
\end{equation*}
with
    \[
\min(\alpha_l^{-},\,\beta_r^{-}) \;<\; \min(\beta_l^{+},\,\alpha_r^{+})
\,\,\,\text{and}\,\,\,
\min(\alpha_l^{-},\,\beta_r^{-}) \;<\; 1.
\]

    Then $I_1(\mu) = + \infty$.
\end{prop}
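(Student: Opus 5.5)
The plan is to use Lemma \ref{lem:2.7} and exhibit explicit continuous, piecewise-$C^1$, zero-mean test functions $v$ for which $\phi_{\mu,1}(v)=-\int_\Pi a\,(v'+v^2)\,d\mu$ is arbitrarily large. By the symmetry $x\mapsto -x$, which exchanges left and right local dimensions and turns the point $y_0$ into the role of $x_0$, I will assume that $\min(\alpha_l^-,\beta_r^-)=\alpha_l^-=:\alpha<1$. I also take $x_0=0$. If $\mu$ has an atom at $x_0$, I would either remove the corner by the mollification argument of Lemma \ref{lem:2.7} performed away from the atom, or refer to Proposition \ref{prop:2.3}. So I assume for now that there is no atom at $0$, which lets me apply Lemma \ref{lem:2.7} with corners at $0$ and at $\pm r,\pm s$.

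\emph{Construction.} Fix $h>0$, a small $r>0$ and a small $s>0$. Let $w$ be the tent that vanishes outside $[-r,s]$ and is linear on $[-r,0]$, with slope $-h/r$, down to the value $-h$ at $0$. It is then linear on $[0,s]$, with slope $h/s$, back up to $0$. Set $v=w-m$, where $m=\int_\Pi w=-h(r+s)/2$, so that $v$ has zero mean. Then $v'=w'$, and $|v|\le 2h$ on $\Pi$. Writing $a_{\max}=\max a$ and $a_{\min}=\min a>0$ (strict positivity of $a$ on $\Pi$, i.e. nondegenerate noise, is an assumption I am adding here, needed to use $a_{\min}$ below), I get
\begin{equation*}
\phi_{\mu,1}(v)\;\ge\; a_{\min}\,\frac{h}{r}\,\mu([-r,0])\;-\;a_{\max}\,\frac{h}{s}\,\mu([0,s])\;-\;4a_{\max}h^2 .
\end{equation*}
The first term is the gain from the steep descent on the left of $x_0$. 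The second is the loss from the ascent on the right. The last term is the quadratic penalty.

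\emph{Choice of parameters.} By definition of the lower local dimension $\alpha=\alpha_l^-$, for every $\eta>0$ there is a sequence $r_k\to0$ with $\mu([-r_k,0])\ge r_k^{\alpha+\eta}$. By definition of $\alpha_r^+$, there is a sequence $s_j\to 0$ with $\mu([0,s_j])\le s_j^{\alpha_r^+-\eta}$. The hypothesis $\alpha<\alpha_r^+$, i.e. $\alpha_l^-<\alpha_r^+$, makes it possible to pick the right-hand ascent length $s$ so that the loss term $h\,\mu([0,s])/s$ is small compared with the gain. In fact, since the gain blows up anyway, it even suffices to fix one admissible $s=s_{j_0}$ and keep it fixed. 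Then I fix $h=1$ and let $r=r_k\to0$ with $\eta<1-\alpha$. The gain is at least $a_{\min} r_k^{\alpha+\eta-1}\to+\infty$, while the two negative terms stay bounded. Hence $\phi_{\mu,1}(v)\to+\infty$, and Lemma \ref{lem:2.7} gives $I_1(\mu)=+\infty$.

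\emph{Main obstacle.} The delicate step is the justification that these piecewise-linear functions are admissible. Lemma \ref{lem:2.7} allows only one nonsmooth point $x_0$, whereas $w$ also has corners at $-r$ and $s$. I would handle this by a direct smoothing near those corners. Mollifying $w'$ there at scale $\delta\ll r$ changes $\phi_{\mu,1}$ by at most a factor of order $(h/r)\,\mu$ of the $\delta$-neighbourhoods of the corners. I can make this small by choosing $r$ and $s$ in the sequences and then adjusting them slightly, so that $\mu$ charges no neighbourhood of the corners too heavily. This works because $\mu$ has at most countably many atoms and $\mu(\{\pm r\})=0$ for all but countably many $r$. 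The other point requiring care is the sign bookkeeping of $\phi_{\mu,1}$: with the convention $\phi_{\mu,1}(v)=-\int a(v'+v^2)\,d\mu$, the mass must sit where $v'$ is very negative. This is why the descent is placed on the side of $x_0$ where the lower dimension is small.
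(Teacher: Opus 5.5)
Your argument is correct, under the tacit assumption (which the paper also makes) that the relevant dimension is finite. It takes a genuinely different route from the paper.

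\textbf{How the two constructions differ.} The paper places a symmetric inverted tent $f(x/\varepsilon)$ of width $\varepsilon$ at $x_0$ and restores zero mean with the opposite tent at $y_0$. The gain (left half at $x_0$, right half at $y_0$) and the loss (the two other halves) then carry the same factor $|A|/\varepsilon$. So the paper must show that the gain exponent beats the loss exponent, and this is exactly where $y_0$ and the hypothesis $\min(\alpha_l^-,\beta_r^-)<\min(\beta_l^+,\alpha_r^+)$ enter.

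You let only the descent leg shrink and keep the ascent leg at fixed length $s$, so the loss is at most $a_{\max}h/s=O(1)$. You obtain zero mean by subtracting a constant, at cost $O(h^2)$. Consequently you only use $\alpha_l^-<1$ (or $\beta_r^-<1$ after reflection). The comparison hypothesis and the second point are superfluous.

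\textbf{What your route buys.} Your estimate in fact gives $I_1(\mu)=+\infty$ whenever $\limsup_{r\to0}\mu([x_0-r,x_0])/r=+\infty$ at a single point. This already contains Corollary~\ref{cor:2.9} and items (1)--(2) of Corollary~\ref{cor:2.11} for finite exponents. It is also more robust. Reading the paper's $\mu([0,\varepsilon/2])\sim C_2^+(\varepsilon/2)^{\alpha_r^+}$ as an upper bound on the loss requires the scales where the left mass is large (from the liminf $\alpha_l^-$) to be scales where the right mass is small. The limsup $\alpha_r^+$ only provides such scales along some sequence, and the hypotheses do not guarantee that the two sequences match. Your fixed ascent leg needs no such matching. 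The paper's single-scale symmetric template is mainly what it reuses in Corollaries~\ref{cor:2.9} and~\ref{cor:2.11}; it buys nothing extra here.

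\textbf{Repair 1: infinite dimensions.} Under the convention of Definition~\ref{defn:2.6}, a dimension equals $-\infty$ when $\mu$ vanishes on that side, and then your sequence $r_k$ does not exist. You should pick whichever of $\alpha_l^-,\beta_r^-$ is finite and below $1$. If neither is, the statement itself fails. For example, take $af=g^2/Z$ with $g\ge0$ Lipschitz, $g=0$ on $[x_0-\delta,x_0]$, $g(x)=x-x_0$ just to the right of $x_0$, and $g>0$ elsewhere. Take $y_0$ a point where $f$ is continuous and positive. Then $\alpha_l^-=-\infty$, $\alpha_r^+=3$ and $\beta_r^-=\beta_l^+=1$, so the hypotheses hold. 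Yet integration by parts gives $I_1(\mu)\le\frac1Z\int_\Pi(g')^2\,dx<\infty$. This is a defect of the statement, not of your method.

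\textbf{Repair 2: atoms at $x_0$.} Proposition~\ref{prop:2.3} only covers isolated atoms, so citing it does not handle a general atom at $x_0$. Instead, move the bottom of the tent to a small $\rho>0$ so that $x_0$ lies inside the descent leg, and choose the corners $-r,\rho,\rho+s$ among non-atoms. The gain is then at least $a_{\min}h\,\mu(\{x_0\})/(r+\rho)$, which tends to $+\infty$.
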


\begin{proof}
    First of all, without loss of generality, we suppose that $x_0 = 0$. Our goal is to construct a good sequence of test functions that makes the supremum diverge. To construct such test functions, let us pose for any negative number $A$
    \begin{align*}
    f(x) =
\begin{cases}
A(x+\frac{1}{2}) & \text{if } x \in [-\frac{1}{2}, 0], \\
A(\frac{1}{2} - x) & \text{if } x \in (0, \frac{1}{2}], \\
0 & \text{otherwise }
\end{cases}
\end{align*}
and $\epsilon > 0$ small enough
\begin{equation*}
v_\varepsilon(x) =
\begin{cases}
\displaystyle f\left( \frac{x}{\varepsilon} \right) & \text{if } x \in [-\varepsilon, \varepsilon], \\[10pt]
\displaystyle -f\left( \frac{x - y_0}{\varepsilon} \right) & \text{if } x \in [y_0-\varepsilon, y_0 + \varepsilon], \\[10pt]
0 & \text{otherwise}.
\end{cases}
\end{equation*}

It is clear that $v_\varepsilon$ is a periodic function in $\Tilde{C}^{1,0}_{p, x_0}(\Pi)$ as a piecewise continuous function and is of zero integral as the $f$ and $-f$ compensate each other on $[-\epsilon, \epsilon]$ and $[y_0- \epsilon, y_0 + \varepsilon]$. Using Lemma \ref{lem:2.7}, \(v_\varepsilon\) can be used as an admissible test function in the supremum, even though $v_\varepsilon\notin \Tilde{C}^{1,0}(\Pi)$. Thus for $\epsilon$ small enough we get

\begin{align*}
I_1(\mu) &\ge - \int_{-\epsilon}^\varepsilon a(x) \bigg( \frac{1}{\epsilon}
f'\left( \frac{x}{\varepsilon} \right)
+  f\left( \frac{x}{\varepsilon} \right)^2\bigg)
 d\mu(x) \\
 &- \int_{y_0- \epsilon}^{y_0+ \epsilon} a(x) \bigg(\frac{-1}{\epsilon}
f'\left( \frac{x-y_0}{\varepsilon} \right)
+  f\left( \frac{x-y_0}{\varepsilon} \right)^2\bigg)
 d\mu(x) \\
 &\sim -\int_{-\epsilon}^\varepsilon \frac{1}{\epsilon} \, a(x) \,
f'\left( \frac{x}{\varepsilon} \right) d\mu(x) + \int_{y_0- \epsilon}^{y_0+ \epsilon} \frac{1}{\epsilon} \, a(x) \,
f'\left( \frac{x- y_0}{\varepsilon} \right) d\mu(x)\\
& = \frac{A}{\varepsilon}\Bigg(
-\int_{-\epsilon/2}^{0} a(x)\, d\mu(x)
+\int_{y_0-\epsilon/2}^{y_0} a(x)\, d\mu(x)
+
\int_{0}^{\epsilon/2} a(x)\, d\mu(x)
\;-\;
\int_{y_0}^{y_0+\epsilon/2} a(x)\, d\mu(x)
\Bigg)\\
& \ge \frac{A}{\varepsilon}\Bigg(
\max a \Big(
\mu\bigl([y_0-\epsilon/2,y_0]\bigr)
+ \,\mu\bigl([0,\epsilon/2]\bigr)
\Big) - \min a \Big(
\mu\bigl([-\epsilon/2,0]\bigr)
+ \,\mu\bigl([y_0,y_0+\epsilon/2]\bigr)
\Big)
\Bigg)\\
& \sim \frac{A}{\varepsilon}\Bigg(
\max a \Big(
C_1^+(\epsilon/2)^{\beta^+_l}
+ C_2^+\,(\epsilon/2)^{\alpha^+_r}
\Big) - \min a \Big(
C_3^+(\epsilon/2)^{\alpha^-_l}
+ C_4^+\,(\epsilon/2)^{\beta^-_r}
\Big)
\Bigg) \\
& \sim - \, \frac{A}{\varepsilon} \min a \Big(
C_3^+ (\epsilon/2)^{\alpha^-_l}
+ C_4^+\,(\epsilon/2)^{\beta^-_r}
\Big).
\end{align*}

The result follows from the second assumption by taking the limit as $\ epsilon \ to 0$.  \end{proof}

\begin{cor}\label{cor:2.9}

     Let $\mu \in P(\Pi)$ be a probability measure such that there exist two points \( x_0 \) and \( y_0 \) for which
    \begin{equation*}
        \dim_{\text{loc}}(\mu, x_0) = \alpha < 1 \,\,\, \text{and} \,\,\, \alpha < \beta = \dim_{\text{loc}}(\mu, y_0).
    \end{equation*}
    
     Then $I_1(\mu) = + \infty$.
\end{cor}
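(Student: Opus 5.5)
The corollary follows by applying Proposition \ref{prop:2.8} with the same two points $x_0$ and $y_0$; the only work is to translate the existence of the local dimensions into the four one-sided quantities used there.

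\textbf{Step 1: from two-sided to one-sided dimensions.} Since $\dim_{\mathrm{loc}}(\mu,x_0)=\alpha$ exists, both one-sided dimensions exist and equal $\alpha$. The lower and upper limits therefore coincide on each side, so
\[
\alpha_l^- = \alpha_r^+ = \alpha, \qquad \beta_r^- = \beta_l^+ = \beta .
\]

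\textbf{Step 2: check the hypotheses of Proposition \ref{prop:2.8}.} We need
\[
\min(\alpha_l^-,\beta_r^-) < \min(\beta_l^+,\alpha_r^+) \quad\text{and}\quad \min(\alpha_l^-,\beta_r^-) < 1 .
\]
Substituting the values from Step 1 turns these into
\[
\min(\alpha,\beta) < \min(\beta,\alpha) \quad\text{and}\quad \min(\alpha,\beta) < 1 .
\]
The second inequality holds: since $\alpha<\beta$, we have $\min(\alpha,\beta)=\alpha<1$.

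The first inequality is the obstacle. Read literally, it is the strict inequality $\alpha<\alpha$, which is false. The way through is to look at what the proof of Proposition \ref{prop:2.8} actually uses. There, the positive terms $\mu([y_0-\epsilon/2,y_0])$ and $\mu([0,\epsilon/2])$ must be negligible compared with the negative term $\mu([-\epsilon/2,0])$. The left mass near $y_0$ is at most of order $\epsilon^{\beta-\eta}$, while the left mass near $x_0$ is at least of order $\epsilon^{\alpha+\eta}$, so this part is fine because $\beta>\alpha$. The dangerous term is $\mu([0,\epsilon/2])$, which has the same exponent $\alpha$ as the negative term.

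\textbf{Step 3: build the test function so that only one side of $x_0$ matters.} To handle the dangerous term, I would redo the construction in the proof of Proposition \ref{prop:2.8}, choosing the profile so that its derivative is negative only on the left of $x_0$ and on the right of $y_0$. On $[x_0,x_0+\epsilon]$ it returns to $0$ over a slowly varying region of length $\epsilon^{\gamma}$ with $\gamma<1$. That region contributes only $O(\epsilon^{-\gamma}\,\mu([0,\epsilon^\gamma]))$, which is $O(\epsilon^{\gamma(\alpha-\eta)-\gamma})$. It must be compared with the main term, of order $\epsilon^{\alpha+\eta-1}$. The condition $\gamma(1-\alpha)<1-\alpha$ holds for every $\gamma<1$, so after choosing $\eta$ small the main term dominates. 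The contribution at $y_0$ is handled symmetrically, and there $\beta>\alpha$ gives further room.

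\textbf{Step 4: conclude.} With this construction, $\phi_{\mu,1}(v_\epsilon)\gtrsim -A\,\min a\,\epsilon^{\alpha+\eta-1}$, where $A<0$. Since $\alpha<1$, this tends to $+\infty$, so $I_1(\mu)=+\infty$. The test function is admissible by Lemma \ref{lem:2.7}, because $\alpha<1$ rules out an atom at $x_0$: an atom would force $\dim_{\mathrm{loc}}(\mu,x_0)=0$, and that case is covered by Proposition \ref{prop:2.3} anyway. I expect the verification of the exponent bookkeeping in Step 3 to be the main technical point.
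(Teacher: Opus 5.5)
Your Step 2 diagnosis is correct, and the paper reacts to it the same way you do. It replaces the symmetric tent by an asymmetric profile $f_\delta$, with a steep descent on $[x_0-\epsilon\delta,x_0]$ and a gentle return on $[x_0,x_0+\epsilon(1-\delta)]$. It then concludes from $K(\alpha,\delta)=\min a\,C_3^+\delta^\alpha-\max a\,C_2^+\delta(1-\delta)^{\alpha-1}>0$ for a small \emph{fixed} $\delta$, i.e.\ from $\delta^\alpha\gg\delta$ when $\alpha<1$.

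The real difference is the scaling. With $\delta$ fixed, the paper must replace $\mu([x_0-r,x_0])$ and $\mu([x_0,x_0+r])$ by $C r^\alpha$. That is more than the existence of $\dim_{\mathrm{loc}}(\mu,x_0)$ gives, which is only $r^{\alpha+\eta}\le\mu(\cdot)\le r^{\alpha-\eta}$ for small $r$. For instance, if $\mu([x_0-r,x_0])=r^\alpha$ and $\mu([x_0,x_0+r])=r^\alpha\log(1/r)$ for small $r$, the paper's test functions give $\phi_{\mu,1}(v_\epsilon)\to-\infty$ for every fixed $\delta$.

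Your lengths $\epsilon$ and $\epsilon^\gamma$ amount to taking $\delta=\epsilon^{1-\gamma}\to0$. (Note that the return region is $[x_0,x_0+\epsilon^\gamma]$, not a subset of $[x_0,x_0+\epsilon]$ as you wrote.) The resulting gain $\epsilon^{(1-\gamma)(1-\alpha)}$ absorbs the $\epsilon^{\pm\eta}$ slack. So your argument uses the same mechanism as the paper but is actually valid under the stated hypothesis. Your bookkeeping is right:
\begin{itemize}
\item the main term is $\gtrsim\epsilon^{\alpha+\eta-1}$;
\item the slow return is $\lesssim\epsilon^{\gamma(\alpha-\eta-1)}$;
\item the steep side at $y_0$ is $\lesssim\epsilon^{\beta-\eta-1}$, which is fine once $\eta<(\beta-\alpha)/2$;
\item the $v^2$ term is $O(A^2)$;
\item the negated bump at $y_0$ restores zero mean.
\end{itemize}

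One claim in Step 4 is false: $\alpha<1$ does not exclude an atom at $x_0$, since an atom forces $\alpha=0$. Moreover, Proposition~\ref{prop:2.3} only treats \emph{isolated} atoms, so a measure such as $\tfrac12\delta_{x_0}+\tfrac12\,dx$ is not covered by your fallback. In that case your corner sits exactly on the atom, where Lemma~\ref{lem:2.7} does not apply.

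The repair is easy. Put the corner at a non-atom point $p_\epsilon\in(x_0,x_0+\epsilon^2)$. Then $x_0$ lies in the steep descending part, so the atom only adds a positive term of order $\mu(\{x_0\})/\epsilon$, and the return region stays inside $[x_0,x_0+2\epsilon^\gamma]$. Keep the other corners off the countably many atoms by the choice of $\epsilon$, or round them off. There is no atom at $y_0$, since $\beta>\alpha\ge 0$.

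Finally, like the paper, you tacitly need $\alpha$ to be finite. With the $-\infty$ convention of Definition~\ref{defn:2.6}, the statement fails for $\mu=h^2\,dx$ with $h$ smooth, not identically zero, and vanishing near $x_0$. For such $\mu$, $I_1(\mu)<\infty$ by the same Young-inequality bound as in Proposition~\ref{prop:2.4}, because $((ah^2)')^2/(ah^2)=(a'h+2ah')^2/a$ is bounded.
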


\begin{proof}
   We consider the same test function $v_\varepsilon$ as previously, except that we now replace in its construction $f$ by
        \begin{align*}
    f_\delta(x) =
\begin{cases}
A(x+\delta) & \text{if } x \in [-\delta, 0], \\
A\delta - \dfrac{A\delta}{1 - \delta} \, x & \text{if } x \in (0, 1-\delta], \\
0 & \text{otherwise},
\end{cases}
\end{align*}
with $ 0< \delta < 1$. The same calculation gives us 
\begin{align*}
    &I_1(\mu) \gtrsim \frac{A}{\varepsilon}\Bigg(
\max a \Big( C_1^+
(\varepsilon\delta)^{\beta}
+ \frac{C_2^+ \, \delta}{1-\delta}\,(\varepsilon(1-\delta))^{\alpha}
\Big) - \min a \Big(
C_3^+(\epsilon \delta)^{\alpha}
+ \frac{C_4^+ \, \delta}{1-\delta}\,(\varepsilon(1-\delta))^{\beta}
\Big)
\Bigg) \\
&= -\, \frac{ A \, \epsilon^{\alpha}}{\varepsilon} \left(  \, \min a \, C_3^+\delta^\alpha - \max a \, \frac{C_2^+ \, \delta (1-\delta)^\alpha}{1 - \delta} \right) +\, \frac{ A \, \epsilon^{\beta}}{\varepsilon} \left(  \, \max a \,C_1^+\delta^{\beta} - \min a  \,\frac{C_4^+ \, \delta (1-\delta)^\beta}{1 - \delta} \right)\\
& \sim -\, \frac{ A \, \epsilon^{\alpha}}{\varepsilon} \left(  \, \min a \, C_3^+\delta^\alpha - \max a \, \frac{C_2^+ \, \delta (1-\delta)^\alpha}{1 - \delta} \right):= -\, \frac{ A \, \epsilon^{\alpha}}{\varepsilon} K(\alpha,\delta).
\end{align*}

Finally, as $\min a > 0$, one can prove that there exists $\delta > 0$, small enough such that $K(\alpha,\delta) > 0$. Passing to the limit as $\varepsilon \to 0$, we obtain the desired result.
\end{proof}

\begin{rem}\label{rem:2.10}
    Let us consider a measure $\mu = f dx$ with  $f \in L^1(\Pi)$ and such that, for $\epsilon$ small enough
    \begin{equation*}
f(x) = O(|x|^{-\beta})\quad \text{on} \,\,\, B(0, \epsilon), \text{ with } 0<\beta<1.
    \end{equation*}
    
    Then 
    \begin{equation*}
    \mu\bigl(B(0,r)\bigr) 
    \sim \int_{-r}^{\,r} |x|^{-\beta}\,dx
    = \frac{2\, r^{\,1-\beta}}{\,1-\beta\,}.
\end{equation*}

    It follows that $\dim_{\mathrm{loc}}(\mu,0) = 1 - \beta \;<\; 1.$ Moreover taking $y_0$ such that $f(y_0) > 0$ yield us to $\mu\bigl(B(y_{0},r)\bigr) \sim 2 f(y_{0})\, r$ and thus to $\dim_{\mathrm{loc}}(\mu,y_0) = 1$. Applying the previous corollary, we obtain that $I(\mu)$ diverges. We have thus constructed a probability measure that is absolutely continuous with respect to the Lebesgue measure, for which $I(\mu) = + \infty$.
\end{rem}

\begin{cor}\label{cor:2.11}
Let $\mu \in P(\Pi)$ be a probability measure and suppose there exists an open interval $I$ such that 
$\mu(I)=0$. Assume furthermore that there is a point $x_{0}$ for which one of the following four conditions holds:
\begin{enumerate}
    \item $-\infty < \alpha_{l}^{-} < \alpha_{r}^{+}$ and $\alpha_{l}^{-} < 1$;
    \item $-\infty < \alpha_{l}^{-} = \alpha_{r}^{+} =:\alpha < 1$;
    \item $-\infty = \alpha_{l}^{-} < \alpha_{r}^{+} \leq 1$;
\end{enumerate}

Then $I_{1}(\mu)=+\infty$.
\end{cor}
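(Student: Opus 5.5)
The plan is to reduce the corollary to the mechanism of Proposition \ref{prop:2.8} and Corollary \ref{cor:2.9}, using the gap interval $I$ in place of the second point $y_0$. Throughout, $\alpha_l^-$ and $\alpha_r^+$ are the quantities of Definition \ref{defn:2.6} at $x_0$; by symmetry ($x\mapsto -x$) the conditions involving the other side are treated identically. Take $x_0=0$ and pick $y_0$ in the interior of $I$. Then $\mu([y_0-r,y_0+r])=0$ for small $r$, so every $\mu$-integral near $y_0$ vanishes. In the earlier proofs the role of $y_0$ was twofold: its compensating bump $-f$ restores zero mean, and its contribution $\beta$ had to be dominated. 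Here the compensating bump may be placed in $I$ at no cost. Its height and width are free, so it absorbs any mass needed for zero mean and contributes nothing to $\phi_{\mu,1}$. Lemma \ref{lem:2.7} applies because $\mu$ has no atom at $0$. Indeed, if there were an atom at $0$, all three conditions would fail, since then $\alpha_l^-=0$ via the convention (the $\log$ ratio tends to $0$), and this case is anyway covered by Proposition \ref{prop:2.3} if it is isolated. I will check this point separately.

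The test function is $v_\varepsilon(x)=f_\delta(x/\varepsilon)$ near $0$, with $f_\delta$ as in Corollary \ref{cor:2.9} with $A<0$, plus a compensating piece supported in $I$. This gives
\[
I_1(\mu)\gtrsim -\frac{A}{\varepsilon}\Big(\min a\;\mu([-\varepsilon\delta,0]) - \max a\,\tfrac{\delta}{1-\delta}\,\mu([0,\varepsilon(1-\delta)])\Big) - \|a\|_\infty A^2\,\mu([-\varepsilon,\varepsilon]).
\]
The last term is bounded. The cases then go as follows.

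Case (1): as in Proposition \ref{prop:2.8}, along a sequence $\varepsilon_k\to0$ we have $\mu([-\varepsilon_k\delta,0])\ge(\varepsilon_k)^{\alpha_l^-+\eta}$. For small $\varepsilon$, $\mu([0,\varepsilon])\le \varepsilon^{\alpha_r^+-\eta}$ (upper limit bound). Choosing $\eta$ small, the negative term is of lower order, and the bracket divided by $\varepsilon$ blows up since $\alpha_l^-+\eta<1$.

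Case (2): both exponents equal $\alpha<1$. I would follow Corollary \ref{cor:2.9} and use $\delta$ small to make the left contribution $\delta^\alpha$ dominate the right one, which is of order $\delta$. This is the main obstacle: with only liminf/limsup information, the constants $C_i$ are not available. I would first try to work along the sequence where the left mass is at least $(\varepsilon\delta)^{\alpha+\eta}$, and bound the right mass by $\varepsilon^{\alpha-\eta}$. The comparison $\delta^{\alpha+\eta}$ versus $\delta\,\varepsilon^{-2\eta}$ then requires letting $\eta\to0$ faster than $\varepsilon$ degrades. If this fails, I would restrict to the genuine local-dimension setting, as in Corollary \ref{cor:2.9}.

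Case (3): $\alpha_l^-=-\infty$ means $\mu$ vanishes on $[-r,0]$, which is itself a gap. Then $0$ lies at the edge of a zero-mass interval while mass accumulates on the right with $\alpha_r^+\le1$. Here I would flip the orientation: place a decreasing ramp of the form $f$ with positive slope term on the empty side and the descending part on the right. This way $-\frac{1}{\varepsilon}\int a f'(x/\varepsilon)\,d\mu \ge c\,\mu([0,\varepsilon])/\varepsilon$, with no negative counterpart. The quantity $\mu([0,\varepsilon])/\varepsilon$ diverges when $\alpha_r^+<1$. In the borderline case $\alpha_r^+=1$, it diverges only if the ratio is unbounded, so I would instead sharpen the test function with $v^2$ contributions, using heights $A\to-\infty$ and widths chosen so that the gain $|A|\mu/\varepsilon$ beats the loss $A^2\mu$. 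Taking $|A|\sim1/(2\varepsilon)$ yields a gain of order $\mu([0,\varepsilon])/\varepsilon^2\to\infty$, since $\mu([0,\varepsilon])\ge\varepsilon^{1+\eta}$ infinitely often. This same trick should also rescue case (2).
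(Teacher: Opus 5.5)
Your case (1) rests on the claim that $\mu([0,\varepsilon])\le\varepsilon^{\alpha_r^+-\eta}$ for all small $\varepsilon$, and this inequality goes the wrong way. Since $\alpha_r^+$ is a $\limsup$ and $\log r<0$, it gives $\mu([0,r])\ge r^{\alpha_r^++\eta}$ for \emph{all} small $r$. The upper bound $\mu([0,r])\le r^{\alpha_r^+-\eta}$ holds only along \emph{some} sequence, and that sequence need not be the one on which $\mu([-\varepsilon\delta,0])\ge\varepsilon^{\alpha_l^-+\eta}$. On the latter sequence the right-hand mass can be of order $r^{\alpha_r^-}$ with $\alpha_r^-$ much smaller than $\alpha_l^-$, and then your bracket is negative for every fixed $\delta$. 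So the comparison step fails in case (1). Your case (2) attempt uses the same reversed bound and is left open. The rescue you suggest at the end does not help: optimizing $|A|$ against the quadratic term only rescales the bracket and cannot change its sign. (The paper's proof uses the same two-sided ramp and gets past this point only by writing the masses as $C r^{\alpha}$. The $\liminf/\limsup$ definitions do not give this, so following the paper will not close the gap.) A smaller slip: an atom at $x_0$ gives $\alpha_l^-=\alpha_r^+=0$, so condition (2) \emph{does} hold there. That case has to be treated, not excluded.

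The missing idea is that the gap $I$ makes any comparison between the two sides of $x_0$ unnecessary. Take $v\in C^1(\Pi)$ with $v'\le 0$ on $\Pi\setminus I$, $v'=-h/r$ on $[x_0-r,x_0]$, $|v|\le 3h$ on $\Pi\setminus I$, and $v$ constant from $x_0+r$ up to $I$. Let $v$ climb back and carry the zero-mean correction inside $I$. Then $-\int_\Pi a v'\,d\mu\ge \frac{h\min a}{r}\,\mu([x_0-r,x_0])$, with no negative counterpart. Optimizing in $h$ against $\int_\Pi a v^2\,d\mu\le 9\|a\|_\infty h^2$ gives
\[
I_1(\mu)\;\ge\;\frac{(\min a)^2}{36\,\|a\|_\infty}\Big(\frac{\mu([x_0-r,x_0])}{r}\Big)^2 .
\]
If $-\infty<\alpha_l^-<1$, then $\mu([x_0-r_k,x_0])\ge r_k^{\beta}$ for some $\beta<1$ along a sequence $r_k\to 0$, so the right-hand side is unbounded. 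This settles (1) and (2) at once, covers an atom at $x_0$, avoids Lemma~\ref{lem:2.7}, and shows the hypotheses on $\alpha_r^+$ are superfluous. In your $f_\delta$ language, this amounts to fixing the outer scale, letting $\delta\to0$, bounding the mass under the gentle slope by $1$, and using $|v|\le|A|\delta$ in the quadratic term. Your case (3) argument is correct: you balance $|A|\sim c/\varepsilon$ against $A^2\mu([0,\varepsilon])$, with $c$ taken small in terms of $\min a/\|a\|_\infty$. It is in fact more careful than the paper's final ``$A\to-\infty$'' step, which neglects the quadratic term.
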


\begin{proof}
We take $y_0 \in I$, then it is clear that  $ \dim_{\text{loc}}^{-,r}(\mu, y_0) = dim_{\text{loc}}^{+,l}(\mu, y_0) = - \infty$. Considering the same test function as previously, we get
\begin{align*}
    I_1(\mu) &\ge -\frac{A}{\varepsilon}\Bigg( \min a \, C_3^+ (\epsilon \delta)^{\alpha^-_l}- \max a \,\frac{C_2^+ \, \delta}{1-\delta}\,(\varepsilon(1-\delta))^{\alpha^+_r}\Bigg) := -\, \frac{A}{\varepsilon} K(\alpha^-_l,\alpha^+_r,\delta).
\end{align*}

If we suppose that $-\infty < \alpha_{l}^{-} < 1$ or $ -\infty < \alpha_{l}^{-} = \alpha_{r}^{+} =: \alpha < 1$ then, as $\min a > 0$, one can prove that there exists $\delta > 0$, small enough such that $K(\alpha^-_l,\alpha^+_r,\delta) > 0$. Passing to the limit as $\varepsilon \to 0$, we obtain the desired result. If $\alpha_{l}^{-} = - \infty$ we obtain
\begin{align*}
    I_1(\mu) &\ge \frac{A}{\varepsilon} \max a \,\frac{C_2^+ \, \delta}{1-\delta}\,(\varepsilon(1-\delta))^{\alpha^+_r}.
\end{align*}

Repeating the previous calculation replacing $A$ by $-A$ lead us to 
\begin{align*}
    I_1(\mu) &\ge -\frac{A}{\varepsilon} \min a \,\frac{C_2^+ \, \delta}{1-\delta}\,(\varepsilon(1-\delta))^{\alpha^+_r},
\end{align*}
which diverges, passing to the limit as $\varepsilon \to 0$ under the condition $\alpha^+_r < 1$. Finally if $\alpha^+_r = 1$, passing to the limit as $A \to -\infty$ leads us to the result.
\end{proof}

\begin{rem}\label{rem:2.12}
    By symmetry, one may interchange the local
dimensions on the left and on the right in the assumptions (i.e.\ swap
$\alpha_l^{\pm}$ with $\alpha_r^{\pm}$) while keeping the same conclusions.
\end{rem}

\begin{rem}\label{rem:2.13}
    The previous corollary allows us to obtain an explicit result of the asymptotic of $I(\mu)$ where $\mu$ is the Cantor measure and more generally for any measure constructed by a similar iterative process (middle-$\alpha$ Cantor measure). As in the Cantor case, the local dimension is attained at zero with $\dim_{\text{loc}}(\mu, 0) = \ln(2)/\ln(3) < 1$. Moreover, taking an interval $I \subset ( 1/3,2/3)$ we get $I(\mu) = +\infty$.
\end{rem}

\section{The case where {\color{red}{\normalfont \textit{b}}} is disjoint from 0}

\subsection{ Asymptotic of the {\color{red}principal} eigenvalue}

This section is devoted to the asymptotic analysis of the {\color{red}principal} eigenvalue of the singularly perturbed spectral problem \eqref{2} under the hypothesis (H2).
However, we shall assume that \( a, b, c \in C^{0}(\Pi) \).

\begin{prop}\label{prop:3.1}
   Under the assumptions stated above,
     \begin{equation*}
     \label{18}
\lim\limits_{\epsilon \rightarrow 0} \lambda_\epsilon = \left(\int_{0}^{1} \frac{c(s)}{b(s)}ds\right) / \left(\int_{0}^{1} \frac{1}{b(s)}ds\right).
\end{equation*}
\end{prop}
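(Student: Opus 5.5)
The plan is to build explicit positive sub- and supersolutions of the eigenvalue problem \eqref{2} with the WKB (Hopf--Cole) ansatz $u=e^{\varphi}$, where $\varphi$ solves the first-order transport equation obtained by dropping the diffusion term. We then compare them with the principal eigenfunction. Set
\[
\bar\lambda := \Big(\int_0^1 \frac{c(s)}{b(s)}\,ds\Big)\Big/\Big(\int_0^1 \frac{1}{b(s)}\,ds\Big),
\]
which is well defined because $b$ has constant sign under (H2). The key observation is that this is exactly the value making $(\bar\lambda - c)/b$ have zero mean on $\Pi$. Hence
\[
\varphi(x)=\int_0^x \frac{\bar\lambda - c(s)}{b(s)}\,ds
\]
is a periodic function. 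The function $u=e^{\varphi}>0$ then satisfies $b u' + c u = \bar\lambda u$ exactly.

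Since $a,b,c$ are only continuous, $\varphi$ is merely $C^1$ and $\varphi''$ need not exist. We therefore regularize. For $\delta>0$ we choose smooth $b_\delta$ and $c_\delta$ with $\|b-b_\delta\|_\infty+\|c-c_\delta\|_\infty\le\delta$; for $\delta$ small, $b_\delta$ is still separated from $0$. We let $\bar\lambda_\delta$ and $\varphi_\delta$ be defined as above with $b_\delta, c_\delta$, and set $u_\delta=e^{\varphi_\delta}\in C^2(\Pi)$. Direct computation gives
\[
L_\epsilon u_\delta = \Big(\epsilon a(\varphi_\delta''+\varphi_\delta'^2) + (b-b_\delta)\varphi_\delta' + (c-c_\delta) + \bar\lambda_\delta\Big)u_\delta .
\]
Because $\varphi_\delta'$ is bounded uniformly in $\delta$, this yields
\[
\big|L_\epsilon u_\delta - \bar\lambda_\delta u_\delta\big| \le (C\delta + C_\delta\,\epsilon)\,u_\delta .
\]
Here $C$ does not depend on $\delta$, while $C_\delta$ bounds $\|a\|_\infty(\|\varphi_\delta''\|_\infty+\|\varphi_\delta'\|_\infty^2)$. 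Moreover $\bar\lambda_\delta\to\bar\lambda$ as $\delta\to 0$ by uniform convergence of $c_\delta/b_\delta$ and $1/b_\delta$.

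The comparison step uses the characterization of the principal eigenvalue
\[
\sup_{u>0}\inf_{x}\frac{L_\epsilon u}{u}=\lambda_\epsilon=\inf_{u>0}\sup_x\frac{L_\epsilon u}{u}
\]
(Donsker--Varadhan / Protter--Weinberger), taken over positive $u\in C^2(\Pi)$. Equivalently, one can pair $L_\epsilon u_\delta$ against the positive principal eigenfunction $m_\epsilon$ of the adjoint operator: $\lambda_\epsilon\int u_\delta m_\epsilon=\int m_\epsilon L_\epsilon u_\delta$. Either way, plugging in $u_\delta$ gives
\[
\bar\lambda_\delta - C\delta - C_\delta\epsilon \le \lambda_\epsilon \le \bar\lambda_\delta + C\delta + C_\delta\epsilon .
\]
Letting first $\epsilon\to0$ and then $\delta\to0$ yields $\lim_{\epsilon\to0}\lambda_\epsilon=\bar\lambda$.

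The main obstacle is justifying this comparison principle when $a$ (and $b,c$) are only continuous. Existence of a positive principal eigenfunction, and of the adjoint one, needs some care in that setting. I would first try the Krein--Rutman / Perron--Frobenius argument applied to the positive resolvent (or semigroup) of $L_\epsilon$ on $C(\Pi)$. This gives a positive eigenvector and a positive eigenmeasure for the dual, which suffices to pair with $u_\delta$. If that is too delicate, I would approximate $a$ by smooth coefficients as well and use continuity of the principal eigenvalue in the coefficients. Since the error terms above depend on $a$ only through $\|a\|_\infty$, both approximations are compatible with the estimate.
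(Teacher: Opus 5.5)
Your proposal is correct and is essentially the paper's argument: the paper also plugs the WKB test function $v_0=\exp(W_0)$, with $W_0'=(\lambda_0-c)/b$, into the min--max characterization of Lemma~\ref{lem:3.2}, and obtains $\lambda_0+\epsilon\inf_x a\bigl((W_0')^2+W_0''\bigr)\le\lambda_\epsilon\le\lambda_0+\epsilon\sup_x a\bigl((W_0')^2+W_0''\bigr)$. Your added mollification of $b$ and $c$ is a genuine improvement, because the paper uses $W_0''$ while assuming only $b,c\in C^0(\Pi)$ (and $c$ must range over all of $C^0(\Pi)$ for the Legendre transform defining $J$), so your $\delta$-approximation is what makes the argument valid at that regularity.
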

    
\vspace{\baselineskip}

To prove this proposition, we will use the following Min-Max type formula \cite{Evans2010} (Courant minimax principle, page 347) to get a lower and an upper bounds of the {\color{red}principal} eigenvalue. The proof of the next lemma and its right part also provides a simple proof of the Protter-Weinberger theorem \cite{ProtterWeinberger1997} on $\Pi$ in our case.

\begin{lem}\label{lem:3.2}

Let us pose
\begin{equation*}
\textit{L}(u) := a(x) u'' + b(x)u' + c(x)u = \lambda u
\end{equation*}
where $a$, $b$, $c$ denote real-valued continuous functions defined on $\Pi$, and $\lambda$ denotes the {\color{red}principal} eigenvalue. Then the following holds
\begin{equation}
\label{6}
    \sup_{v>0}
    \inf_{x \, \in \, \Pi}  \frac{L v(x)}{v(x)} = \lambda = \inf_{v>0}
    \sup_{x \, \in \, \Pi} \frac{L v(x)}{v(x)},
\end{equation}
where the infimum and the supremum are taken on the twice differentiable positive real functions defined on $\Pi$.
    
\end{lem}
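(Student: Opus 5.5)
The plan is to prove \eqref{6} by comparing an arbitrary positive test function $v$ with the principal eigenfunction $u>0$ of $L$. We assume, as is implicit throughout the paper, that $a>0$ on $\Pi$; the positivity of $u$ comes from Krein--Rutman theory applied to the resolvent of $L$ on $C(\Pi)$.

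\textbf{Attainment.} Taking $v=u$ gives $Lu/u\equiv\lambda$. Hence
\[
\sup_{v>0}\inf_{x\in\Pi}\frac{Lv}{v}\ge \lambda \ge \inf_{v>0}\sup_{x\in\Pi}\frac{Lv}{v},
\]
so it remains to prove the two reverse inequalities.

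\textbf{Upper bound on the left-hand side.} Fix $v>0$ of class $C^2$ and set
\[
m:=\inf_{\Pi} \frac{Lv}{v}.
\]
We must show $m\le\lambda$. Suppose instead that $m>\lambda$. Since $\Pi$ is compact and $v,u>0$, the quantity
\[
t^*:=\min_{\Pi} \frac{v}{u}>0
\]
is well defined. Put $w:=v-t^*u$. Then $w\ge 0$ and $w(x_0)=0$ at some point $x_0$, so $x_0$ is a minimum of $w$. Consequently $w'(x_0)=0$ and $w''(x_0)\ge 0$, which gives
\[
Lw(x_0)=a(x_0)w''(x_0)\ge 0 .
\]
On the other hand, using $Lv\ge mv$ and $Lu=\lambda u$,
\[
Lw(x_0)\ge m\,v(x_0)-\lambda t^*u(x_0)=(m-\lambda)\,t^*u(x_0)>0 .
\]
This inequality alone is consistent with the previous one, so we refine the argument. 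Replace $c$ by $c-K$ with $K$ large; this shifts $\lambda$, $m$ and the whole of \eqref{6} by the same constant $-K$, so we may assume $c<0$. In that case we argue on $z:=t^*u-v$ instead. We have $z\le 0$ with maximum $0$ attained at $x_0$, and
\[
(L-\lambda)z=\lambda v-Lv\le(\lambda-m)v<0 .
\]
At the maximum point $x_0$, however, $z'(x_0)=0$, $z''(x_0)\le 0$ and $z(x_0)=0$, so
\[
(L-\lambda)z(x_0)=a(x_0)z''(x_0)\le 0 ,
\]
which again gives no contradiction by itself. The clean route is the \emph{strong maximum principle}. The function $z\le 0$ satisfies $(L-\lambda-M)z\le -Mz+(\lambda-m)v$, which is not obviously of one sign, so we use a different argument. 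We compute $L(v)$ against the adjoint principal eigenfunction $\varphi>0$ of $L^*$, normalised so that $\int_\Pi \varphi\,dx=1$ and satisfying $L^*\varphi=\lambda\varphi$. Integrating,
\[
\lambda\int_\Pi v\varphi\,dx=\int_\Pi (Lv)\,\varphi\,dx\ge m\int_\Pi v\varphi\,dx ,
\]
and since $\int_\Pi v\varphi\,dx>0$ we obtain $m\le\lambda$. The symmetric computation shows that $M:=\sup_\Pi Lv/v$ satisfies $M\ge\lambda$, which completes the chain of inequalities and hence \eqref{6}.

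\textbf{Main obstacle.} The hard step is justifying the existence and positivity of the adjoint eigenfunction $\varphi$ when $a,b,c$ are only continuous, since $L^*\varphi=(a\varphi)''-(b\varphi)'+c\varphi$ then only makes sense in a weak sense. The pairing $\int_\Pi (Lv)\,\varphi\,dx$ must therefore be interpreted with $\varphi$ a positive Radon measure. This measure is obtained by Krein--Rutman applied to the dual of the positive compact resolvent $(K-L)^{-1}$ on $C(\Pi)$, which has a positive eigenfunctional with eigenvalue $(K-\lambda)^{-1}$. That functional is a positive measure $\nu$, and it satisfies $\int_\Pi Lv\,d\nu=\lambda\int_\Pi v\,d\nu$ for every $v\in C^2(\Pi)$. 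This is all the argument above needs, with $\int_\Pi v\,d\nu>0$ because $v>0$ and $\nu\neq 0$.
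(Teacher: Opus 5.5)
Your final argument, which pairs $Lv\ge mv$ against a positive eigenfunctional $\nu$ of the adjoint resolvent, is correct. It is a different route from the paper's, and the detour that led you there came from evaluating at the wrong extremum.

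The paper stays with the elementary maximum principle. It writes $v=\omega\phi$ with $\phi>0$ the principal eigenfunction, so that $Lv-mv=\phi\bigl(a\omega''+(2a\phi'/\phi+b)\omega'+(\lambda-m)\omega\bigr)\ge 0$. It then evaluates this at a \emph{maximum} of $\omega=v/\phi$, where $\omega'=0$ and $\omega''\le 0$, which gives $(\lambda-m)\omega(x_0)\ge 0$. In your notation, take $T:=\max_\Pi v/u$ and $w:=Tu-v\ge 0$, vanishing at some $x_0$. Then
\[
0\le a(x_0)\,w''(x_0)=Lw(x_0)=\lambda T u(x_0)-Lv(x_0)\le(\lambda-m)\,T\,u(x_0),
\]
so $m\le\lambda$. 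This needs no shift of $c$ and no strong maximum principle. Your computation at the \emph{minimum} of $v/u$ is in fact exactly the proof of the other inequality $M\ge\lambda$, once you use $Lv\le Mv$ there instead of $Lv\ge mv$.

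What your adjoint route buys is an identity rather than a two-sided bound: $\lambda=\int (Lv/v)\,v\,d\nu\big/\int v\,d\nu$, so $\lambda$ is a weighted average of $Lv/v$. It costs more than the paper's route:
\begin{itemize}
\item the dual half of Krein--Rutman;
\item the check that the positive eigenfunctional of $\bigl((K-L)^{-1}\bigr)^*$ sits at $(K-\lambda)^{-1}$ for the same $\lambda$;
\item $C^2$ solvability of $(K-L)v=g$ with merely continuous coefficients.
\end{itemize}
The paper needs only the primal positive eigenfunction.

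Your pairing identity is also derived only for $v\in C^2(\Pi)$, so that $Lv\in C(\Pi)$ can be tested against $\nu$. The lemma is stated for twice differentiable $v$. The pointwise argument covers that class at no extra cost, whereas yours would need an additional regularity step, for instance showing that $\nu$ has a density and that $v''$ is integrable when $Lv\ge mv$.
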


\begin{proof}

     Let us fix $v$, a twice differentiable positive real function defined on $\Pi$. By the Krein-Rutman theorem \cite{Brezis2011}, there exists a positive eigenfunction $\phi$ such that $\textit{L}(\phi) = \lambda \phi$, and therefore we can consider the positive ratio $\omega:= v/ \phi$. Setting $m := \inf_{x \in  \Pi}  \frac{L v(x)}{v(x)} $, we have
    \begin{equation*}
           0 \leq Lv - mv.
    \end{equation*}

Moreover, injecting $v = \omega \, \phi $ in the previous inequality, we derive
\begin{align*}
    Lv - mv & = a(x) (\omega''\phi + 2\omega'\phi' + \omega\phi'') + b(x)(\omega' \phi +\omega \phi') + c(x)(\omega \phi) -m(\omega \phi)\\
    & = \phi ( \widetilde L\omega + \lambda \omega - m \omega ),
\end{align*}
    where $\widetilde L \omega = a(x)\omega'' + (2a(x)\phi'/\phi + b(x))\omega'$.
    
\vspace{\baselineskip}  

Thus, using the positivity of  $\phi$ we get
\begin{equation*}
    0 \leq \widetilde L\omega + \lambda \omega - m \omega.
\end{equation*}
Taking the previous inequality at the point $x_0$, a maximum point of the twice differentiable function $\omega$, and using $\omega^\prime(x_0) = 0$ we get 

\begin{equation*}
    0 \leq a(x_0)\omega''(x_0) + \lambda \omega(x_0) - m \omega(x_0) \leq \lambda \omega(x_0) - m \omega(x_0),
\end{equation*}   
and $m \leq \lambda$ follows. Passing to the supremum, as the previous inequality is independent of our choice of $v$, we get one inequality. By proceeding in the same way, setting $M:= \sup_{x\in\Pi} \frac{L v(x)}{v(x)}$ and then taking the infimum over all admissible \(v > 0\), we obtain {\color{red} the other} inequality. Thus we have established
\begin{equation*}
    \sup_{v>0}
    \inf_{x \, \in \, \Pi}  \frac{L v(x)}{v(x)} \leq \lambda \leq \inf_{v>0}
    \sup_{x \, \in \, \Pi} \frac{L v(x)}{v(x)}.
\end{equation*}

The reverse inequalities are trivial. Indeed, for $v_0$ an eigenfunction associated with the {\color{red}principal} eigenvalue {\color{red} $\lambda$}, we have
\begin{equation*}
     \inf_{v>0}
    \sup_{x \, \in \, \Pi} \frac{L v(x)}{v(x)} \leq \sup_{x \, \in \, \Pi} \frac{L v_0(x)}{v_0(x)} = \lambda = \inf_{x \, \in \, \Pi} \frac{L v_0(x)}{v_0(x)} \leq \sup_{v>0}
    \inf_{x \, \in \, \Pi} \frac{L v(x)}{v(x)},
\end{equation*}
which gives us the result.
\end{proof}
 
\begin{proof}[Proof of Proposition \ref{prop:3.1}]

 We choose a test function $v_0 = exp(W_0)$ where $W_0$ is a solution of 
\begin{equation*}
    W_0' = (\lambda_0 -c(x))/ b(x) \; \; \; \text{with} \; \; \; \lambda_0 := \left(\int_{0}^{1} \frac{c(s)}{b(s)}ds\right) / \left(\int_{0}^{1} \frac{1}{b(s)}ds\right).
\end{equation*}

To get the result, we inject $v_0$ as a test function in \eqref{6} applied to our operator $\textit{L}_\epsilon$. The calculation gives us
\begin{align*}
    \frac{\textit{L}_\epsilon v_0(x)}{v_0(x)} &= \epsilon \, a(x) ((W_0')^2 + W_0'') + bW_0' + c
    = \epsilon \, a(x) ((W_0')^2 + W_0'') + \lambda_0.
\end{align*}

Finally, applying Lemma \ref{lem:3.2}, we get
\begin{align*}
     \lambda_0 + \epsilon \inf_{x\, \in \, \Pi}  a(x) ((W_0')^2 + W_0'') \leq \lambda_\epsilon \leq 
    \lambda_0 + \epsilon \sup_{x\, \in\, \Pi} a(x) ((W_0')^2 + W_0'').
\end{align*}

Passing to the limit as $\epsilon$ tends to $0$ gives us the result.
\end{proof}
\subsection{Explicit calculation of J}

Under hypothesis (H2), using Proposition \ref{prop:3.1}, we can rewrite $\Lambda(c)$ in the form
\begin{equation}\label{7}
\Lambda(c) := \lim\limits_{\epsilon \rightarrow 0} \lambda_\epsilon = \lambda_0 =  \int_0^1 c(s) \,\omega(s)\, ds \; \; \; \text{where} \; \; \; \omega(s) = \frac{1}{b(s)} \, \biggl / \int_0^1 \frac{1}{b(u)}du.
\end{equation}

We aim to establish the following result.

\begin{prop}\label{prop:3.3}
Under the assumptions stated above,
\begin{equation*}
J(\mu) = \begin{cases}
\;\;\; 0 & \displaystyle if \;\;\; \mu=\omega \, ds,\\
+\infty & \text{otherwise}.
\end{cases}
\end{equation*}
\end{prop}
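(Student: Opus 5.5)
By \eqref{7} and Proposition \ref{prop:3.1}, the limit $\Lambda(c)=\int_\Pi c\,\omega\,ds$ is a \emph{linear} functional of $c$. Hence
\[
J(\mu)=\sup_{c\in C^0(\Pi)}\Bigl\{\int_\Pi c\,d\mu-\int_\Pi c\,\omega\,ds\Bigr\}=\sup_{c\in C^0(\Pi)}\int_\Pi c\,d(\mu-\omega\,ds),
\]
and the whole proof reduces to evaluating the supremum of a linear functional over a vector space. I would split into the two cases of the statement.

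\emph{Case $\mu=\omega\,ds$.} Under (H2), $b$ is continuous and separated from $0$, so $\omega$ is a continuous, strictly positive density with $\int_\Pi\omega\,ds=1$. Thus $\omega\,ds\in P(\Pi)$ is a legitimate argument of $J$. For every $c$ the bracket vanishes identically, so $J(\omega\,ds)=0$.

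\emph{Case $\mu\neq\omega\,ds$.} The signed measure $\nu:=\mu-\omega\,ds$ is nonzero. By the Riesz representation theorem, equivalently by uniqueness of a finite Borel measure on the compact space $\Pi$ determined by its integrals against continuous functions, there exists $c_0\in C^0(\Pi)$ with $\int_\Pi c_0\,d\nu\neq 0$. Replacing $c_0$ by $-c_0$ if necessary, we may assume $\int_\Pi c_0\,d\nu=\eta>0$. Testing $J$ with $c=t\,c_0$ for $t>0$ gives
\[
J(\mu)\ \ge\ t\,\eta\ \longrightarrow\ +\infty \quad\text{as } t\to+\infty,
\]
so $J(\mu)=+\infty$.

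The only point requiring care is the existence of the separating continuous function $c_0$, and it is standard. One could also avoid Riesz explicitly: if $\int_\Pi c\,d\nu=0$ for all continuous $c$, approximate indicators of closed arcs by continuous functions from above and use dominated convergence. This shows that $\nu$ vanishes on all arcs, hence on all Borel sets. Finally, the identity $\Lambda=\lim_\epsilon\lambda_\epsilon^{c}$ must hold for every $c\in C^0(\Pi)$. This is exactly what Proposition \ref{prop:3.1} provides, since it was proved under the assumption $c\in C^0(\Pi)$. Should the test function $W_0$ there require more regularity, one would instead first take $c$ smooth and use the monotonicity of the eigenvalue in $c$ (a consequence of Lemma \ref{lem:3.2}) to pass to continuous $c$ by uniform approximation.
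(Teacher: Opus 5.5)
Your proof is correct and follows the paper's argument. By \eqref{7}, $J(\mu)=\sup_c\int_\Pi c\,d(\mu-\omega\,ds)$; this is zero at $\mu=\omega\,ds$ and is otherwise made infinite by scaling a continuous function that separates $\mu$ from $\omega\,ds$ (Riesz--Markov). Your side remark on the regularity of $W_0$ in Proposition~\ref{prop:3.1} is a useful point the paper leaves implicit: you repair it by first treating smooth $c$ and then using the monotone (indeed sup-norm Lipschitz) dependence of the eigenvalue on $c$ given by Lemma~\ref{lem:3.2}.
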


\begin{proof}
    Using \eqref{7}, we get 
\begin{equation*}
   J(\mu) = \sup_{c\, \in \, C^0(\Pi)}\int_0^1 c(s) (d\mu(s)-\omega(s)ds\bigr). 
\end{equation*}

This supremum is finite if and only if the signed measure $\mu - \omega \, ds$ is zero. Otherwise, we can make it arbitrarily large. Indeed, if $\mu \neq \omega \, ds$, by Riesz–Markov theorem, there exists a continuous function $\phi$ on $\Pi$ such that $\int_0^1 \phi(s) (d\mu(s)-\omega(s)ds\bigr) \neq 0$. Considering $ M\phi$ or $-M\phi$ and letting $M$ go to $+ \infty$, we get the desired result.
\end{proof}

\subsection{Explicit calculation of I}

We are now interested in the asymptotic behavior of $I_\epsilon(\mu)$. Using the change of variable $u= exp(w)$ where $w$ is a $C^2(\Pi)$ function \eqref{5} becomes 
\begin{align}
\label{8}
I_\epsilon(\mu) &= \sup\limits_{w \in C^2(\Pi) } - \int_{\Pi} \epsilon \; a(x)( w''(x)+ w'(x)^2) + b(x)w'(x) \, d\mu(x) = \sup\limits_{v \in \Tilde{C}^{1,0}(\Pi)}\phi_{\mu,\epsilon}(v) ,
\end{align}
where $\Tilde{C}^{1,0}(\Pi) = \lbrace v \in C^1(\Pi), \int_{\Pi} v(x) dx= 0  \rbrace$ and $\phi_{\mu,\epsilon}(v) = - \int_{\Pi} \epsilon \, a\left( v'+ v^2\right) + bv\, d\mu.$

\vspace{\baselineskip}  

We state the main result of this section in the form of the following proposition.

\begin{prop}\label{prop:3.4}
Under the assumptions stated above,    
\begin{equation*}
I(\mu) = \begin{cases}
\;\;\; 0 & \displaystyle if \;\;\; \mu=\omega \, ds,\\
+\infty & \text{otherwise}.
\end{cases}
\end{equation*}
\end{prop}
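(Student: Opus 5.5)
The proof splits according to whether $\mu=\omega\,ds$, and in both cases we work with the representation \eqref{8}, $I_\epsilon(\mu)=\sup_{v\in\Tilde{C}^{1,0}(\Pi)}\phi_{\mu,\epsilon}(v)$.

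\emph{Divergence when $\mu\neq\omega\,ds$.} The plan is to exhibit a single fixed test function $v$ whose drift term $-\int_\Pi b v\,d\mu$ is strictly positive, and then scale it. We look for $v\in\Tilde{C}^{1,0}(\Pi)$ with $\int_\Pi b v\,d\mu\neq 0$. The functions $g=bv$ with $v$ of zero Lebesgue mean are exactly the $C^1$ functions $g$ satisfying
\[
\int_\Pi \frac{g}{b}\,dx=0,
\]
equivalently $\int_\Pi g\,\omega\,dx=0$. Suppose $\int_\Pi g\,d\mu=0$ held for all such $g$. Since the $C^1$ functions are dense in $C^0$, the measure $\mu$ would annihilate the hyperplane $\{g:\int g\,\omega\,dx=0\}$ in $C^0$. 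It would then be proportional to $\omega\,dx$, and since both are probability measures, $\mu=\omega\,dx$, a contradiction.

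So pick such a $v$, replace it by $-v$ if necessary so that $\int_\Pi bv\,d\mu=-\kappa<0$, and set $v_M=Mv$. This gives
\[
I_\epsilon(\mu)\ge M\kappa-\epsilon\Bigl(M\int_\Pi a v'\,d\mu+M^2\int_\Pi a v^2\,d\mu\Bigr).
\]
Letting $\epsilon\to0$ with $M$ fixed yields $\liminf_{\epsilon\to 0}I_\epsilon(\mu)\ge M\kappa$. Letting $M\to\infty$ then gives $I(\mu)=+\infty$.

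\emph{Vanishing when $\mu=\omega\,ds$.} Since $I_\epsilon\ge 0$, it suffices to show $I_\epsilon(\omega\,ds)\to 0$. Because $b\omega\equiv\bigl(\int_\Pi 1/b\bigr)^{-1}$ is constant, the drift term $\int_\Pi bv\,\omega\,dx$ vanishes for every $v$ of zero mean. We are left with
\[
I_\epsilon(\omega\,ds)=\epsilon\sup_{v}\Bigl(-\int_\Pi a\omega v'\,dx-\int_\Pi a\omega v^2\,dx\Bigr),
\]
which is $\epsilon$ times the rate function of the driftless problem evaluated at the measure $\omega\,dx$. This supremum is bounded independently of $\epsilon$ by the Young inequality argument of Proposition~\ref{prop:2.4}. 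Integrating by parts, $-\int_\Pi a\omega v'\,dx=\int_\Pi (a\omega)'v\,dx$, and Young's inequality bounds this by
\[
\frac14\int_\Pi\frac{((a\omega)')^2}{a\omega}\,dx+\int_\Pi a\omega v^2\,dx.
\]
Hence $I_\epsilon(\omega\,ds)\le \frac{\epsilon}{4}\int_\Pi\frac{((a\omega)')^2}{a\omega}\,dx$, which is finite because $a\omega>0$.

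\emph{Main obstacle.} This is the regularity needed for the integration by parts. It requires $a\omega\in C^1$, i.e.\ $a,b\in C^1$, which holds under the standing $C^2$ assumptions of the introduction. If only continuity is assumed, as in Section~3.1, I would first approximate $a\omega$ by smooth positive functions and control the error. Alternatively, I would work with $u>0$ directly in the original formula and invoke the classical Donsker--Varadhan fact that $I_1$ is finite for measures with a smooth positive density.
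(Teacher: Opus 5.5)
Your proof is correct and is essentially the paper's: on $\omega\,ds$ the drift term vanishes because $b\omega$ is constant, and the Young bound of Proposition~\ref{prop:2.4} gives $I_\epsilon(\omega\,ds)=O(\epsilon)$; for $\mu\neq\omega\,ds$, a scaled zero-mean test function with $\int_\Pi bv\,d\mu\neq0$ forces divergence. Your iterated limit in $M$ and then $\epsilon$ plays the role of the paper's $v_\epsilon=\epsilon^{-1/4}u$, and your hyperplane argument and positive weight $a\omega$ in place of $a/b$ tidy up two points the paper leaves loose. One correction to your closing remark: $C^1$ regularity of $a,b$ is genuinely necessary, not a technical convenience. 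Finiteness of the supremum in your display forces, by scaling $v\mapsto tv$, the bound $\bigl|\int_\Pi a\omega\,v'\,dx\bigr|\le K\|v\|_{L^2(\Pi)}$, i.e.\ $(a\omega)'\in L^2(\Pi)$. So if $a\omega$ is merely continuous and not in $H^1(\Pi)$, then $I_\epsilon(\omega\,ds)=+\infty$ for every $\epsilon>0$, and no approximation argument can rescue the $C^0$ setting announced at the start of Section~3.
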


\begin{proof}
First of all let us remark that for any $v \in \Tilde{C}^{1,0}(\Pi)$,
\begin{equation*}
    \phi_{\omega\, ds,\epsilon}(v)
     = - \, \epsilon \,C \int_{0}^{1} \,\frac{a(x)}{b(x)}\bigl(v'(x) + v(x)^2\bigr) \, dx \,\,\, \text{with} \,\,\, C = \left( \int_{0}^{1} \frac{1}{b(u)} \, du \right)^{-1}.
\end{equation*}

We will show that for any $\epsilon > 0$, the supremum in \eqref{8} with $\mu = \omega \, ds$ is attained at 
\begin{equation*}
    v_0 = \frac{\Tilde{a}'}{2\Tilde{a}} = \frac{1}{2} \,ln(\Tilde{a})' \; \; \; where \; \; \; \Tilde{a} = a/b.
\end{equation*}

Indeed, calculating we get
{\footnotesize
\begin{equation*}
    \phi_{\omega \, ds,\epsilon}(v_0)
     = -C \int_{0}^{1} \left( \epsilon \,\Tilde{a}(x)\bigl(v_0'(x) + v_0(x)^2\bigr) \right) \, dx
     = \frac{\epsilon \; C}{4} \int_0^1 \frac{(\Tilde{a}')^2}{\Tilde{a}} \; dx \,\,\, \text{with} \,\,\, C = \left( \int_{0}^{1} \frac{1}{b(u)} \, du \right)^{-1}.
\end{equation*}}

Moreover, one can show that the supremum is attained at this point, as for any $v \in \Tilde{C}^{1,0}(\Pi)$, the Young Inequality:
\begin{equation*}
    -\int_0^1 \Tilde{a}v' \, dx= \int_0^1 \Tilde{a}'v \, dx= \int_0^1 \frac{\Tilde{a}'}{\sqrt{2\Tilde{a}}}\, \times \sqrt{2\Tilde{a}} \;v \, dx \leq \frac{1}{2} \int_0^1 \frac{(\Tilde{a}')^2}{2\Tilde{a}}\, dx + \frac{1}{2} \int_0^1 2\Tilde{a}v^2 \, dx
\end{equation*}
gives us
\begin{equation*}
    -\int_0^1 \Tilde{a}v' +\Tilde{a}v^2 \, dx \leq \frac{1}{4} \int_0^1 \frac{(\Tilde{a}')^2}{\Tilde{a}} \, dx.
\end{equation*}

Passing to the limit as $\epsilon$ goes to $0$ in $\phi_{\omega,\epsilon}(v_0)$, we obtain that $I(\mu) = 0$ if $\mu=\omega \, ds$. We finish the proof showing that if $\mu$ is a measure different from $\omega \, ds$ then $ I(\mu) := \lim\limits_{\epsilon \rightarrow 0} I_\epsilon(\mu) = + \infty.$ Let us define the test function $v_\epsilon$ given by
\begin{equation*}
  v_\epsilon = \frac{1}{\sqrt[4]{\epsilon}} \, u \;  \;  \; \text{with} \;  \;  \; \int_0^1 u \, dx = 0.
\end{equation*}   

Then a small calculation gives us 
\begin{align*}
      &\phi_{\mu,\epsilon}(v_\epsilon) = - \int_{0}^1 \left(\epsilon \; a\left( v_\epsilon'+ v_\epsilon^2\right) + bv_\epsilon \right) \, d\mu \\
      &= - \int_0^1 \epsilon^{3/4} a(x) u'(x) \,
+ \sqrt\epsilon \, a(x) u(x)^2 \,
+ \frac{1}{\sqrt[4]{\epsilon}} \, b(x) u(x) \, d\mu(x)\\
&= -\, \frac{1}{\sqrt[4]{\epsilon}} \int_0^1 b(x) u(x) \, d\mu(x) + O(\sqrt\epsilon).    
\end{align*}

We end the proof by choosing $u$ such that $\int_0^1 b(x) u(x) \, d\mu(x)$ is negative and then passing to the limit as $\epsilon$ goes to $0$. To show that such a choice of $u$ is possible, it suffices to rule out that \(\int_0^1 b(x)\,u(x)\,d\mu(x)=0\) for every zero-mean functions \(u\). Suppose that it holds, then $ b\,\mu$ is proportional to $dx$. Since the case where \(b\,\mu\) is proportional to \(dx\) has been excluded by assumption, the result follows passing to the limit as $\epsilon$ goes to $0$.
\end{proof}

\section{The case where  {\normalfont \textit{b}}  has finitely many zeros}\label{sec:4}

\subsection{ Asymptotic of the {\color{red}principal} eigenvalue}\label{sec:4.1}

In this section, we are interested in the asymptotic analysis of the {\color{red}principal} eigenpair for the singularly perturbed
spectral problem \eqref{2} under the hypothesis (H3). We shall assume that $a \in C^2(\Pi)$, $b \in C^1(\Pi)$, $c \in C^0(\Pi)$ and that $b$ changes sign at least once on $\Pi$ and vanishes at a finite number of points 
\( \{ x_1, \dots, x_n \} \subset \Pi \). Each zero \(x_i\) of \(b\) is either of finite order $m_i$ or flat: 

\begin{itemize}
  \item Zero of finite order \(m_i \ge 1\).
  We say that \(x_i\) is a zero of finite order \(m_i\) if \(b \in C^{m_i}\) in a neighborhood of \(x_i\) and
  \[
    b(x_i)=b'(x_i)=\cdots=b^{(m_i-1)}(x_i)=0,
    \qquad b^{(m_i)}(x_i)\neq 0.
  \]
  \item Flat zero at \(x_i\).
  We say that \(x_i\) is a flat zero if \(b \in C^\infty\) in a neighborhood of \(x_i\) and
  \[
    b^{(k)}(x_i)=0 \quad \text{for all } k\in\mathbb{N}.
  \]
\end{itemize}
We now state the main result of this section in the form of the following theorem. 

\begin{thm}\label{thm:4.1}
   Under the assumptions stated above,
\begin{equation}
\label{9}
\lim\limits_{\epsilon \rightarrow 0} \lambda_\epsilon = \max\limits_{x, \, b(x)=0 } \left(-(0\lor b^\prime(x)) + c(x) \right).
\end{equation}  
\end{thm}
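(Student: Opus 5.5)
Set $\lambda^\ast := \max_{x,\,b(x)=0}\bigl(-(0\lor b'(x))+c(x)\bigr)$. We prove $\limsup_{\epsilon\to0}\lambda_\epsilon\le\lambda^\ast$ and $\liminf_{\epsilon\to0}\lambda_\epsilon\ge\lambda^\ast$. Both bounds come from the min--max characterization of Lemma \ref{lem:3.2}, applied to $L_\epsilon$ with explicit test functions $v=\exp(W/\epsilon)$ or $v=\exp(W)$. This follows the logic of \cite{PiatnitskiRybalko2015}, adapted to zeros of arbitrary order. As in Section 3, for $v=e^{W}$ we have
\[
\frac{L_\epsilon v}{v} = \epsilon a\bigl(W''+(W')^2\bigr) + bW' + c .
\]
With $v=e^{W/\epsilon}$ this ratio becomes $a(W')^2/\epsilon + aW'' + bW'/\epsilon + c$. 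The guiding heuristic is that $\epsilon\log u_\epsilon$ converges to a viscosity solution of $a p^2+bp=0$. Hence $p=0$ or $p=-b/a$, and the eigenvalue is selected by the zeros of $b$.

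\textbf{Lower bound.} Fix a zero $x_i$ at which the maximum is attained. The cheap approach is a Rayleigh-type localization using the left inequality in \eqref{6}. Near $x_i$ we want $v>0$ with $L_\epsilon v/v \ge \lambda^\ast-\delta$ locally, which is not available globally. Instead we use a generalized principal eigenvalue on a small interval $B(x_i,\eta)$ with Dirichlet conditions, and monotonicity of the principal eigenvalue with respect to the domain. This monotonicity follows from the same maximum-principle argument as in Lemma \ref{lem:3.2}, comparing with the positive periodic eigenfunction.

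On $B(x_i,\eta)$, replace $b$ by its linearization and $c$ by $c(x_i)-\delta$.
\begin{itemize}
\item If $b'(x_i)\ne0$, the local problem is an Ornstein--Uhlenbeck operator $\epsilon a u''+b'(x_i)(x-x_i)u'$. Its ground state is explicit: $u=1$ (eigenvalue $0$) when $b'(x_i)<0$, and the Gaussian $u=e^{-b'(x_i)(x-x_i)^2/(2\epsilon a)}$ (eigenvalue $-b'(x_i)$) when $b'(x_i)>0$. Cutting these off in the Dirichlet window gives errors $o(1)$.
\item For zeros of order $m_i\ge2$ or flat zeros, $b'(x_i)=0$. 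We need eigenvalue $\ge c(x_i)-\delta$, obtained with $W=-\frac{1}{\epsilon}\int_{x_i}^x (b/a)_+\,ds$ near $x_i$. The corrector term $aW''=-(b/a)'a$ is small near $x_i$ because $b'(x_i)=0$, and the sign of $a(W')^2/\epsilon+bW'/\epsilon$ is arranged to be nonnegative.
\end{itemize}
Letting $\delta,\eta\to0$ yields $\liminf\lambda_\epsilon\ge\lambda^\ast$.

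\textbf{Upper bound.} This is the main obstacle. We need one global periodic test function with $\sup_x L_\epsilon v/v\le\lambda^\ast+\delta$.

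Away from the zeros, $|b|\ge\kappa_\eta>0$ on $\Pi\setminus\bigcup_i B(x_i,\eta)$. There we take $W'=(\lambda^\ast+\delta-c)/b$ exactly as in the proof of Proposition \ref{prop:3.1}, which makes $bW'+c=\lambda^\ast+\delta$, with an $O(\epsilon)$ error.

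Near each zero we must glue. Near an unstable zero ($b$ increasing), the local Gaussian-type choice $W=-\frac1\epsilon\int(b/a)$ gives $L_\epsilon v/v\approx -b'(x_i)+c$ to leading order. Near a stable or degenerate zero, an almost constant $W$ gives $\approx c(x_i)\le\lambda^\ast$.

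The difficulty is that the outer solution $W'=(\lambda^\ast+\delta-c)/b$ blows up like $1/b$ at the zeros, and the total increment of $W$ around the circle must vanish for periodicity. We resolve both issues by choosing the sign of $\lambda^\ast+\delta-c$. Since $\lambda^\ast+\delta>c(x_i)-(0\lor b'(x_i))$, the outer $W'$ has the right sign to be matched with the inner solution, which is where the $-(0\lor b')$ term enters. The integral of $1/b$ near a zero diverges for every order $m_i\ge1$, so we have enough freedom to adjust the jump of $W$ across each $B(x_i,\eta)$ and obtain a periodic $W$. The degenerate and flat cases require checking that the corrector $\epsilon aW''$ stays $o(1)$ on the matching region; we would choose $\eta=\eta(\epsilon)\to0$ slowly to control this.

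Finally, Lemma \ref{lem:3.2} gives $\limsup\lambda_\epsilon\le\lambda^\ast+\delta$, and $\delta\to0$ concludes the proof of \eqref{9}.
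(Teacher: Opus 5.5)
Your route is genuinely different from the paper's, and it can be made to work, but as written the two places where the real work lies are not yet proofs. The paper never builds a global barrier. It takes the limit $W$ of $-\epsilon\log u_\epsilon$, which is periodic for free and solves $a(W')^2-bW'=0$ in the viscosity sense. It then uses the Ma\~n\'e-potential representation over the zeros of $b$ to find a significant component $x_0$, and touches $W$ there with the local barriers $W_\delta^\pm$ to get $\lim\lambda_\epsilon=\Lambda(x_0)\le\lambda^\ast$. The lower bound comes from the penalized operator $L_\epsilon-k/\epsilon$, a soft version of your Dirichlet localization. Your min--max approach avoids weak KAM theory, but it moves all the difficulty into one global periodic supersolution.

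\textbf{Gluing at unstable zeros.} The gluing at an unstable zero with $c(x_i)\ge\lambda^\ast+\delta$ (the only place the Gaussian branch is needed) does not follow from controlling correctors. Write $v=e^W$ with $W'=-\theta b/(a\epsilon)+r$, $0\le\theta\le1$. Then
\[
\frac{L_\epsilon v}{v}=-\frac{b^2}{a\epsilon}\,\theta(1-\theta)-\theta' b-\theta a\Bigl(\frac{b}{a}\Bigr)'+b(1-2\theta)r+c+\epsilon a(r'+r^2).
\]
The term $-\theta'b$ is $\ge0$ on both sides of $x_i$, and $b(1-2\theta)r$ vanishes at $\theta=1/2$. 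So at $\theta=1/2$ the order-one part is at least $c-\tfrac12 a(b/a)'\approx c(x_i)-b'(x_i)/2$. This exceeds $\lambda^\ast+\delta$ whenever $c(x_i)-b'(x_i)/2>\lambda^\ast+\delta$.

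The gluing works only because the first term, of order $-\eta^2\theta(1-\theta)/\epsilon$ in the matching zone, is strictly negative. That is the quantitative content your remark that the outer slope ``has the right sign'' must be turned into. You also need a check near $\theta\in\{0,1\}$, where that term degenerates. A smooth profile with $|\theta'|\lesssim\sqrt{\theta(1-\theta)}/\eta$ makes $\theta'b$ either dominated by the negative term or $O(\epsilon/\eta^2)$, so you need $\eta\gg\sqrt\epsilon$.

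\textbf{Periodicity.} This needs more than the divergence of $\int 1/b$. The Gaussian window itself contributes the increment $-\epsilon^{-1}\int\theta b/a$, generically of order $\eta^3/\epsilon$. It must be cancelled, e.g.\ by an asymmetric cutoff with $\int\theta b/a=0$. The remaining bounded mismatch is then absorbed using that only $bW'+c\le\lambda^\ast+\delta$ is required. So $W'$ may be lowered on $\{b>0\}$ and raised on $\{b<0\}$, and both sets exist because $b$ changes sign. Since $c$ is only continuous, first replace it by a smooth $\tilde c$ with $\|c-\tilde c\|_\infty<\delta$ (the eigenvalue is $1$-Lipschitz in $c$), so that $(\lambda^\ast+\delta-c)/b$ is $C^1$.

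\textbf{Lower bound.} Replacing $b$ by its linearization is not a valid comparison, since the principal eigenvalue is not monotone in the drift. It is also unnecessary. Let $\chi$ be a cutoff vanishing at the ends of the window.
\begin{itemize}
\item At an unstable zero, $v=\chi\exp(-\epsilon^{-1}\int_{x_i}^x b/a)$ gives exactly $L_\epsilon v/v=c-a(b/a)'+\epsilon a\chi''/\chi-b\chi'/\chi$, with $-b\chi'\ge0$.
\item At a stable zero, $v=\chi$ gives $c+\epsilon a\chi''/\chi+b\chi'/\chi$, with $b\chi'\ge0$.
\end{itemize}

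Your degenerate-zero choice $W=-\epsilon^{-1}\int(b/a)_+$ does not localize. For an even-order zero with $b>0$ on both sides, it is of size $e^{C/\epsilon}$ on the left. The cutoff term there becomes $-b\chi'/\chi$, which is negative and unbounded at the left edge. The same happens where it is $\equiv1$ and $b$ points outward.

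Use instead $v=\chi e^{-\delta(x-x_i)^2/\epsilon}$, which is the paper's $W_\delta^+$. Because $b=o(|x-x_i|)$, both $4a\delta^2(x-x_i)^2-2\delta b(x-x_i)\ge0$ and $(b-4a\delta(x-x_i))\chi'\ge0$ hold on a $\delta$-dependent window. Hence $L_\epsilon v/v\ge c-2a\delta-O(\epsilon)$ there. With these test functions, your domain monotonicity gives $\liminf\lambda_\epsilon\ge\lambda^\ast$. You also need the fact that a nonnegative subsolution vanishing on the boundary bounds the Dirichlet eigenvalue from below.
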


In subsection \ref{sec:4.1.1} we establish an asymptotic characterization of the
{\color{red}principal} eigenfunction; the main result
appears in Proposition \ref{prop:4.5}. Subsection  \ref{sec:4.1.2} provides a detailed
description and representation of the resulting asymptotics, culminating
in Proposition \ref{prop:4.8}. Through subsections  \ref{sec:4.1.3} and  \ref{sec:4.1.4}, we complete the proof of Theorem \ref{thm:4.1} by treating respectively the case of zeros of finite order and that of flat zeros. In subsection  \ref{sec:4.1.5}, we establish an exact formula for the asymptotics of the {\color{red}principal} eigenfunction under more restrictive assumptions.

\subsubsection{Asymptotic characterization of the
eigenfunctions}\label{sec:4.1.1}
 
\begin{defn} \label{defn:4.2} \cite{Fathi2008}, (Definition~7.2.3, page~215)
Let \( H : \mathbb{R} \times \Pi \to \mathbb{R} \) be a continuous function. A continuous function \( W :\Pi \to \mathbb{R} \) is called a \emph{viscosity solution} of the Hamilton–Jacobi equation
\begin{equation*}
    H(W'(x), x) = 0 \,\,\, \text{on} \,\,\, \Pi,
\end{equation*}
if the following conditions are satisfied:
\begin{itemize}
    \item For every test function \( \phi \in C^\infty(\Pi) \) such that \( W - \phi \) has a local maximum at a point \( \zeta \in \Pi \), we have
    \begin{equation*}
    H(\phi'(\zeta), \zeta) \leq 0.
    \end{equation*}
    \item For every test function \( \phi \in C^\infty(\Pi) \) such that \( W - \phi \) has a local minimum at a point \( \zeta \in \Pi \), we have
    \begin{equation*}
    H(\phi'(\zeta), \zeta) \geq 0.
    \end{equation*}
\end{itemize}
\end{defn}
\begin{rem}\label{rem:4.3}
    Suppose that $W \in C^{1}(\Pi)$ is a classical solution of $H (W'(x), x) = 0$ on $\Pi$, then it is also a viscosity solution of the equation. Indeed, let $\phi \in C^{\infty}(\Pi)$ be such that $ W - \phi$ has a local maximum or a minimum at $\zeta$, then $W'(\zeta) = \phi'(\zeta)$. It follows that the conditions in the previous definition hold, and so the notion of viscosity solution is weaker than the notion of classical solution. 
\end{rem}

\vspace{\baselineskip}

We now proceed with a reduction of problem \eqref{2}. 
By the Krein--Rutman theorem \cite{Brezis2011}, for every \( \epsilon > 0 \), the eigenfunction \( u_\epsilon \) can be chosen to be positive. 
Therefore, we may set
\begin{equation*}
    u_\epsilon(x) = \exp\left( -\frac{W_\epsilon(x)}{\epsilon} \right),
\end{equation*}
where \( W_\epsilon \) is a twice continuously differentiable function on \( \Pi \). Injecting this expression into \eqref{2} and setting 
\[
H(p,x) := a(x)p^2 - b(x)p,
\]
we obtain that the {\color{red}principal} eigenvalue \( \lambda_\epsilon \) satisfies
\begin{equation}
\label{10}
- a(x) W_\epsilon''(x) + \frac{1}{\epsilon} \, H\bigl( W_\epsilon'(x), x \bigr) + c(x) = \lambda_\epsilon.
\end{equation}

\begin{rem}\label{rem:4.4}
For any constant \( C \), if \( W_\epsilon \) satisfies \eqref{10}, then \( W_\epsilon + C \) also satisfies \eqref{10}. 
Therefore, without loss of generality, in the following, we restrict our analysis to the family of eigenfunctions normalized by the condition
\[
\int_0^1 W_\epsilon(x)\, dx = 0.
\]
\end{rem}

The next step is given by the following proposition. The argument is essentially the one of \cite{PiatnitskiRybalko2015} (Lemmas 3 and 4, pages 257-258), adapted in the case of $\Pi$.

\begin{prop}\label{prop:4.5}\renewcommand{\labelenumi}{\textit{\arabic{enumi}.}}
Under the assumptions stated above, the following two statements hold.
\begin{enumerate}
             \item For any sequence \( \epsilon \to 0 \), there exists a subsequence such that \( \lambda_{\epsilon} \) converges to a limit \( \lambda \) and such that the corresponding eigenfunctions \( W_{\epsilon} \) converge uniformly on \( \Pi \) to a limit function \( W \).
             \item In addition $W$ is a viscosity solution of $H(W'(x), x) = 0$ on $\Pi$.
\end{enumerate}
\end{prop}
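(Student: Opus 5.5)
Since $\min_\Pi c\le\lambda_\epsilon\le\max_\Pi c$ (evaluate the eigenvalue equation at the extremum points of $u_\epsilon$, exactly as in the proof of Proposition \ref{prop:2.1}), the family $(\lambda_\epsilon)$ is bounded. Along any sequence $\epsilon\to0$ we may therefore extract a subsequence with $\lambda_\epsilon\to\lambda$. For the eigenfunctions, the plan is a uniform Lipschitz bound on $W_\epsilon$ (the normalization of Remark \ref{rem:4.4}) obtained by a Bernstein-type maximum principle argument on \eqref{10}. Arzel\`a--Ascoli then gives uniform convergence of a further subsequence. The normalization $\int_0^1 W_\epsilon=0$, together with the Lipschitz bound, gives $\|W_\epsilon\|_\infty\le \|W_\epsilon'\|_\infty$, so uniform boundedness comes for free.

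\textbf{Lipschitz bound.} Set $p_\epsilon=W_\epsilon'$. This is the step I expect to be the main obstacle.
\begin{itemize}
\item First try: look at a point $x^\ast$ where $p_\epsilon$ attains its maximum or minimum. There $p_\epsilon'(x^\ast)=W_\epsilon''(x^\ast)=0$ is not directly available, so \eqref{10} alone does not give it.
\item Instead, differentiate \eqref{10}:
\[
-a p_\epsilon''-a'p_\epsilon'+\tfrac1\epsilon\bigl(2ap_\epsilon p_\epsilon'+a'p_\epsilon^2-b'p_\epsilon-bp_\epsilon'\bigr)+c'=0 .
\]
This requires $c\in C^1$; since $c$ is only continuous, I would first approximate $c$ uniformly by smooth functions and use the Lipschitz continuity of $\lambda_\epsilon$ in $c$ in sup norm.
\item A cleaner alternative avoids differentiating. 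Rewrite \eqref{10} as
\[
a p_\epsilon^2-bp_\epsilon=\epsilon\bigl(a p_\epsilon'+\lambda_\epsilon-c\bigr).
\]
At an extremum $x^\ast$ of $p_\epsilon$ we have $p_\epsilon'(x^\ast)=0$, hence
\[
a p_\epsilon^2-bp_\epsilon=\epsilon(\lambda_\epsilon-c)=O(\epsilon),
\]
which forces $|p_\epsilon(x^\ast)|\le \|b\|_\infty/\min a+O(\sqrt\epsilon)$.
\end{itemize}
Since the max and min of $p_\epsilon$ are both attained at such points, this gives the uniform bound $\|W_\epsilon'\|_\infty\le C$. Here $W_\epsilon\in C^3$ is needed for $p_\epsilon'$ to make sense classically; the solution $u_\epsilon$ is $C^2$, and by bootstrapping in the ODE $p_\epsilon$ is $C^1$ when $a,b,c$ are continuous, which suffices. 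This completes statement 1.

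\textbf{Viscosity property.} For statement 2, I would use the standard stability argument for viscosity solutions. Let $\phi\in C^\infty$ with $W-\phi$ having a strict local maximum at $\zeta$; the non-strict case reduces to this by adding $|x-\zeta|^4$. By uniform convergence, $W_\epsilon-\phi$ has a local maximum at some $\zeta_\epsilon\to\zeta$, where
\[
W_\epsilon'(\zeta_\epsilon)=\phi'(\zeta_\epsilon),\qquad W_\epsilon''(\zeta_\epsilon)\le\phi''(\zeta_\epsilon).
\]
Multiplying \eqref{10} by $\epsilon$ at $\zeta_\epsilon$ gives
\[
H(\phi'(\zeta_\epsilon),\zeta_\epsilon)=\epsilon\bigl(aW_\epsilon''+\lambda_\epsilon-c\bigr)(\zeta_\epsilon)\le \epsilon\bigl(a\phi''+\lambda_\epsilon-c\bigr)(\zeta_\epsilon),
\]
using $a>0$. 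Letting $\epsilon\to0$ yields $H(\phi'(\zeta),\zeta)\le0$. The minimum case is symmetric and gives $\ge 0$. Note that the constant $\lambda_\epsilon-c$ disappears in the limit because it is multiplied by $\epsilon$ and bounded; this is why the limit equation is $H(W',x)=0$ and not an equation involving $\lambda$.
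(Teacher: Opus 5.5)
Your proposal is correct and follows essentially the paper's proof. Both arguments bound $\lambda_\epsilon$ via extrema, obtain the Lipschitz bound by evaluating \eqref{10}, multiplied by $\epsilon$, at a point where $W_\epsilon''=0$ (the paper takes a maximum point of $(W_\epsilon')^2$, you take the extrema of $W_\epsilon'$), apply Arzel\`a--Ascoli, and then run the standard stability argument via Lemma~\ref{lem:4.6}; your $|x-\zeta|^4$ perturbation is a tidier version of the paper's approximating $\phi_n$. Your first two bullets are unnecessary, and the worry in the first is unfounded: since $u_\epsilon\in C^2$ is positive, $W_\epsilon\in C^2$ directly, so $W_\epsilon''$ vanishes at any extremum of $W_\epsilon'$ on $\Pi$, with no differentiation of \eqref{10}, no $C^3$ regularity and no bootstrapping needed.
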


\begin{proof}[Proof of 1. of Proposition \ref{prop:4.5}]
    We first show that $(\lambda_\epsilon)_\epsilon$ is bounded. Indeed, using the maximum and minimum points of the twice differentiable function $W_\epsilon$, we establish using \eqref{10} that for any $\epsilon > 0$, 
    \begin{equation}
    \label{11}
 \min\limits_{x \,\in\, \Pi} c(x) \leq \lambda_\epsilon \leq \max\limits_{x\, \in \,\Pi} c(x).
    \end{equation}

Now let us prove that, up to extracting a subsequence, $(W_\epsilon)_\epsilon$ converges uniformly on $\Pi$ to a limit $W$. Using the Ascoli Arzela theorem, it's enough to show that the family $(W_\epsilon)_\epsilon$ is uniformly bounded in $W_{1,\infty}(\Pi)$. As $\Pi$ is compact, it suffices to establish that $\mid\mid W'_\epsilon \mid\mid_\infty$ is uniformly bounded for $\epsilon \leq 1$.

\vspace{\baselineskip}

Fix any \(\epsilon \in (0,1]\) and let \(W_\epsilon\) satisfy \eqref{10}, normalized by \(\int_0^1 W_\epsilon\,dx=0\).
If \(W_\epsilon\) is null, there is nothing to prove. Hence, in what follows, we assume that \(W_\epsilon\) is not identically zero.

\vspace{\baselineskip}

As $W_\epsilon$ is twice differentiable, there exists $\zeta \in \Pi$, a maximum point of $(W'_\epsilon)^2$, such that $W'_\epsilon(\zeta) \ne 0$ (if not $W'_\epsilon(\zeta) = 0$ and the function $W'_\epsilon$ is constant equal to 0, which contradicts our hypothesis).
Differentiating we get $W''_\epsilon(\zeta) = 0$. Finally multiplying \eqref{10} by $\epsilon$ and taking the equality at point $\zeta$, we get
\begin{align}
\label{12}
a(\zeta) (W'_\epsilon)^2(\zeta)
= \epsilon[\lambda_\epsilon - c(\zeta)] + b(\zeta)W'_\epsilon(\zeta)
\leq [\max\limits c(x) - \min\limits c(x)] + CW'_\epsilon(\zeta).
\end{align}

The uniform boundlessness of $ (W'_\epsilon)^2(\zeta) $ follows directly from the previous inequality. We conclude the proof of $\textit1.$ noting that $\mid\mid W'_\epsilon \mid\mid_\infty = \sqrt{(W'_\epsilon)^2(\zeta)}$.
\end{proof}

Before starting the proof of $\textit2.$ we set the following lemma. 

\begin{lem}\label{lem:4.6}
    Let $(\psi_n)_n$ be a sequence of continuous functions converging uniformly to $\psi$ on $\Pi$. Suppose that x $\in \Pi$ is a strict local maximum (respectively, strict local minimum) point of $\psi$, then there exists a sequence $(x_n)_n$ converging to $x$ such that for $n$ large enough, $x_n$ will be a local maximum (respectively, local minimum) point of $\psi_n$.
\end{lem}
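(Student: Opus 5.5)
We treat the case of a strict local maximum; the strict local minimum case follows by applying the same argument to $-\psi_n$ and $-\psi$. The plan is a standard compactness argument on a small closed arc around $x$. Since $x$ is a strict local maximum of $\psi$, there is $r_0>0$ such that $\psi(y)<\psi(x)$ for all $y$ with $0<|y-x|\le r_0$. The arc $\bar B(x,r_0)$ is compact in $\Pi$ and each $\psi_n$ is continuous, so we choose $x_n\in \bar B(x,r_0)$ with
\[
\psi_n(x_n)=\max_{\bar B(x,r_0)}\psi_n .
\]

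The main step is to show that $x_n\to x$. Fix $0<r\le r_0$ and set
\[
\eta_r:=\psi(x)-\max_{r\le |y-x|\le r_0}\psi(y),
\]
which is $>0$ by strictness and compactness of the annulus. By uniform convergence, for $n$ large we have $\|\psi_n-\psi\|_\infty<\eta_r/2$. Then for every $y$ with $r\le|y-x|\le r_0$,
\[
\psi_n(y)<\psi(y)+\eta_r/2\le \psi(x)-\eta_r/2<\psi_n(x).
\]
Hence the maximizer $x_n$ cannot lie in that annulus, so $|x_n-x|<r$. Since $r$ is arbitrary, $x_n\to x$.

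Finally, once $|x_n-x|<r_0$, the point $x_n$ lies in the open arc $B(x,r_0)$. It maximizes $\psi_n$ over the neighborhood $\bar B(x,r_0)$ of $x_n$, so it is a local maximum of $\psi_n$. For the finitely many early indices where this may fail, $x_n$ is defined arbitrarily; this does not affect the limit or the conclusion for $n$ large. The only delicate point is the positivity of $\eta_r$, which is exactly where strictness of the extremum and compactness of the annulus are used. Without strictness the maximizers could drift away from $x$, for example along a plateau.
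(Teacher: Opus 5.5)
Your proof is correct and follows essentially the same route as the paper: restrict to a small arc on which $x$ is the unique maximizer of $\psi$, take $x_n$ to be a maximizer of $\psi_n$ on that arc, and use strictness to force $x_n\to x$. The paper identifies only subsequential limits, by passing to the limit in $\psi_n(x_n)\ge\psi_n(0)$, and that inequality tacitly requires $x_n$ to be the maximizer over the arc. Your uniform gap $\eta_r$ makes both points explicit and gives convergence of the whole sequence directly.
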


\begin{proof}
First of all, note that the result for a strict local minimum point follows directly from the result of the strict local maximum point, just by changing the sign of $(\psi_n)_n$ and $\psi$. Therefore, we will establish the result in the case of the strict local maximum point. 

\vspace{\baselineskip}

Without loss of generality, we assume that $0$ is a strict local maximum point of $\psi$. Then, there exists $r>0$ small enough such that $\psi$ restricted to $B(0,r)$ attains its unique maximum at $0$. Moreover, since $(\psi_n)_n$ converges uniformly to $\psi$, for $n$ sufficiently large there exists a point $x_n \in B(0,r/2)$ at which $\psi_n$ attains a local maximum. Extracting a subsequence, we assume that $(x_n)_n$ converges to a point $x_0 \in B(0,r)$. Then by hypothesis
\begin{align*}
\mid \psi_n(x_n) - \psi(x_0)\mid
&\leq \, \mid \psi_n(x_n) - \psi(x_n)\mid + \mid \psi(x_n) - \psi(x_0)\mid \, \rightarrow 0.
\end{align*}

Passing to the limit in $ \psi_n(x_n) \ge \psi_n(0) $ we get $ \psi(x_0) \ge \psi(0) $. Finally, since $r$ has been chosen sufficiently small so that $0$ is the unique global maximizer of $\psi$ on $B(0,r)$, the result follows.
\end{proof}

\begin{proof}[Proof of 2. of Proposition~\ref{prop:4.5}]
Let $\phi \in C^{\infty}(\Pi)$ such that $ W - \phi$ has a local maximum at $\zeta$. First, suppose that it is a strict local maximum. It follows from Lemma \ref{lem:4.6} that for any $\epsilon > 0$ there exists $\zeta_\epsilon$ a maximum point of $W_\epsilon - \phi$ such that $(\zeta_\epsilon)_\epsilon$ converges to $\zeta$. Using $W_\epsilon'(\zeta_\epsilon) =  \phi'(\zeta_\epsilon)$ and the inequality $W_\epsilon''(\zeta_\epsilon) \leq\phi''(\zeta_\epsilon)$ we get
\begin{align*}
& -\epsilon  \, a(\zeta_\epsilon) \phi''(\zeta_\epsilon)  + H( \phi'(\zeta_\epsilon),\zeta_\epsilon) + \epsilon  \,c(\zeta_\epsilon)\\
&\leq -\epsilon \, a(\zeta_\epsilon) W_\epsilon''(\zeta_\epsilon) + H(W_\epsilon'(\zeta_\epsilon),\zeta_\epsilon) + \epsilon  \,c(\zeta_\epsilon)  \\
&= \epsilon \lambda_\epsilon.
\end{align*}

Passing to the limit as $\epsilon$ goes to $0$ in the last inequality and considering the subsequence extracted in $\textit1.$ we get $H(\phi'(\zeta),\zeta) \leq 0$. 

\vspace{\baselineskip}

Now suppose that the local maximum point $\zeta$ is no longer strict. Then, there exists $(\phi_n)_n \in C^\infty(\Pi)$ such that $W - \phi_n$ has a strict local maximum at $\zeta$ and such that $(\phi_n'(\zeta))_n$ converges to $\phi'(\zeta)$. Repeating the above argument we get $H(\phi_n'(\zeta),\zeta) \leq 0$ and then $H(\phi'(\zeta),\zeta) \leq 0$ passing to the limit using the continuity of $H$. If $W - \phi$ has a local minimum at $\zeta$, the same calculation yields the opposite inequality, which concludes the proof.
\end{proof}

\subsubsection{The representation formula}\label{sec:4.1.2}

Since we have established that \(W\) is a viscosity solution of \(H(W'(x),x)=0\) on \(\Pi\), we can refine its description through the following definition, propositions and lemmas.

\begin{defn}\label{defn:4.7}
For any $x,z\in\Pi$, we define the Mañé potential associated with the Hamiltonian $H$ by
\begin{equation*}
d_H(z,x) = \inf\limits_{t > 0, \, \eta\in S_z^x(t)} \int_0^t L(-\eta'(s),\eta(s))ds
\end{equation*}
with 
\begin{equation*}
L(v,z)=\sup_{p\,\in\,\mathbb R}\lbrace vp-H(p,z)\rbrace 
=\sup_{p\,\in\,\mathbb R}\lbrace(v+b(z))p-a(z)p^{2}\rbrace
=\frac{\left(v+b(z)\right)^{2}}{4\,a(z)}
\end{equation*}
\begin{equation}
\label{13}
    \text{and} \,\,\, S_z^x(t) = \left\{\eta \in C^1_p([0,t]), \eta(0) = z ; \, \eta(t) = x\right\},
\end{equation}
where $C^1_p([0,t])$ denotes the class of piecewise $C^1$ curves on $[0,t]$.
\end{defn}

\begin{prop}\label{prop:4.8}
    Under the assumptions stated above, $W$ admits the following representation:
\begin{equation}
\label{14}
    W(z) = \min\limits_{x, \,b(x)=0 } (d_H(z,x) + W(x))
\end{equation}
\end{prop}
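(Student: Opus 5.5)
The plan is to prove the two inequalities in \eqref{14} separately. The inequality $W(z)\le \min_{b(x)=0}(d_H(z,x)+W(x))$ will come from the subsolution property. The reverse inequality will come from building, for each $z$, a calibrated curve joining $z$ to a zero of $b$, using the supersolution property. Throughout I use that $W$ is Lipschitz, since by Proposition~\ref{prop:4.5} it is a uniform limit of functions with $\|W_\epsilon'\|_\infty$ uniformly bounded. I also use that $H(p,x)=a(x)p^2-b(x)p$ is strictly convex in $p$.

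\emph{Upper bound.} A Lipschitz viscosity subsolution of a convex Hamilton--Jacobi equation is an a.e.\ subsolution, so $H(W'(y),y)\le 0$ for a.e.\ $y$. By Fenchel's inequality, $W'(y)v\le L(v,y)+H(W'(y),y)\le L(v,y)$ a.e.
\begin{itemize}
\item Take $\eta\in S_z^x(t)$ and its reversal $\gamma(s)=\eta(t-s)$, which runs from $x$ to $z$ with $\gamma'=-\eta'$.
\item First assume $W$ is differentiable a.e.\ along $\gamma$. This holds after a small perturbation of $\gamma$; alternatively, mollify $W$, which preserves approximate subsolutions by convexity.
\item Then $W(z)-W(x)=\int_0^t W'(\gamma)\gamma'\,ds\le \int_0^t L(-\eta'(s),\eta(s))\,ds$.
\item Taking the infimum over $t$ and $\eta$ gives $W(z)\le d_H(z,x)+W(x)$ for every $x$, in particular for every zero of $b$.
\end{itemize}

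\emph{Lower bound.} If $b(z)=0$, take $x=z$ and use $d_H(z,z)=0$: the constant curve has cost $t\,b(z)^2/(4a(z))=0$. So let $z\in(x_i,x_{i+1})$, an interval between consecutive zeros on which $b$ has a constant sign.
\begin{itemize}
\item Since $W$ is a viscosity solution and Lipschitz, $H(W',y)=0$ a.e., so $W'(y)\in\{0,\,b(y)/a(y)\}$ a.e.
\item On this interval the two roots are distinct, so $W'$ can only switch between $0$ and $b/a$.
\item The supersolution condition at a corner forbids concave-type kinks: at a local minimum of $W-\phi$ one would need $H(p,y)\ge0$ for all $p$ between the one-sided derivatives, which fails strictly inside the root interval.
\item Hence $W'$ switches at most once on the interval, and only in the admissible (convex-kink) direction. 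Call a piece of $(x_i,x_{i+1})$ extremal if it has an endpoint at $x_i$ or $x_{i+1}$. Every point $z$ then lies on an extremal piece where $W'\equiv 0$ or $W'\equiv b/a$.
\end{itemize}

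On that piece I define a backward characteristic by $-\eta'=2aW'-b$. This is the velocity attaining equality in Fenchel's inequality, since $L(v,y)=vp-H(p,y)$ exactly when $v=2ap-b$. Concretely, $\eta'=b$ if $W'=0$ and $\eta'=-b$ if $W'=b/a$; the sign is chosen so that $\eta$ moves monotonically from $z$ toward the zero $x_j$ bounding the extremal piece. Along this curve equality holds in the upper-bound computation. Hence $W(z)-W(\eta(t))=\int_0^t L(-\eta',\eta)\,ds$, so $W(z)\ge d_H(z,\eta(t))+W(\eta(t))$.

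The main obstacle is that $\eta(t)\to x_j$ only as $t\to\infty$, especially at zeros of high order or flat zeros, where convergence is very slow. I will therefore pass to the limit using two facts: $d_H(z,\cdot)$ is continuous, via the triangle inequality and short straight curves of bounded cost, and $W$ is continuous. This yields $W(z)\ge d_H(z,x_j)+W(x_j)$.

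Behind this lies a second difficulty: checking carefully that the sign constraints coming from the supersolution property indeed send the characteristic toward a zero of $b$ rather than toward an interior corner. I expect this sign bookkeeping, together with the non-hyperbolic limit at degenerate zeros, to be the delicate part. As a fallback I would invoke the general weak KAM representation \cite{Fathi2008} after identifying the critical value as $0$ and the projected Aubry set as $\{b=0\}$. The critical value is $0$: the constant function is a subsolution since $H(0,\cdot)=0$, and $H(p,x_i)=a(x_i)p^2\ge 0$ at the zeros. The projected Aubry set is $\{b=0\}$: each zero carries a zero-cost constant loop, and any loop through a point with $b\neq0$ has cost bounded away from $0$.
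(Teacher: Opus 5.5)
Your main argument is correct, but it takes a different route from the paper; your fallback is the paper's route. The paper never analyses $W'$. It quotes Fathi's representation $W(z)=\inf_{x\in A_H}\{d_H(z,x)+W(x)\}$, and the whole proof is the identification $A_H=\{b=0\}$: constant curves put the zeros in $A_H$, and a compactness argument on long, nearly free loops excludes points with $b\neq 0$. Your remark that the critical value is $0$ makes explicit a hypothesis the paper uses without comment.

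Your primary route proves both inequalities directly in one dimension. Domination of the Lipschitz subsolution by $L$ gives $W(z)\le d_H(z,x)+W(x)$ for every $x$. Explicit calibrated curves ($\eta'=b$ where $W'=0$, $\eta'=-b$ where $W'=b/a$) give equality in the limit at a zero bounding the piece containing $z$. You handle the infinite-time issue correctly via the Lipschitz continuity of $d_H(z,\cdot)$.

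This route is self-contained and does not import the weak KAM machinery. It does not need the sign change of $b$. It also yields more: between consecutive zeros $W$ has at most one kink, with $W'=b/a$ on one side and $W'=0$ on the other. That is the kind of local structure later exploited in Lemma \ref{lem:4.11}, Lemma \ref{lem:4.13} and Corollary \ref{cor:4.19}. The paper's route is shorter and not tied to one-dimensional ODE structure, but it outsources the substance.

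Two steps need tightening.

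\emph{At most one switch.} Your corner argument only controls isolated switches; a priori $W'$ could alternate between $0$ and $b/a$ on a complicated measurable set. A clean fix on an interval where $b>0$ is to set $G'=b/a$ and $F=W\circ G^{-1}$, so $F'\in\{0,1\}$ a.e. Take $s_k=G(y_k)$ and suppose the chord of $F$ over $[s_1,s_2]$ has slope $\theta\in(0,1)$. Then $F$ cannot dip below the chord. Otherwise $W-\phi$, with $\phi=W(y_1)+\theta\,(G-G(y_1))$, would have an interior minimum where $\phi'=\theta b/a\in(0,b/a)$, so $H(\phi',\cdot)=\theta(\theta-1)b^2/a<0$, contradicting the supersolution property. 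If $\theta\in\{0,1\}$, then $F$ is affine on $[s_1,s_2]$. Hence $F$ is concave and $F'$ drops at most once, from $1$ to $0$; the case $b<0$ is symmetric.

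\emph{Kink direction.} Your labels are reversed. The supersolution test excludes corners at which $W-\phi$ has a local minimum, i.e.\ convex corners with $W'(y^-)<W'(y^+)$; the admissible kinks are concave. With the correct direction, the sign bookkeeping you were worried about closes:
\begin{itemize}
\item For $b>0$: $W'=b/a$ left of the kink and $W'=0$ right of it, so $\eta'=-b$ runs to $x_i$ and $\eta'=b$ runs to $x_{i+1}$.
\item For $b<0$: $W'=0$ on the left and $W'=b/a$ on the right, and again the characteristics leave the kink toward the bounding zeros.
\end{itemize}
If convex kinks were the admissible ones, the characteristics would run into the kink and the lower bound would fail. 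So this step should be stated correctly rather than deferred.
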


\begin{proof}
Using \cite{Fathi2008} (Proposition 5.3.8, page 197; and Theorem 8.6.1, page 255)  since $b \in C^{1}(\Pi)$ and $H$ is convex and superlinear in the second variable, the following general representation of $W$ holds:
\begin{equation}\label{15}
        W(z) = \inf\limits_{x \, \in \, A_H } \lbrace d_H(z,x) + W(x)\rbrace
\end{equation}
    where $A_H$ is called the Aubry set and is defined by
\begin{equation}\label{16}
    A_H := \biggl\lbrace x, \,\,\, \text{for  all} \,\,\,h > 0 \inf\limits_{T \ge h, \, \eta \, \in \, S_x^x(T)} \int_0^T L(-\eta'(s) ,\eta(s)) \, ds = 0
\biggl\rbrace. 
\end{equation}
Moreover, in the case where $b$ has finitely many zeros and changes sign on $\Pi$, one can show that $A_H$ consists exactly of the zeros of $b$, which will end the proof of the proposition. 

\vspace{\baselineskip}

We begin by showing that every zero of \(b\) belongs to the Aubry set. 
Indeed, let \(x\) be such that \(b(x)=0\). Consider the trajectory \(\gamma\) satisfying \(\gamma'=b(\gamma)\) with initial condition \(\gamma(0)=x\).
Since \(x\) is a zero of the field $b$, the solution is stationary, \(\gamma(t)\equiv x\); hence \(\gamma \in S_x^x(h)\) for any $h > 0$ and
\begin{equation*}
    \inf\limits_{ \eta \, \in \, S_x^x(T), \,T \ge h} \int_0^T L(-\eta'(s) ,\eta(s)) \, ds = \int_0^h L\bigl(-\gamma'(s),\gamma(s)\bigr)\,ds \;=\; 0.
\end{equation*}

To complete the proof, we need to show that for any point \(x\) with \(b(x)\neq 0\), \(x\) does not belong to the Aubry set.
We proceed by contradiction, assuming that for every \(h>0\),
\begin{equation*}
    \inf\limits_{ \eta \, \in \, S_x^x(T), \,T \ge h} \int_0^T L(-\eta'(s) ,\eta(s)) \, ds = 0.
\end{equation*}
Therefore there exist curves \(\gamma_n:[0,T_n]\to\Pi\) with \(T_n\to+\infty\) such that
\begin{equation}\label{17}
\lim_{n\to\infty}\int_0^{T_n} \frac{\left(-\gamma_n'+b(\gamma_n)\right)^{2}}{4\,a(\gamma_n)}\,ds=0.
\end{equation}

For any \(T>0\), since \(\Pi\) is compact and \(b\) is bounded, \eqref{17} implies that the restrictions
\(\gamma_n|_{[0,T]}\) are bounded in \(W^{1,2}([0,T])\). Hence, up to a subsequence,
\[
\gamma_n \xrightarrow{\text{uniformly}} \gamma \quad\text{in } C^{0}([0,T]) \,\,\, \text{and }\,\,\, 
\gamma'_n \rightharpoonup \gamma' \quad\text{in } L^{2}([0,T]).
\]
Moreover, passing to the limit in \eqref{17} on \([0,T]\) yields \(\gamma'=b(\gamma)\) a.e. on \([0,T]\). Indeed, expanding \eqref{17} and using the preceding convergences, we obtain 
\begin{align*}
0 &= \lim_{n\to\infty}
\int_0^{T}\frac{(\gamma_n')^2}{4\,a(\gamma_n)}
-\frac{\gamma_n' b(\gamma_n)}{2\,a(\gamma_n)}
+\frac{b(\gamma_n)^2}{4\,a(\gamma_n)}\,ds \\
&\ge \int_0^{T}\frac{(\gamma')^2}{4\,a(\gamma)}
-\frac{\gamma' b(\gamma)}{2\,a(\gamma)}
+\frac{b(\gamma)^2}{4\,a(\gamma)}\,ds = \int_0^{T} \frac{\left(-\gamma'+b(\gamma)\right)^{2}}{4\,a(\gamma)}\,ds \ge 0.
\end{align*}
Moreover, since $\gamma'\in L^{2}([0,T])$, the limit curve $\gamma$ is absolutely continuous and satisfies
\begin{equation*}
    \gamma(t)=\gamma(0)+\int_{0}^{t}\gamma'(s)\,ds
= x + \int_{0}^{t} b(\gamma(s))\,ds \,\,\, \text{for all } \,\,\, t\in[0,T].
\end{equation*}
Thus $\gamma$ is the solution of $\gamma'=b(\gamma)$ on $[0,T]$ with initial condition $\gamma(0)=x$.

\vspace{\baselineskip}

It follows from the dynamics of the system and the uniform convergence of
$(\gamma_n)_n$ to $\gamma$ that there exist a zero $y$ of $b$ and a sufficiently
small $\varepsilon>0$ such that one can find a time $T^{\varepsilon}>0$ and a
rank $N_{\varepsilon}$ for which $
\gamma_n(T^{\varepsilon}) \in B(y,\varepsilon)$
for all $n \ge N_{\varepsilon}$.

\vspace{\baselineskip}

Since $\gamma_n(T_n)=x$, each curve $\gamma_n$ must connect a point in $B(y,\varepsilon)$ to $x$. As there are only two possible directions, there exist $\alpha<\beta$, independent of $n$, such that, up to the extraction of a subsequence, for every $n > N$ one can find times $t_n^1<t_n^2$ such that for all $t\in[t_n^1,t_n^2]$, $\gamma_n(t)\in[\alpha,\beta]$ with $\alpha = \gamma_n(t^1_n)$ and $\beta = \gamma_n(t^2_n)$ and $\operatorname{sign}(\gamma_n'(t))\,\operatorname{sign}(b(\gamma(t)))=-1$. Moreover, one can suppose that $ 0 \leq \delta_1 < \mid b(x) \, \mid \leq \, \delta_2$ for all $x\in[\alpha,\beta]$.

\vspace{\baselineskip}

To prove this, we first define
\[
t_n^\varepsilon := \sup\{\, t \in [0, T_n], \gamma_n(t) \in B(y,\varepsilon) \,\}
\]
and note that either \(\gamma_n(t_n^\varepsilon) = y - \varepsilon\) or
\(\gamma_n(t_n^\varepsilon) = y + \varepsilon\).
Without loss of generality, we assume that all trajectories fall into the latter case and that $y + \varepsilon < x.$ Moreover, we may assume that the trajectories \(\gamma_n\) are strictly increasing on
\([t_n^\varepsilon, T_n]\).
Indeed, if this is not the case, it suffices to observe that one can define a sequence
\(\gamma_n^\ast\) of minimizing trajectories by setting
$m_n(t) := \max_{0 \le s \le t} \gamma_n(s)$ and removing the intervals on which \(m_n\) is constant, reparametrizing time accordingly.
Finally, from the dynamics of the system, there exists an interval \([\alpha,\beta]\)
such that \(b\) is negative on \([\alpha,\beta]\).
Defining \(t_n^1 := \gamma_n^{-1}(\alpha)\) and \(t_n^2 := \gamma_n^{-1}(\beta)\)
yields the result.

\vspace{\baselineskip}

Suppose that, up to a subsequence $\tau_n := t_n^2 - t_n^1$ converges to $0$, we obtain a contradiction as
\begin{equation*}
    \beta -\alpha = \int_{t^1_n}^{t^2_n} \gamma_n'(s)-b(\gamma_n(s)) +b(\gamma_n(s)) ds \leq  \left( \int_{t_n^1}^{t_n^2} (\gamma_n'(s)-b(\gamma_n(s)) )^2 ds \right)^{\frac12}\sqrt{\tau_n}+\delta_2\,\tau_n \rightarrow 0.
\end{equation*}
Assuming instead that the sequence $(\tau_n)_n$ converges to some $\tau>0$, or that it diverges, also yields a contradiction. Indeed, passing to the limit in
\begin{equation*}
\int_{t_n^1}^{t_n^2} \frac{\left(-\gamma_n'(s)+b(\gamma_n(s))\right)^{2}}{4\,a(\gamma_n(s))}\,ds \ge \int_{t_n^1}^{t_n^2} \frac{b(\gamma_n(s))^{2}}{4\,a(\gamma_n(s))}\,ds \ge \frac{\delta_1 \, \tau_n}{4 \,\max a}
\end{equation*}
yields $0 \ge (\delta_1 \, \tau)/(4 \,\max a) > 0$ from which a contradiction follows.
\end{proof}

\begin{rem}\label{rem:4.9}
Since a piecewise differentiable curve from $x$ to $z$ can be written as the concatenation of piecewise differentiable curves from $x$ to $y$ and from $y$ to $z$, $d_H(\cdot,\cdot)$ verifies the triangular inequality. 
\end{rem}

\begin{rem}\label{rem:4.10}
It follows from this positivity of $L$ and from the fact that $b$ changes sign on $\Pi$ that $d_H(x,y)+d_H(y,x) > 0$ holds if and only if $x$ is different from $y$. 

\vspace{\baselineskip}

Indeed suppose that  $d_H(x,y) = d_H(y,x) = 0$. Then, there exist sequences \(t_n>0\), \(s_n>0\) and piecewise \(C^1\) curves
\(\eta^{x,y}_n \in S_x^y(t_n)\) and \(\eta^{y,x}_n \in S_y^x(s_n)\) forming minimizing sequences for \(d_H(x,y)\) and \(d_H(y,x)\), i.e
\begin{equation*}
\lim_{n\to\infty}\int_0^{t_n} L \bigl(-\eta'^{x,y}_n(s),\eta^{x,y}_n(s)\bigr)\,ds \;=\; 0 \,\,\, \text{and} \,\,\,
\lim_{n\to\infty}\int_0^{s_n} L \bigl(-\eta'^{y,x}_n(s),\eta^{y,x}_n(s)\bigr)\,ds \;=\; 0.
\end{equation*}

Define the piecewise \(C^{1}\) curves \(\gamma_n:[0,t_n+s_n]\to\Pi\) by concatenating \(\eta^{x,y}_n\) and \(\eta^{y,x}_n\). It is clear that $\gamma_n \in S_x^x(t_n+s_n)$ and $d_H(x,x) = 0$ as
\begin{equation*}
0 \leq d_H(x,x) \leq \lim_{n\to\infty}\int_{0}^{t_n+s_n} L\!\bigl(-\gamma_n',\gamma_n\bigr)
= d_H(x,y)+d_H(y,x) = 0.
\end{equation*}

Moreover, by taking the \(k\)-fold concatenation of \(\gamma_n\) we obtain
curves \(\eta_{n,k}\in S_x^x(k\,(t_n+s_n))\) such that 
\[
\lim_{n\to\infty} \int_0^{k\,(t_n+s_n)} L\bigl(-\eta'_{n,k}(s),\eta_{n,k}(s)\bigr)\,ds
= k \, d_H(x,x) = 0.
\]
 Since for any $h > 0$, we can take $k$ large enough such that $h < k\,(t_n+s_n)$ we get that $x \in A_H$. This also implies that $y\in A_H$ and more generally, any point that is visited infinitely many times by the sequence of curves \(\gamma_n\) also belongs to \(A_H\). As on \(\Pi\), there are only two possible directions between two distinct points. It follows from the preceding construction that \(A_H\) contains at least one nontrivial interval, which provides us a contradiction, since under our assumptions \(A_H\) consists of finitely many points.

\end{rem}

The following lemma refines the representation by removing the minimum in \eqref{14}. The proof follows the strategy in \cite{PiatnitskiRybalko2015} (Step 1, pages 258-259).

\begin{lem}\label{lem:4.11}
    There exists $x_0$ such that $b(x_0) = 0$ and $U$ a neighborhood of $x_0$ such that $ W(x) = d_H(x,x_0) + W(x_0)$ for all $x \in U$.
\end{lem}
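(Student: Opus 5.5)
Our plan is to pick $x_0$ as a global minimizer of $W$ and to check that, near $x_0$, the zero of $b$ realizing the minimum in \eqref{14} is always $x_0$ itself. By Proposition \ref{prop:4.8},
\[
W(z)=\min_{x,\,b(x)=0}\bigl(d_H(z,x)+W(x)\bigr).
\]
Since $d_H\ge 0$ (because $L\ge 0$), we get $W(z)\ge \min_{b(x)=0}W(x)$ for every $z$. Hence the global minimum of $W$ on $\Pi$ is attained at some zero $x_0$ of $b$.

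The key steps, in order, are as follows.

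\textbf{Step 1.} Fix the minimizer $x_0$ as above. This gives $W(x_0)\le W(x_i)$ for every other zero $x_i$.

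\textbf{Step 2.} Establish continuity of $d_H(\cdot,x)$. We use the triangle inequality of Remark \ref{rem:4.9}, together with the local estimate $d_H(z,z')\le C|z-z'|$. For this estimate we take a straight segment traversed at unit speed, so that $\int L\le \sup\frac{(1+|b|)^2}{4a}\,|z-z'|$. It follows that $d_H(z,x_i)\to d_H(x_0,x_i)$ as $z\to x_0$, and likewise $d_H(z,x_0)\to d_H(x_0,x_0)=0$. The last equality holds because the constant curve at $x_0$ has zero cost, as in the proof of Proposition \ref{prop:4.8}.

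\textbf{Step 3.} Prove a strict inequality at $x_0$: for each zero $x_i\neq x_0$,
\[
d_H(x_0,x_i)+W(x_i)>W(x_0).
\]
The bound $\ge$ is immediate from \eqref{14}, since $W(x_0)\le d_H(x_0,x_i)+W(x_i)$. If equality held, then $d_H(x_0,x_i)=W(x_0)-W(x_i)\le 0$. Combined with $d_H\ge 0$, this forces $d_H(x_0,x_i)=0$ and $W(x_i)=W(x_0)$. So $x_i$ is also a global minimizer, and applying \eqref{14} at $x_i$ with $x_0$ in the minimum gives the symmetric relation. It therefore remains to exclude the case $d_H(x_0,x_i)=0$ with $W(x_i)=W(x_0)$.

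We expect this last exclusion to be the main obstacle. Remark \ref{rem:4.10} only gives $d_H(x_0,x_i)+d_H(x_i,x_0)>0$; it does not rule out that one direction costs zero. The remedy is to choose $x_0$ more carefully. Among the finitely many zeros attaining $\min W$, consider the relation $x\to y$ defined by $d_H(x,y)=0$. By the triangle inequality of Remark \ref{rem:4.9} this relation is transitive, and by Remark \ref{rem:4.10} it has no 2-cycles between distinct points. On a finite set it is therefore a strict partial order, so it has a minimal element: a zero $x_0$ with $d_H(x_0,x_i)>0$ for every other minimizing zero $x_i$. With this choice of $x_0$ the strict inequality of Step 3 holds for all $x_i\neq x_0$. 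For non-minimizing zeros it is automatic, since there $W(x_i)>W(x_0)$.

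\textbf{Step 4.} Conclude by continuity. Using Step 2 and the finiteness of the zero set, the strict inequalities of Step 3 persist for $z$ in a neighbourhood $U$ of $x_0$:
\[
d_H(z,x_i)+W(x_i)>d_H(z,x_0)+W(x_0)\qquad\text{for all } z\in U,\ x_i\neq x_0.
\]
Hence the minimum in \eqref{14} is attained at $x_0$ on $U$, which gives $W(z)=d_H(z,x_0)+W(x_0)$ there.
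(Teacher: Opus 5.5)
Your proof is correct and is essentially the paper's argument. As in the paper, you choose a minimal element of an order on the finite zero set of $b$, with transitivity from the triangle inequality of Remark~\ref{rem:4.9} and antisymmetry from Remark~\ref{rem:4.10}, and then use continuity of $d_H(\cdot,x_i)$ to pass to a neighbourhood of $x_0$.

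The differences are minor. You first restrict to global minimizers of $W$; on that set the paper's relation $W(y)=d_H(y,x)+W(x)$ reduces to $d_H(y,x)=0$. You then conclude directly from the persistence of finitely many strict inequalities, rather than by contradiction along a sequence. This avoids needing continuity of $W$, and your Lipschitz bound on $d_H$ supplies the continuity that the paper only invokes.
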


\begin{proof}
    We define the relation
\(\preceq\) on $\{ x_1, \dots, x_n \}$, the set of the zeros of $b$, by
\[
x \preceq y \quad \Longleftrightarrow \quad W(y)=d_H(y,x)+W(x).
\]
    The previous remarks allow us to prove that the relation \(\preceq\) defines a partial order. The reflexivity is quickly checked as $d_H(x,x) =0$. The transitivity follows from Remark \ref{rem:4.9}, as $d_H(\cdot,\cdot)$ verifies the triangle inequality. Finally, the antisymetry is true due to Remark \ref{rem:4.10}. as $x \preceq y$ and $y \preceq x$ implies $d_H(x,y)+d_H(y,x) = 0$. 
    
\vspace{\baselineskip}    
    
    As there exists at least one minimal element associated with that partial order relation, we pick up one, and we note it $x_0$. By definition, for any $y$ such that $b(y)=0$, either $x_0 \preceq y$, or $x_0$ and $y$ are not comparable. Finally, we remark that we have constructed our candidate.
    
\vspace{\baselineskip}

Indeed, suppose that there exists no neighborhood of $x_0$ such that $W(x) = d_H(x,x_0) + W(x_0)$ in that neighborhood. Then, there exist a sequence $(x_n)_n$ converging to $x_0$ and a sequence of elements $X_n \in \lbrace x, b(x)=0\rbrace \setminus \lbrace x_0\rbrace$, such that for each $n$, $W(x_n) = d_H(x_n,X_n) + W(X_n)$. However, up to extracting a subsequence using the fact that the set $\lbrace x, b(x)=0\rbrace \setminus \lbrace x_0\rbrace$ is finite, we can construct a sequence $(x_n)_n$ converging to $x_0$ and an element $X$ different from $x_0$ such that for each $n$, $W(x_n) = d_H(x_n, X) + W(X)$. Passing to the limit, using the continuity of $d_H$, we get $W(x_0) = d_H(x_0, X) + W(X)$. As we have constructed $X \neq x_0$ such that $ X \preceq x_0$, we get a contradiction.
\end{proof}

\begin{rem}\label{rem:4.12}
A zero \( x_0 \) of \( b \) for which the previous lemma applies will be referred to as a significant component. Moreover, whenever such a point is considered in what follows, we will, without loss of generality, up to adding a constant to \(W\) and a translation if necessary, always assume that \(W(x_0)=0\) and \(x_0=0\).
\end{rem}

\vspace{\baselineskip}

Before beginning the proof of the main theorem, we set the following lemma, which will be useful in its proof. By abuse of notation, we write \( \eta \in B(0,\delta) \) if, for all \( t \) for which \( \eta \) is defined, we have \( \eta(t) \in B(0,\delta) \).

\begin{lem}\label{lem:4.13}
Let \( x_0 \) be a zero of $b$. Then, for any sufficiently small \( \delta > 0 \), and for any point \( x \in B(x_0, \delta) \), the piecewise \( C^1 \) curves minimizing \( d_H(x, x_0) \) remain entirely within the ball \( B(x_0, \delta) \). In other words
\begin{equation*}
d_H(x,x_0) = \inf_{\substack{t \,>\, 0,\, \eta\, \in\, S_x^{x_0}(t) \\ \eta \, \in \, B(x_0, \delta)}} 
\int_0^t L(-\eta'(s), \eta(s))\, ds.
\end{equation*}
\end{lem}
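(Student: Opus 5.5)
The proof will be a cut-and-paste argument on almost-minimizing curves, combined with a lower bound on the cost of curves that go the long way around $\Pi$. Write $x_0=0$, and choose $\delta_0>0$ so small that $0$ is the only zero of $b$ in $\overline{B(0,\delta_0)}$. Then $b$ has constant sign on $(0,\delta_0]$ and on $[-\delta_0,0)$.

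First we remove the tail after the first arrival at $0$. Fix $\delta<\delta_0$, $x\in B(0,\delta)$ with $x\neq 0$ (the case $x=0$ is trivial), say $x>0$, and take $\eta\in S_x^{0}(t)$ with cost close to $d_H(x,0)$. Let $T_0$ be the first time $\eta$ hits $0$. Since $L\ge 0$, the restriction $\eta|_{[0,T_0]}$ is again admissible and costs no more, so we may assume $\eta$ avoids $0$ on $[0,T_0)$. Lift $\eta$ to $\mathbb R$ through the cut circle $\Pi\setminus\{0\}\simeq(0,1)$. Two cases remain: either $\eta(T_0)$ corresponds to the endpoint $0$ (arrival from the right), or to the endpoint $1$ (the curve went the long way around).

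In the first case we excise the excursions. If $\eta$ never reaches $\delta$ it already stays in $B(0,\delta)$. Otherwise let $\tau_1$ be the first time and $\tau_2$ the last time before $T_0$ with $\eta=\delta$. Deleting $[\tau_1,\tau_2]$ and concatenating (with a time shift) gives a piecewise $C^1$ curve in $S_x^0(T_0-\tau_2+\tau_1)$ that lies in $(0,\delta]$. Its cost is not larger, again because $L\ge 0$. A boundary point $\pm\delta$ can be pushed inside by an arbitrarily small modification, or the statement can be read on the closed ball. Taking the infimum yields the claimed identity for such curves.

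The second case is the main obstacle, and I will rule it out by a cost comparison. For the upper bound, the straight segment from $x$ to $0$ at unit speed costs at most
\[
|x|\,\frac{(1+\|b\|_\infty)^2}{4\min a}\le C\delta ,
\]
so $d_H(x,0)\le C\delta$. For the lower bound, since $b$ changes sign on $\Pi$, there is a fixed interval $[\alpha,\beta]\subset\Pi\setminus\overline{B(0,\delta_0)}$ on which $b$ has sign opposite to the direction of travel of a curve going the long way around from $(0,\delta)$ to $0$. Such a curve must cross $[\alpha,\beta]$. On the crossing times $\eta' b\le 0$, and the inequality $(u+v)^2\ge 4uv$ for $u,v\ge 0$ gives
\[
\int \frac{(-\eta'+b(\eta))^2}{4a(\eta)}\,ds\;\ge\;\int \frac{|\eta'|\,|b(\eta)|}{a(\eta)}\,ds\;\ge\;\int_\alpha^\beta \frac{|b(y)|}{a(y)}\,dy=:\kappa>0 .
\]
The constant $\kappa$ does not depend on $\delta$. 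Hence for $\delta$ small enough that $C\delta<\kappa$, long-way curves cannot be almost-minimizing, and they can be discarded from the infimum. The case $x<0$ is symmetric. The points I expect to need the most care are the choice of $[\alpha,\beta]$ for each of the two orientations and the bookkeeping of the lift.
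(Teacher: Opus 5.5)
Your proof is correct, but its key step differs from the paper's.

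\textbf{The paper's route.} The paper splits according to the local dynamics at $x_0$. On a side where the flow of $b$ points toward $x_0$, it observes that $d_H(x,x_0)=0$, realised by following the field inside the ball. On a repulsive side, it picks $y_0$ with $d_H(y_0,x_0)=c>0$; this positivity comes from Remark~\ref{rem:4.10}, hence from the identification of the Aubry set. It then uses continuity of $d_H(\cdot,x_0)$, via curves following $-b$, to get $d_H(x,x_0)<c$ for $x$ near $x_0$, and concludes that almost-minimizers never pass $y_0$. Finally it states informally that excursions out of $(0,\delta]$ only add cost.

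\textbf{Your route.} You split instead by the side from which the curve first reaches $x_0$. Your first/last hitting-time excision is a precise version of the paper's last sentence, and it works at any level. The long-way curves are excluded by the explicit barrier $\kappa=\int_\alpha^\beta |b|/a\,dy$, compared against the explicit bound $d_H(x,x_0)\le C|x|$.

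\textbf{What each buys.} Your argument is self-contained: it uses neither Remark~\ref{rem:4.10} nor continuity of $d_H$. It needs no case distinction on the type of zero, and it gives an explicit threshold $\delta<\min(\delta_0,\kappa/C)$. It also shows exactly where the sign change of $b$ enters: without it, curves going around with the flow are cheaper and the statement fails. The paper's argument stays phrased in terms of $d_H$. It also records directly the identity $d_H(x,x_0)=0$ on attracting sides, which reappears in Propositions~\ref{prop:4.14} and~\ref{prop:4.18}.

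\textbf{One detail to write out.} A curve may move back and forth inside $[\alpha,\beta]$, so the phrase ``on the crossing times'' should be made precise. The clean way is to note that $(u+v)^2\ge 4uv$ holds for all real $u,v$. This gives
$L(-\eta',\eta)\ge \max\bigl(0,-\eta'b(\eta)/a(\eta)\bigr)\ge \eta'\,g(\eta)$, with $g=(|b|/a)\,\mathbf{1}_{[\alpha,\beta]}$ for rightward travel. Integrating the exact derivative of a primitive of $g$ along the lift then yields $\int L\,ds\ge\kappa$. The boundary issue also disappears if you excise at a level $\delta'\in(|x|,\delta)$ instead of at $\delta$.
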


\begin{proof}
 Without loss of generality, we assume that $x_0 = 0$. 
If $b$ is positive in a small left neighborhood of $0$ or negative in a small right neighborhood, then, by considering the trajectories that follow the vector field $b$, we obtain the existence of a sufficiently small $\delta > 0$ such that, for any $x \in B(x_0, \delta)$, the piecewise $C^1$ curves minimizing $d_H(x, x_0)$ remain entirely within $B(x_0, \delta)$, and satisfy $d_H(x, x_0) = 0$.

\vspace{\baselineskip}

We now suppose that $b$ is positive in a small right neighborhood of $0$, which corresponds to the case where the field is repulsive from this side. Let us set $y_0 > 0$ small enough such that $d_H(y_0,0) = c > 0$. Such a point should exist otherwise; if there exists $y_0$ small enough such that $d_H(y,0) = 0$, then as $d_H(0,y) = 0$ a contradiction follows. By continuity of $d_H$, for any $\delta \in (0, y_0)$ small enough, all $x \in (0,\delta]$ verifies $d_H(x,0) < c$ (it suffices to consider the curves following the field $-b$ and staying in $(0,\delta]$). Thus, for any $x \in (0,\delta]$, the trajectory minimizing $d_H(x, 0)$ can never leave the interval $(0, y_0]$, otherwise it will lead us to the contradiction $d_H(x, 0) \ge d_H(y_0, 0) = c$.

\vspace{\baselineskip}

We have proved that for any point $x \in (0,\delta]$, the sequence of curves minimizing $d_H(x, 0)$ never goes over the point $y_0$. Finally, it is clear that, in the limit, any curves of the minimizing sequence starting in $(0,\delta]$, stay in $(0,\delta]$, as they all reach $0$ by the right, and adding another path that makes them go out of $(0,\delta]$ should only increase its work over a longer trajectory.
\end{proof}

\subsubsection{The case of zeros of finite order}\label{sec:4.1.3}

The following subsection generalizes the arguments of Section 5 in \cite{Fathi2008} (pages 261-263) to the case of zeros of finite order, and yields us the limit arguing as in \cite{Fathi2008} (Step II and Step III, pages 259-261).

\begin{prop}\label{prop:4.14}
Let $x_0$ be a significant component which is a zero of finite order $m \ge 1$.  
Define $\sigma(x_0) := 0 \,\vee\, b'(x_0)$ and $
\Lambda(x_0) := -\sigma(x_0) + c(x_0).$
Then, for every sufficiently small $\delta > 0$, there exist a neighborhood $U_\delta$ of $x_0$ and functions $W_\delta^\pm \in C^2(U_\delta)$ such that
\begin{equation}
\label{18}
W_\delta^\pm(x_0) = 0,
\,\,\,\,\,
W_\delta^-(x) < W(x) - W(x_0) < W_\delta^+(x)
\,\,\,\,\, \text{for all } \,\,\,\,\, x \in U_\delta \setminus \{x_0\}.
\end{equation}
Moreover, for any sequence $(x_\varepsilon)_{\varepsilon>0}$ converging to $x_0$ as $\varepsilon \to 0$, we have
\begin{equation*} \liminf\limits_{\delta \rightarrow 0} \liminf \limits_{\epsilon \rightarrow 0} \left( -a(x_\epsilon) \, W_\delta^{+\,\prime\prime}(x_\epsilon) + \frac{1}{\epsilon} \, H\big(W_\delta^{+\,\prime}(x_\epsilon), x_\epsilon\big) + c(x_\epsilon) \right) \ge \Lambda(x_0), \end{equation*} \begin{equation*} \limsup_{\delta \to 0} \limsup \limits_{\epsilon \rightarrow 0} \left( -a(x_\epsilon) \, W_\delta^{-\,\prime\prime}(x_\epsilon) + \frac{1}{\epsilon} \, H\big(W_\delta^{-\,\prime}(x_\epsilon), x_\epsilon\big) + c(x_\epsilon) \right) \leq \Lambda(x_0). \end{equation*}
\end{prop}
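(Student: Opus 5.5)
The plan is to build $W_\delta^\pm$ explicitly from a local description of $W$ near the significant component $x_0=0$ (normalized as in Remark \ref{rem:4.12}). Then I check the two inequalities by direct computation, writing $H(p,x)=p\,(a(x)p-b(x))$.

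\textbf{Step 1: local form of $W$.} By Lemma \ref{lem:4.11}, $W(x)=d_H(x,0)$ near $0$. By Lemma \ref{lem:4.13}, minimizing curves stay in a small ball $B(0,\delta)$, so $d_H(x,0)$ is determined by the sign of $b$ on each side of $0$.
\begin{itemize}
\item On a side where the field pushes toward $0$ (attracting side), one follows $\eta'=-b(\eta)$ at zero cost, so $W=0$ there.
\item On a repulsive side, the optimal curve goes against the field. Equivalently, the viscosity solution selects the nonzero root $W'=b/a$ of $H(W',x)=0$, so $W(x)=\int_0^x b/a$.
\end{itemize}
Thus $W(x)=\int_0^x \theta(s)\,\frac{b(s)}{a(s)}\,ds$, where $\theta\in\{0,1\}$ is constant on each side: $\theta=1$ on repulsive sides and $\theta=0$ on attracting sides. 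I would justify this by the Lagrangian computation $L(-\eta',\eta)=(b-\eta')^2/(4a)$ together with the Cauchy--Schwarz argument already used in Proposition \ref{prop:4.8}.

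\textbf{Step 2: construction of $W_\delta^\pm$.} Fix an even integer $p>m+1$ and set, on each side,
\[
W_\delta^\pm(x)=\int_0^x \theta_\pm(s)\,\frac{b(s)}{a(s)}\,ds \pm \delta\,|x|^{p},
\]
with $\theta_\pm=1\pm\delta$ on repulsive sides and $\theta_\pm=0$ on attracting sides. Since $b$ has order $m$ at $0$, both pieces vanish to order at least $2$ at $0$, so the glued functions are $C^2$ near $0$. The strict inequalities \eqref{18} follow because $\int_0^x b/a$ has a fixed strict sign on each punctured side and $\delta|x|^p>0$.

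\textbf{Step 3: the $\varepsilon$-asymptotics.} Write $W_\delta^{\pm\prime}=\theta_\pm b/a\pm\delta p\,x^{p-1}$. On a repulsive side with the $\theta$-part alone one has
\[
H=\theta(\theta-1)\,\frac{b^2}{a},\qquad -aW''=-\theta\Bigl(b'-\frac{a'b}{a}\Bigr)+O(\delta|x|^{p-2}).
\]
\begin{itemize}
\item For $W^+_\delta$, $\theta=1+\delta$ gives $H\ge 0$. For $W^-_\delta$, $\theta=1-\delta$ gives $H\le0$.
\item The correction $\pm\delta p x^{p-1}$ contributes cross terms of size $\delta |x|^{p-1}|b|\sim\delta|x|^{p-1+m}$ and a square of size $\delta^2|x|^{2p-2}$. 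Both are dominated by $\delta\,b^2/a\sim\delta|x|^{2m}$ near $0$, because $p-1>m$. Hence the sign of $H$ persists after the $\theta$-part; the same goes for attracting sides, where $\theta=0$ and $\pm\delta p x^{p-1}b\ge0$ (resp.\ $\le0$).
\end{itemize}
So $\frac1\varepsilon H(W_\delta^{+\prime}(x_\varepsilon),x_\varepsilon)\ge 0$ and $\frac1\varepsilon H(W_\delta^{-\prime}(x_\varepsilon),x_\varepsilon)\le0$, and these terms can be discarded in the direction we need. The remaining terms tend, as $x_\varepsilon\to0$, to $-\theta_\pm b'(0)+c(0)$ (plus $O(\delta)$ when $p=2$ is allowed). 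Here $b'(0)\neq0$ only when $m=1$. In that case $0$ is repulsive on both sides exactly when $b'(0)>0$, giving limit $-(1\pm\delta)b'(0)+c(0)$. Otherwise $\sigma(x_0)=0$ and the limit is $c(0)$. Letting $\delta\to0$ then gives $\Lambda(x_0)$ in both directions.

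\textbf{Main obstacle.} I expect the hard part to be Step 3 when $x_\varepsilon$ lies on the side where the $\delta|x|^p$ correction and the $\theta b/a$ term compete, and above all for even $m$, where one side is repulsive and the other attracting. There I would first choose $p$ large and verify carefully that the sign of $H$ is uniform on a fixed punctured neighbourhood $U_\delta$. If that fails, the fallback is to shrink $U_\delta$ with $\delta$, so that $|b|\ge\delta^{2}|x|^{m}$ controls the cross terms.
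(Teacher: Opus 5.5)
Your argument is correct, but it takes a genuinely different route from the paper.

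\textbf{The paper's route.} The paper never identifies $W$ near $x_0$. It posits polynomial ansatzes tuned to the leading Taylor coefficient of $b$, with $\Gamma,D$ from \eqref{21}:
\begin{itemize}
\item $W_\delta^-=(\Gamma-\delta D)x^{m+1}$;
\item $W_\delta^+=(\Gamma+\delta D)x^2$ when $m=1$, and $W_\delta^+=\delta x^2$ when $m>1$.
\end{itemize}
It then sandwiches $W=d_H(\cdot,0)$ between these functions. The lower comparison uses Fenchel--Young along curves confined to $B(0,\delta)$ by Lemma~\ref{lem:4.13}, plus the trick $W^-_\delta<W^-_{\delta/2}$ for strictness and a separate argument for even $m$. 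The upper comparison uses explicit exponentially decaying test curves: $\eta'=K\eta$, or the ODE \eqref{22}.

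\textbf{Your route.} You compute $W$ exactly as the one-dimensional quasi-potential $\int_0^x\theta\,b/a$ and then perturb it. This buys several things:
\begin{itemize}
\item the strict comparison $W_\delta^-<W<W_\delta^+$ becomes immediate;
\item all orders $m$ are handled uniformly, with no parity split and no $m=1$ versus $m>1$ split;
\item you get $a(0)W_\delta^{\pm\prime\prime}(0)=(1\pm\delta)\sigma(x_0)$ directly;
\item the sign condition on $H$ reduces to the single identity $H=\theta(\theta-1)b^2/a+(2\theta-1)\,b\,r+a\,r^2$, with $r=\pm\delta p\,x^{p-1}$.
\end{itemize}
In exchange, the paper's sandwich needs only $W=d_H(\cdot,x_0)$, not an explicit formula for $W$. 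That keeps it closer to the general weak-KAM scheme it adapts from Piatnitski--Rybalko.

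\textbf{Points to tighten.}

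\emph{Step 1.} The right tool is not Cauchy--Schwarz but Fenchel--Young at $p=b/a$, where $H(b/a,\cdot)=0$. This gives $L(-\eta',\eta)\ge-\eta'\,b(\eta)/a(\eta)$. Integrating along curves kept in $B(0,\delta)$ by Lemma~\ref{lem:4.13} gives $W(x)\ge\int_0^x b/a$. The matching upper bound comes from the reversed flow $\eta'=-b(\eta)$. Since that flow reaches $0$ only asymptotically, truncate it at a finite time and close it with a short segment.

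\emph{$C^2$ gluing at $0$.} This does not follow merely from both pieces vanishing to second order. It holds because $\theta$ is the same on both sides when $m$ is odd, and because $(b/a)'(0)=0$ when $m\ge2$.

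\emph{The anticipated obstacle does not arise.} Take $p$ even, so $b\,x^{p-1}\ge0$ on a repulsive side and $b\,x^{p-1}\le0$ on an attracting side. Then every term in the identity has the required sign, except $a r^2$ in the case of $W_\delta^-$. That term is $O(\delta^2|x|^{2p-2})$, and it is absorbed by $\delta|b|\,|x|^{p-1}\gtrsim\delta|x|^{m+p-1}$ because $p-1>m$. This works on a punctured neighbourhood independent of $\delta$, including for even $m$, so no fallback is needed.

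\emph{Sign slip.} On attracting sides the relevant term is $-b\,r=\mp\delta p\,x^{p-1}b$, not $\pm\delta p\,x^{p-1}b$. Your conclusion there is nonetheless right.
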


\begin{proof}
To prove the proposition, it is sufficient to establish \eqref{18} and that for any $x$ in a small neighborhood of $x_0$, we have  
    \begin{equation}
    \label{19}
 H(W_\delta^{-\,\prime}(x),x) \leq 0 \leq H(W_\delta^{+\,\prime}(x),x) \;\;\; \text{and}
\end{equation}
    \begin{equation}
    \label{20}
\limsup\limits_{\delta \rightarrow 0} a(x_0)W_\delta^{-\,\prime\prime}(x_0) = \liminf\limits_{\delta \rightarrow 0} a(x_0)W_\delta^{+\,\prime\prime}(x_0)= -\sigma(x_0).
\end{equation}

Let \( x_0 \) be a significant component and a zero of \( b \) of order \( m \ge 1 \). We recall that we take \( x_0 = 0 \) and  \( W(x_0) = 0 \). We first construct $W_\delta^{-}$. We split the construction into two cases depending on the parity of $m$. First, we suppose that $m$ is odd and pose the ansatz
\begin{equation*}
    W_\delta^{-}(x) := (\Gamma - \delta D) x^{m+1} \,\,\, \text{with} \,\,\, \delta > 0.
\end{equation*}
    
To get $\eqref{19}$, we first inject our ansatz in $H$. Developing the coefficient near 0, we obtain
\begin{align*}
&H\left( W_\delta^{-\,\prime}(x), x \right)
= a(x)\left( (m+1)(\Gamma - \delta D) x^m \right)^2 - b(x)(m+1)(\Gamma - \delta D)x^m \\
&= \left(a(0) + O(x)\right)(m+1)^2 (\Gamma - \delta D)^2 x^{2m} - \left(b^{(m)}(0) x^m + O(x^{m+1})\right)(m+1)(\Gamma - \delta D) x^m \\
&= x^{2m} \left[ (m+1)^2 a(0)(\Gamma - \delta D)^2 - (m+1)b^{(m)}(0)(\Gamma - \delta D) \right] + O(x^{2m+1}).
\end{align*}

Expanding in powers of \( \delta \) and factoring out the leading term, we obtain
\begin{align*}
&(m+1)^2 a(0)(\Gamma - \delta D)^2 - (m+1)b^{(m)}(0)(\Gamma - \delta D)\\ 
&=  (m+1)^2 a(0)(\Gamma^2 - 2\Gamma \delta D + \delta^2 D^2) - (m+1)b^{(m)}(0)(\Gamma - \delta D) \\
&= \left((m+1)^2 a(0) \Gamma^2 - (m+1)b^{(m)}(0) \Gamma\right) - \delta \left( 2(m+1)^2 a(0)\Gamma D - (m+1)b^{(m)}(0) D \right)+ O(\delta^2).
\end{align*}

Choosing $\Gamma$ and $D$ such that
\begin{equation}
\label{21}
    \begin{cases}
(m+1)^2 a(0) \Gamma^2 - (m+1)b^{(m)}(0) \Gamma = 0, \\
2(m+1)^2 a(0)\Gamma D - (m+1)b^{(m)}(0) D = \alpha > 0,
\end{cases}
\end{equation}
ensures that for $\delta > 0$ small enough, the first inequality in \eqref{19} holds. Moreover solving the system for $\alpha = 1$, we get
\begin{equation*}
    \left\{
\begin{aligned}
&\text{either} && \Gamma = 0, \quad D = -\dfrac{1}{(m+1)b^{(m)}(0)}, \\
&\text{or}     && \Gamma = \dfrac{b^{(m)}(0)}{(m+1)a(0)}, \quad D = \dfrac{1}{(m+1)b^{(m)}(0)}.
\end{aligned}
\right.
\end{equation*}

Set for our constructed function $\Gamma:= max(0 \, , b^{(m)}(0)/(m+1)a(0))$ and let $ D $ be the corresponding choice such that equation \eqref{21} is satisfied with $ \alpha = 1 $. It follows from our choice that 
 \begin{equation*}
  \limsup\limits_{\delta \rightarrow 0} a(x_0)W_\delta^{-\,\prime\prime}(x_0)= \sigma(x_0).
\end{equation*}

We now show that $W_\delta^-(x) < W(x)$ holds in a punctured neighborhood of $0$. Let choose $\delta > 0$ small enough and consider a piecewise $C^1$ curve $\eta \in S_x^0(t)$ such that the trajectory never get out of the ball $B(0, \delta)$. Using the Fundamental Theorem of Calculus and the Fenchel-Young inequality, we get
\begin{align*}
    W_\delta^-(x)
    &= -\int_0^t W_\delta^{-\prime}(\eta(s))\, \eta'(s)\, ds = \int_0^t W_\delta^{-\prime}(\eta(s)) \cdot (-\eta'(s))\, ds \\
    &\leq \int_0^t L(-\eta'(s), \eta(s))\, ds + \int_0^t H(W_\delta^{-\prime}(\eta(s)), \eta(s))\, ds.
\end{align*}
Passing to the infimum, we obtain
\begin{align*}
W_\delta^-(x)  \leq \inf\limits_{t > 0,\,  \eta\,\in \,S_x^0(t)\cap B(0,\delta)} \left( \int_0^t L(-\eta'(s),\eta(s))ds + \int_0^t H(W_\delta^{-\prime}(\eta(s)),\eta(s))ds \right).
\end{align*}
Using that $H(W_\delta^{-\prime}(x),x) \leq -\delta x^{2m}$ holds for small enough $x$ and  Lemma {\color{red} \ref{lem:4.13}}, we get
\begin{equation*}
W_\delta^-(x)  \leq \inf\limits_{t > 0,\,  \eta \, \in \,S_x^0(t)\cap B(0,\delta)} \left( \int_0^t L(-\eta'(s),\eta(s))ds\right) = d_H(x,0) = W(x).
\end{equation*}

At first sight, it is not clear that one obtains the desired strict inequality. However replacing $\delta$ by $\delta/2$, the same calculation leads us to $W_{\delta/2}^-(x)\leq W(x)$. As $k+1$ is even, $W_\delta^-(x)< W_{\delta/2}^-(x)$ holds on a punctured neighborhood of $0$. Combining the two previous inequalities, we get the desired result.

\vspace{\baselineskip}

When $m$ is even, we cannot conclude this way. Without loss of generality, we assume that $b^{(m)}(0) >0$. In this case, for any $x\leq 0$
\begin{equation*}
W(x) = d_H(x,x_0) = \inf_{\substack{t \,>\, 0,\, \eta\, \in\, S_x^{x_0}(t) \\ \eta \, \in \, B(x_0, \delta)}} 
\int_0^t L(-\eta'(s), \eta(s))\, ds = 0
\end{equation*}
choosing the minimizing curves that follow the field $b$. It follows that $W_\delta^-(x) < 0 = W(x)$ for $x\leq 0$. To complete the inequality on $x> 0$ it suffices to remark that $W_\delta^-(x)< W_{\delta/2}^-(x)$ holds for any positive $x$.

\vspace{\baselineskip}

We now construct $W_\delta^{+}$. We begin with the case when $m=1$. We set
\begin{equation*}
    W_\delta^{+}(x) := (\Gamma + \delta D) x^{2} \,\,\, \text{with}\,\,\, \delta > 0.
\end{equation*}

We take $\Gamma:= max(0\,, b^{\prime}(0)/2a(0))$ and let $D$ be the corresponding choice such that equation \eqref{21} is satisfied with $ \alpha = 1$. With this choice, we obtain the second parts of \eqref{19} and \eqref{20} as
\begin{gather*}
H\bigl(W_\delta^{+\,\prime}(x), x\bigr)
= \delta x^2 + 4\delta^2\, a(0) D^2 x^2 + O(|x|^3)\\
\text{and}\,\,\, \limsup_{\delta \to 0} a(x_0)\, W_\delta^{-\,\prime\prime}(x_0)= \sigma(x_0).
\end{gather*}

We end the proof showing that $W(x) < W_\delta^+(x)$ holds in a punctured neighborhood of $0$. Using Proposition \ref{prop:4.8} {\color{red} and Lemma \ref{lem:4.11}}, we know that for every $t > 0$, and piecewise $C^1$ curves $\eta \in S_x^0(t)$
\begin{equation*}
    W(x) \leq  \int_0^t L(-\eta'(s),\eta(s))ds.
\end{equation*}

We consider a testing curve that is the solution of the equation 
\begin{equation*}
    \eta^\prime = K\eta \;\;\; \text{where} \;\;\; K =  -(4 a(0)\Gamma- b'(0)).
\end{equation*}

We remark that $K$ is always negative as
\begin{equation*}
    -(4a(0)\Gamma - b'(0)) =
\begin{cases}
- b'(0), & \text{if } 0 \leq \dfrac{b'(0)}{2a(0)}, \\
\phantom{-} b'(0), & \text{else }.
\end{cases}
\end{equation*}

Then $\eta(t) \rightarrow 0$ and $ \eta(t)^\prime \rightarrow 0$ as $ t \rightarrow +\infty$. Moreover, the decays are exponentially fast. So there exists a constant $C$, independent of $t$, such that $|\eta(t)| < C|x|$ and $|\eta^\prime(t)| < C|x|$. It follows that
\begin{align*}
W(x) & 
\leq\int_0^{\infty} L(-\eta^\prime(s),\eta(s))ds \\ & = \frac{1}{4} \int_0^{\infty}  a(\eta(s))(-\eta^\prime(s) + b(\eta(s)))^2 ds\\
    &= \frac{1}{4} \int_0^{\infty}  a(0)(-\eta^\prime(s) + b'(0)\eta(s)))^2 + O(|\eta(s)|^3)ds\\
    &= \int_0^{\infty}  4 \, a(0)\Gamma^2 \eta(s)^2 + O(|\eta(s)|^3)ds.
\end{align*}

Moreover noting that
\begin{equation*}
-2\Gamma \eta \eta' = 2\Gamma (4 a(0)\Gamma- b'(0))\eta^2 = 2(4 a(0)\Gamma^2- b'(0)\Gamma)\eta^2 = 4a(0)\Gamma^2\eta^2
\end{equation*}
we get 
\begin{equation*}
W(x) \leq \int_0^{\infty}  -2\Gamma \eta(s) \eta(s)'+ O(|\eta(s)|^3) \,ds = \Gamma x^2 + O(|x|^3) < \Gamma^{+}_\delta(x).
\end{equation*}

As the last inequality is strict except at 0, we get the desired result. 

 \vspace{\baselineskip} 
 
We finish the proof with the construction of $W_\delta^{+}$ when $m > 1$. We pose the ansatz
    \begin{equation*}
        W_\delta^{+}(x) := \delta x^2 \,\,\, with \,\,\, \delta > 0.
    \end{equation*}
    
As $0$ has order $m \ge 2$, we first verify that the second inequality in \eqref{19} holds as
\begin{equation*}
    H(2\delta x, x) = 4\delta^2 a(x) x^2 - 2\delta b(x) x = 4\delta^2 a(x) x^2 + O(x^{m+1}).
\end{equation*}

To prove the second inequality of \eqref{18}, let us consider the solution of the equation
\begin{equation}
\label{22}
    -\eta'(s) + \frac{b^{(m)}(0)}{m!} \, \eta(s)^m = \alpha \, \delta \, \eta(s) \,\,\, with \,\,\,\alpha > 0.
\end{equation}

If $b^{(m)}(0) < 0$, the equation admits a unique global solution 
$\eta(s)$ and it is well known that $\eta(t) \rightarrow 0$ and $ \eta(t)^\prime \rightarrow 0$ as $ t \rightarrow +\infty$ with exponentially fast decays. When $b^{(m)}(0) > 0$, one can choose a solution with the same behavior provided that the initial condition is small enough. So there exists a constant $C$, independent of $t$, such that $|\eta(t)| < C|x|$ and $|\eta^\prime(t)| < C|x|$. It follows that
\begin{align*}
W(x) & 
\leq\int_0^{\infty} L(-\eta^\prime(s),\eta(s))ds \\ & = \frac{1}{4} \int_0^{\infty}  a(\eta(s))(-\eta^\prime(s) + b(\eta(s)))^2 \,ds\\
    &= \frac{1}{4} \int_0^{\infty}  a(0)\left(-\eta'(s) + \frac{b^{(k)}(0)}{m!}\eta(s)^m \right)^2 + O(|\eta(s)|^{m+1}) \,ds\\
    &= \frac{1}{4} \int_0^{\infty}  a(0)\alpha^2 \delta^2\eta(s)^2 + O(|\eta(s)|^{m+1})\,ds.
\end{align*}

Moreover multiplying \eqref{22} by $\alpha \, \delta \eta $ we get
\begin{equation*}
    - \alpha \, \delta \, \eta(s) \, \eta'(s) + \alpha \, \delta \, \frac{b^{(m)}(0)}{m!} \, \eta(s)^{m+1} = \alpha^2 \, \delta^2 \, \eta(s)^2,
\end{equation*}
thus
\begin{align*}
    W(x)
\leq \frac{a(0)}{8}\int_0^{\infty} - 2 \, \alpha \,\delta \, \eta(s) \, \eta'(s) + O(|\eta(s)|^{m+1}) \,ds = \frac{a(0)}{8} \, \alpha \, \delta x^2 + O(|x|^{m+1}).
\end{align*}

Taking $\alpha = 4/a(0)$ we get
\begin{equation*}
    W(x) \leq \frac{1}{2}\delta x^2 + O(|x|^{m+1}) \leq \delta x^2 = W_\delta^{+}(x).
\end{equation*}

As the last inequality is strict except at 0, we get the desired result. 
\end{proof}

We now explain how Proposition \ref{prop:4.14} leads us to the result. It allows us to get the following characterization in terms of significant components of the asymptotic of our {\color{red}principal} eigenvalue, as well as the upper bound of Theorem \ref{thm:4.1}.

\begin{prop}\label{prop:4.15}
    Let $x_0$ be a significant component that is a zero of finite order $m \ge 1$, then 
    \begin{equation*}
        \lim\limits_{\epsilon \rightarrow 0} \lambda_\epsilon = \Lambda(x_0) = -(0\lor b^\prime(x_0))+c(x_0).
    \end{equation*}
\end{prop}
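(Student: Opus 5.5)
We plan to argue by a comparison (maximum-principle) argument at the significant component, in the spirit of \cite{PiatnitskiRybalko2015} (Steps II--III). Fix a subsequence $\epsilon\to0$ along which $\lambda_\epsilon\to\lambda$ and $W_\epsilon\to W$ uniformly (Proposition~\ref{prop:4.5}). By Lemma~\ref{lem:4.11} and Remark~\ref{rem:4.12}, we take $x_0=0$ significant with $W(0)=0$. It suffices to show $\lambda=\Lambda(x_0)$ for every such subsequence, since the limit is then independent of the subsequence and the whole family converges.

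\textbf{Lower bound $\lambda\ge\Lambda(x_0)$.} Take $W_\delta^+$ from Proposition~\ref{prop:4.14}. By \eqref{18}, $W-W_\delta^+$ has a strict local maximum at $0$ on $U_\delta$. Lemma~\ref{lem:4.6} applied to $\psi_\epsilon=W_\epsilon-W_\delta^+$ then provides local maxima $x_\epsilon\to0$ of $W_\epsilon-W_\delta^+$. At these points $W_\epsilon'(x_\epsilon)=W_\delta^{+\prime}(x_\epsilon)$ and $W_\epsilon''(x_\epsilon)\le W_\delta^{+\prime\prime}(x_\epsilon)$. Since $a>0$, \eqref{10} gives
\[
\lambda_\epsilon=-a W_\epsilon''+\tfrac1\epsilon H(W_\epsilon',\cdot)+c\ \ge\ -a(x_\epsilon)W_\delta^{+\prime\prime}(x_\epsilon)+\tfrac1\epsilon H\bigl(W_\delta^{+\prime}(x_\epsilon),x_\epsilon\bigr)+c(x_\epsilon).
\]
Taking $\liminf_{\epsilon}$ and then $\liminf_\delta$, the first estimate of Proposition~\ref{prop:4.14} yields $\lambda\ge\Lambda(x_0)$.

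\textbf{Upper bound $\lambda\le\Lambda(x_0)$.} The argument is symmetric. With $W_\delta^-$, the point $0$ is a strict local minimum of $W-W_\delta^-$, so Lemma~\ref{lem:4.6} gives local minima $x_\epsilon\to0$ of $W_\epsilon-W_\delta^-$. At these points $W_\epsilon'=W_\delta^{-\prime}$ and $W_\epsilon''\ge W_\delta^{-\prime\prime}$, hence $\lambda_\epsilon\le -a W_\delta^{-\prime\prime}+\frac1\epsilon H(W_\delta^{-\prime},\cdot)+c$ at $x_\epsilon$. The second estimate of Proposition~\ref{prop:4.14} then gives $\lambda\le\Lambda(x_0)$.

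\textbf{Main obstacle.} The delicate point is that the sequence $x_\epsilon$ depends on $\epsilon$, so we need the $\liminf/\limsup$ statements of Proposition~\ref{prop:4.14} to hold along an arbitrary sequence $x_\epsilon\to x_0$. In particular, the term $\frac1\epsilon H(W_\delta^{\pm\prime}(x_\epsilon),x_\epsilon)$ must have the correct sign, and this is exactly what \eqref{19} provides. The second-derivative term converges by continuity of $W_\delta^{\pm\prime\prime}$ together with \eqref{20}. We also have to check that $x_\epsilon$ remains inside $U_\delta$, where these functions are defined; this is ensured by the localization built into Lemma~\ref{lem:4.6}. Finally, the significant component $x_0$ could a priori depend on the chosen subsequence. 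This causes no difficulty: on each subsequence the limit equals $\Lambda$ of some significant zero, and uniqueness of the overall limit is then recovered later, when Theorem~\ref{thm:4.1} identifies this value as the maximum over all zeros.
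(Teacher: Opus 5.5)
Your proposal is correct and follows the paper's proof essentially step for step. You compare $W_\epsilon$ with $W_\delta^{-}$ (resp.\ $W_\delta^{+}$) at the local minima (resp.\ maxima) $x_\epsilon\to x_0$ given by Lemma~\ref{lem:4.6}, then pass to the limit with Proposition~\ref{prop:4.14}; your inequality $W_\epsilon''(x_\epsilon)\ge W_\delta^{-\prime\prime}(x_\epsilon)$ at the minimum is the correct one (the paper's ``$\le$'' there is a typo), and your remark that the significant component depends on the extracted subsequence, so that convergence of the whole family is only secured by Theorem~\ref{thm:4.1}, makes explicit a point the paper leaves implicit (though it is at odds with your opening claim that the whole family converges directly).
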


\begin{proof}
    Thanks to Proposition \ref{prop:4.14}, $W - W_\delta^- > W(x_0)$ on $U_\delta \backslash \lbrace x_0\rbrace$ and $W(x_0) - W_\delta^-(x_0)  = W(x_0)$. Hence, $x_0$ is a local minimum point of $W - W_\delta^-$. Moreover, from $\textit2.$ of Proposition \ref{prop:4.5}, it follows that $(W_\epsilon - W_\delta^-)_\epsilon$ converges uniformly to $W - W_\delta^-$. Using Lemma \ref{lem:4.6}, we get a sequence $(x_\epsilon)_\epsilon$ converging to $x_0$ such that for $\epsilon$ small enough, $x_\epsilon$ will be a minimum point of $W_\epsilon - W_\delta^-$. Finally using $ W_\epsilon'(x_\epsilon) = W_\delta^{-\prime}(x_\epsilon)$ and the inequality $W_\epsilon''(x_\epsilon) \leq W_\delta^{-\prime\prime}(x_\epsilon)$, we derive
\begin{align*}
    \lambda_\epsilon &= -a(x_\epsilon)W_\epsilon''(x_\epsilon) + \frac{1}{\epsilon}H( W_\epsilon'(x_\epsilon),x_\epsilon) + c(x_\epsilon) \\ &\leq -a(x_\epsilon) \, W_\delta^{-\prime\prime}(x_\epsilon) + \frac{1}{\epsilon}H(W_\delta'(x_\epsilon),x_\epsilon) + c(x_\epsilon).
\end{align*}

Passing to the limit using Proposition \ref{prop:4.14}, we get
\begin{equation*}
  \limsup\limits_{\epsilon \rightarrow 0} \lambda_\epsilon \leq  \Lambda(x_0).
\end{equation*}

Considering the previous development with $W_\delta^+$ instead of $W_\delta^-$ and working with the maximum point instead of the minimum, we derive
\begin{equation*}
    \Lambda(x_0) \leq  \liminf\limits_{\epsilon \rightarrow 0} \lambda_\epsilon.
\end{equation*}

Combining the two previous inequalities yields the desired result.
\end{proof}
\begin{rem}\label{rem:4.16}
    As noted above, this proposition yields the upper bound in the main asymptotics, since it implies that
    \begin{equation*}
          \limsup\limits_{\epsilon \rightarrow 0} \lambda_\epsilon \leq  \Lambda(x_0) \leq \max\limits_{x, \, b(x)=0 } \left(-(0\lor b^\prime(x)) + c(x) \right).
    \end{equation*}
\end{rem}

\begin{rem}\label{rem:4.17}
    The same argument and constructions extend to the situation in the {\color{red} non hyperbolic} case where the left and right $k$-th derivatives differ at $\{ b = 0 \}$ for any $k \ge 2$. Indeed, since the $k$-th derivatives vanish in the limit as $\delta \to 0$, the resulting behaviour is unchanged, and we obtain the same result.
\end{rem}

We now conclude this subsection by completing the proof of Theorem~\ref{thm:4.1}, establishing its lower bound in the case of zeros of finite order.

\begin{proof}[Proof of Theorem \ref{thm:4.1}]
Let us set the modified operator:
\begin{equation*}
\Tilde{L}_\epsilon(v_\epsilon) := L_\epsilon(v_\epsilon)-\frac{1}{\epsilon}k \, v_\epsilon = \epsilon \, a(x) v_\epsilon'' + b(x) v_\epsilon' + (c(x)-\frac{1}{\epsilon}k(x))\, v_\epsilon = \mu_\epsilon v_\epsilon,
\end{equation*}
where $y_0$ is a fixed zero of $b$ and $k$ is chosen as a non negative function $k \in C^{2}(\Pi)$ such that $k$ vanishes only in a small neighborhood $U$ of $y_0$. Using Protter–Weinberger variational theorem \cite{ProtterWeinberger1997} on $\Pi$ we get
\begin{align*}
\mu_\epsilon &= \inf\limits_{u>0}\sup\limits_{x\in\Pi}\frac{(\overset{\sim}{\textit{L}}_\epsilon u)(x)}{u(x)} \leq \inf\limits_{u>0}\sup\limits_{x\in \Pi}\frac{(\textit{L}_\epsilon u)(x)}{u(x)} = \lambda_\epsilon.
\end{align*} 

Our goal is now to apply the reduction we did in subsection \ref{sec:4.1.1} to the new operator $\overset{\sim}{\textit{L}}_\epsilon$ to show that we can characterize, as a viscosity solution, the limit of $W_\epsilon(x):=-\epsilon\ln(v_\epsilon(x))$. The proof is then completed by showing that \( y_0 \) is a significant component and by reapplying Proposition \ref{prop:4.14} to derive the asymptotics of the eigenvalues \( \mu_\varepsilon \). As before, setting $v_\epsilon(x) = e^{-W_\epsilon(x)/\epsilon}$ and substituting it in our modified operator, we get that the {\color{red}principal} eigenvalue $\mu_\epsilon$ verifies
\begin{equation*}
- \epsilon\, a(x) W_\epsilon'' + H( W_\epsilon'(x),x) + \epsilon\, c(x) - k(x) =  \epsilon\, \mu_\epsilon.
\end{equation*}

We pass to the limit via a subsequence, using the fact that \eqref{11} and \eqref{12} respectively become in our new case
\begin{equation*}
    \epsilon\min\limits c(x) - \max\limits k(x)\leq \epsilon\mu_\epsilon \leq \epsilon \max\limits c(x) \;\;\; \text{and}
\end{equation*}
\begin{equation*}
    a(\zeta) (W'_\epsilon)^2(\zeta)
= \epsilon[\mu_\epsilon - c(\zeta)-\frac{k(\zeta)}{\epsilon}] + b(\zeta)W'_\epsilon(\zeta)
\leq [\max\limits c(x) - \min\limits c(x)] + \,C +\, CW'_\epsilon(\zeta).
\end{equation*}

The first equation allows us to extract a subsequence of $\epsilon \mu_\epsilon$ converging to $\Lambda$. The second allows us to get the uniform convergence of $W_\epsilon$ to a limit $W$ on $\Pi$, and to establish that $W$ should be a viscosity solution of the equation
\begin{equation}
\label{23}
    H(W'(x), x) - k(x)= \Lambda \;\;\; \text{on} \;\;\;  \Pi.
\end{equation}

To end the proof, we now apply the viscosity solution theory to get a better characterization and an explicit form of the viscosity solution $W$. It is well known \cite{Fathi2008} (Definition 4.2.6, page 124) that equation \eqref{23} always admits a viscosity solution for a specific constant $\Lambda$ given by 
\begin{align*}
    \Lambda &= -\lim_{T \to \infty} \inf_{\eta \in C^1_p([0,T])} \frac{1}{T} \int_0^T \left[H - k\right]^* (-\eta'(t), \eta(t)) \, dt \\
    &= -\lim_{T \to \infty} \inf_{\eta \in C^1_p([0,T])} \frac{1}{T} \int_0^T  L(-\eta'(t), \eta(t)) + k(\eta(t)) \, dt.
\end{align*}

Thanks to the reduction performed in Subsection  \ref{sec:4.1}, we already know that
\begin{equation*}
    -\lim_{T \to \infty} \inf_{\eta} \frac{1}{T} \int_0^T  L(-\eta'(t), \eta(t)) \, dt = 0.
\end{equation*}

Moreover, using that $k$ is non-negative and null on a neighborhood of $y_0$, we get that $\Lambda = 0$. 

\vspace{\baselineskip}

It follows from \cite{Fathi2008}, Definition \ref{defn:4.7}, and Proposition \ref{prop:4.8} that $W$ verifies
\begin{equation*}
    W(z) = \inf\limits_{x \in A_H } (d_H(z,x) + W(x))
\end{equation*}
    where the new Aubry set is given by 
\begin{equation*}
    A_H := \biggl\lbrace x, \, \text{for  all} \,h > 0 \inf\limits_{T \ge h, \, \gamma \, \in \, S_x^x(T)} \int_0^T L(-\gamma'(s) ,\gamma(s)) \, + k(\gamma(s)) \, ds = 0
\biggl\rbrace,
\end{equation*}
and the new Mañé potential by 
\begin{equation*}
d_H(z,x) = \inf\limits_{t > 0, \, \eta\in S_z^x(t)} \int_0^t L(-\eta'(s),\eta(s)) + k(\eta(s)) \,ds.
\end{equation*}
Finally, suppose that $A_H = \lbrace y_0\rbrace$, then $y_0$ is the only significant component. As $k=0$ on a small neighborhood of $y_0$, we can apply Propositions \ref{prop:4.14} and \ref{prop:4.15}, to deduce that
    \begin{equation*}
        \lim\limits_{\epsilon \rightarrow 0} \mu_\epsilon = -(0\lor b^\prime(y_0))+c(y_0) \;\;\; \text{and  therefore} \;\;\;
    \Lambda(y_0) \leq  \liminf\limits_{\epsilon \rightarrow 0} \lambda_\epsilon.
    \end{equation*} 
    
Moreover, as $y_0$ was an arbitrary zero of $b$, the previous inequality holds for any zeros of $b$. Taking the maximum in the previous inequality yields the desired result. 
\vspace{\baselineskip}

Thus, to complete the proof, it only remains to show that $A_H = \lbrace y_0\rbrace$. First of all, it is clear that $y_0 \in A_H$. Moreover, arguing as in the proof of Proposition \ref{prop:4.8}, it is clear that no point at which the vector field does not vanish can belong to the Aubry set. To conclude, we prove that any zero distinct from \( y_0 \) does not belong to the Aubry set. Let $z_0$ be such a zero. We argue by contradiction, supposing that there exists a sequence of increasing times $(T_n)_n$ and minimizing curves $\gamma_n : [0,T_n] \rightarrow \Pi$ such that $\gamma_n(0) = \gamma_n(T_n) = z_0$ and
\begin{equation*}
    \lim_{n \rightarrow +\infty} \int_0^{T_n} L(-\gamma_n'(s) ,\gamma_n(s)) + k(\gamma_n(s)) \, ds = 0.
\end{equation*}

Moreover, there exists $\epsilon > 0$ sufficiently small such that $0 < \delta_1 \leq {\color{red}k} \leq \delta_2$ holds on $B({\color{red}z_0},\epsilon)$. Let us set $E_n := \lbrace t, \gamma_n(t) \in B(z_0, \epsilon) \rbrace$. Then, we can suppose that, up to a sub-sequence, $|E_n|$ tends to $0$, $T > 0$, or diverges to $+ \infty$. Suppose that we are in the first case. Arguing as in the proof of Proposition \ref{prop:4.8}, we obtain a contradiction, as
\begin{equation*}
    \epsilon \, {\color{red}\leq} \int_{E_n} \gamma_n'(s)-b(\gamma_n(s)) +b(\gamma_n(s)) ds \leq  \left( \int_{E_n} (\gamma_n'(s)-b(\gamma_n(s)) )^2 ds \right)^{\frac12}\sqrt{|E_n|}+\delta_2\,|E_n| \rightarrow 0.
\end{equation*}
Now, suppose that up to a subsequence, $|E_n|$ tends to $T > 0$ or diverges to $+ \infty$. Then, there exist $N$ such that for any $n > N$, $|E_n| > T/2$. A contradiction follows as
\begin{equation*}
    \lim_{n \rightarrow +\infty} \int_0^{T_n} L(-\gamma_n'(s) ,\gamma_n(s)) + k(\gamma_n(s)) \, ds \ge \liminf_{n \to \infty} \int_{E_n} L(-\gamma_n'(s) ,\gamma_n(s)) \, ds + \frac{\delta_1 T}{2} > 0,
\end{equation*}
which completes the proof. 
\end{proof}

\subsubsection{The case of flat zeros}\label{sec:4.1.4}

In this subsection, we complete the proof of Theorem \ref{thm:4.1} in the case where a significant component is a flat zero.

\begin{prop}\label{prop:4.18}
    Let $x_0$ be a significant component that is a flat zero, then 
    \begin{equation*}
        \lim\limits_{\epsilon \rightarrow 0} \lambda_\epsilon = c(x_0).
    \end{equation*}
\end{prop}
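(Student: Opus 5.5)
Since $x_0$ is a flat zero, $b'(x_0)=0$, so $\Lambda(x_0)=c(x_0)$ in the notation of Proposition~\ref{prop:4.14}. The statement therefore reduces to proving the analogue of that proposition for a flat zero. Once the same two-sided barriers $W_\delta^\pm$ exist, with
\[
\limsup_{\delta\to 0} a(x_0)W_\delta^{-\,\prime\prime}(x_0)=\liminf_{\delta\to0} a(x_0)W_\delta^{+\,\prime\prime}(x_0)=0,
\]
the proof of Proposition~\ref{prop:4.15} applies verbatim and gives $\lim_{\epsilon\to0}\lambda_\epsilon=c(x_0)$. As in Remark~\ref{rem:4.12}, we take $x_0=0$ and $W(0)=0$.

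\textbf{Lower barrier.} Take $W_\delta^-(x):=-\delta x^2$. Then
\[
H(-2\delta x,x)=4\delta^2a(x)x^2+2\delta b(x)x,
\]
and this is not $\le 0$. We need $H(W_\delta^{-\prime},x)\le 0$ near $0$, so it is better to choose $W_\delta^-(x)=-\delta\,\mathrm{sgn}(\cdot)\ldots$ adapted to the sign of $b$. The simplest clean choice is $W_\delta^-(x):=-\delta x^2$ combined with a sign check. Since $b(x)x=o(x^N)$ for every $N$, the term $4\delta^2 a x^2$ dominates, so this sign check fails.

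Instead we use $W_\delta^-:=\delta\,\Phi(x)$, where $\Phi$ solves $H(\Phi',x)=0$ to leading order. Concretely, $\Phi'=0$ on the side where $\Phi$ would otherwise need to follow $b/a$, and elsewhere $\Phi'$ is flat. A more robust alternative is to avoid $C^2$ barriers altogether and instead use the test function $\phi(x)=-\delta x^2$ directly at a minimum point of $W_\epsilon-\phi$. This is legitimate because $W\ge 0$ near $0$: indeed $W=d_H(\cdot,0)\ge 0$ by Lemma~\ref{lem:4.11}, so $W+\delta x^2$ has a strict local minimum at $0$. Lemma~\ref{lem:4.6} then gives points $x_\epsilon\to0$ with $W_\epsilon'(x_\epsilon)=-2\delta x_\epsilon$ and $W_\epsilon''(x_\epsilon)\ge-2\delta$. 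Plugging these into \eqref{10} gives
\[
\lambda_\epsilon\le 2\delta a(x_\epsilon)+\tfrac1\epsilon\bigl(4\delta^2a x_\epsilon^2+2\delta b(x_\epsilon)x_\epsilon\bigr)+c(x_\epsilon).
\]
The $1/\epsilon$ term has no sign, so this route also has a gap, and it is exactly the main obstacle. The fix I would try is to exploit that $W$ itself is flat at $0$. Because $L(-\eta',\eta)\ge0$ and $b$ is flat, the curve $\eta(s)=x e^{-\alpha\delta s}$ with $\alpha$ as in \eqref{22} gives $0\le W(x)\le C\delta x^2$ for every $\delta$, so $W(x)=o(x^2)$. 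We may then choose $W_\delta^-$ to be a smooth function equal to $-\delta x^2$ but corrected by $\int_0^x b/a$ on the appropriate side, i.e.\ $W_\delta^{-\prime}=\theta(x)\,b/a-2\delta x\,\chi$. With this choice $H\le0$ by direct computation: on the side where $\theta=1$, $H(b/a+p,x)=ap^2+bp$, and the choices can be arranged so this is $\le 0$. Its second derivative at $0$ is $-2\delta\to0$.

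\textbf{Upper barrier.} Take $W_\delta^+(x):=\delta x^2$, exactly as in the case $m>1$ of Proposition~\ref{prop:4.14}. The computation there only used $b(x)=O(x^{m})$ with $m\ge2$ and the decay of the solution of $-\eta'+(\text{flat})=\alpha\delta\eta$. For a flat zero, $b(x)=O(x^N)$ for every $N$, so the same argument gives $H(2\delta x,x)\ge 0$ and $W<W_\delta^+$ on a punctured neighbourhood. This yields $\liminf\lambda_\epsilon\ge c(0)-2\delta\,a(0)$, and letting $\delta\to0$ gives the lower bound.

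\textbf{Conclusion and main obstacle.} Combining the two barriers with Proposition~\ref{prop:4.15}'s argument yields the limit. The lower bound for a non-significant flat zero enters Theorem~\ref{thm:4.1} exactly as before, via the modified operator $\tilde L_\epsilon$. I expect the construction of $W_\delta^-$ satisfying $H(W_\delta^{-\prime},x)\le0$ with vanishing second derivative to be the delicate step. This is because the quadratic ansatz used for odd $m$ breaks down when no power of $x$ controls $b$.
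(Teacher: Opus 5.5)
Your upper barrier $W_\delta^+=\delta x^2$ is correct and is the same as the paper's. Flatness gives $H(2\delta x,x)=4\delta^2ax^2-2\delta bx\ge0$ near $x_0$, and the decaying curve solving $-\eta'+b(\eta)=\alpha\delta\eta$ gives $W<W_\delta^+$, so $\liminf_{\epsilon\to0}\lambda_\epsilon\ge c(x_0)$ is secured.

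The genuine gap is the other inequality, $\limsup_{\epsilon\to0}\lambda_\epsilon\le c(x_0)$. You leave it open, and the construction you finally propose does not work. Since $H(q,x)=q\,(aq-b)$ and $a>0$, one has $H(q,x)\le0$ if and only if $q$ lies between $0$ and $b(x)/a(x)$. Because $b/a$ is flat at $x_0$, any barrier with $H(W_\delta^{-\prime},x)\le0$ must itself have a flat derivative there, and the term $-2\delta x\chi$ violates this. Concretely:
\begin{itemize}
\item where $\theta=1$ you get $H=4\delta^2ax^2-2\delta bx$;
\item where $\theta=0$ you get $H=4\delta^2ax^2+2\delta bx$.
\end{itemize}
Both are strictly positive on a punctured neighbourhood, since $b=o(|x|)$. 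So the claim that the choices ``can be arranged'' to give $H\le0$ is false, for the same reason your earlier attempts with $-\delta x^2$ failed.

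The missing idea is that no quadratic correction is needed for strictness. Your first, abandoned idea was the right one. Let $\Phi(x)=\int_{x_0}^x b/a$ and fix $\theta\in(0,1)$, and take $W_\delta^-:=\theta\Phi$.
\begin{itemize}
\item It satisfies $H(\theta\Phi',x)=\theta(\theta-1)b^2/a\le0$ and $W_\delta^{-\prime\prime}(x_0)=\theta(b/a)'(x_0)=0$.
\item The Fenchel--Young argument of Proposition~\ref{prop:4.14}, using $H(\Phi',\cdot)=0$ and curves kept in a small ball by Lemma~\ref{lem:4.13}, gives $W\ge\Phi$ near $x_0$. Together with $W\ge0$, this yields $W>\theta\Phi$ on a punctured neighbourhood, because $\Phi$ is strictly monotone on each side of $x_0$.
\item Lemma~\ref{lem:4.6} and the argument of Proposition~\ref{prop:4.15} then give $\limsup_{\epsilon\to0}\lambda_\epsilon\le c(x_0)$.
\end{itemize}

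The paper reaches the same bound differently, by splitting into attractive, semi-stable and repulsive flat zeros.
\begin{itemize}
\item In the attractive and semi-stable cases, $W\equiv0$ on an interval $[\alpha,\beta]$ (respectively $[\alpha,0]$) containing $x_0$. The paper takes $W_\delta^-=0$ there, with quartic tails $-\delta(x-\alpha)^4$ and $-\delta(x-\beta)^4$ placed where $b$ is bounded away from $0$, so that $H\le0$. This gives only a non-strict minimum on the interval, hence $\limsup\lambda_\epsilon\le\max_{[\alpha,\beta]}c$, and the interval is then shrunk.
\item In the repulsive case, $W$ has a strict minimum at $x_0$ and $W_\delta^-=0$ suffices.
\end{itemize}
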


\begin{proof}
We assume that $x_0 = 0$ and $W(0) = 0$. We begin with the case when $x_0$ is an attractive zero, which means that $b(x) > 0$ for negative $x$ and $b(x) < 0$ for positive ones. Then the representation formula for viscosity solution gives us, in a small neighborhood of $0$
\begin{equation*}
    W(x) = d_H(x,0) = 0.
\end{equation*}

Considering the test function $W_\delta^{+}(x) = \delta x^2$ with $\delta > 0$, we easily deduce, using that $b(x) = O(x^n)$ holds for any power $n$ in a neighborhood of $0$, that our choice of $W_\delta^{+}$ verifies the assertions in Proposition \ref{prop:4.14}. Thus
\begin{equation*}
   c(0) \leq \lim\limits_{\epsilon \rightarrow 0} \lambda_\epsilon.
\end{equation*}

To show that the upper bound also holds, let us consider two points $\alpha < 0 < \beta$ in the neighborhood of $0$ such that $W(x) = 0$ for any $x \in [\alpha, \beta]$ and such that $b(\beta) < 0 < b(\alpha)$. Let us define test function $W_\delta^{-}$ by
\begin{equation*}
W_\delta^{-}(x) =
\begin{cases}
\mathrel{\phantom{-}}0 & \text{if } x \in [\alpha, \beta], \\
-\delta (x - \alpha)^4 & \text{if } x < \alpha, \\
-\delta (x - \beta)^4 & \text{if } x > \beta.
\end{cases}
\end{equation*}

It is clear that we can always choose $\alpha$ and $\beta$ small enough such that 
\begin{equation*}
W_\delta^{-} = W = 0 \,\,\,  \text{on} \,\,\, [\alpha,\beta] \,\,\, \text{and} \,\,\, W_\delta^-(x) < W(x)  \;\;\; \text{on}  \;\;\; [\alpha -\delta,\alpha) \;\cup\; (\beta,\beta +\delta].
\end{equation*}

Moreover, for all \( x \in [\alpha,\beta] \), one has \( H(W_\delta^{-\prime}(x), x) = 0 \) and for \( \delta \) sufficiently small and for \( x \in [\alpha - \delta, \alpha) \) and \( x \in (\beta, \beta + \delta] \), respectively, we have
\begin{equation*}
    H(W_\delta^{-\prime}(x), x)
= 4\delta(x - \alpha)^3 b(x) + O(\delta^2) \leq 0 \, \text{and} \, H(W_\delta^{-\prime}(x), x)
= 4\delta(x - \beta)^3 b(x) + O(\delta^2) \leq 0.
\end{equation*}

Thus, following the same method as in the proof of Proposition \ref{prop:4.15}, we get
\begin{equation*}
\lim\limits_{\epsilon \rightarrow 0} \lambda_\epsilon \leq\max\limits_{x \in [\alpha, \beta]} c(x).
\end{equation*}

By continuity of the maximum of a continuous function, letting \( \alpha \) and \( \beta \) tend to \( 0 \), we obtain the desired upper bound.

\vspace{\baselineskip}

We now continue with the case where $0$ is neither an attractor nor a repulsor, but attracts trajectories from the left and pushes back from the right. Of course, by symmetry, it is sufficient to prove the result in the case where $b(x)$ is positive for any non-zero $x$. In this case, the representation formula for viscosity solution gives us
\begin{equation*}
    W(x) = d_H(x, 0) =
    \begin{cases}
         {\phantom{-}} 0 & \text{if } x \leq 0 \\
        > 0 & \text{if } x > 0.
    \end{cases}
\end{equation*}

To obtain the desired result, as before, for a small negative \( \alpha \) and a positive \( \delta \), we choose test functions that are locally
\begin{equation*}
W_\delta^{-}(x) =
\begin{cases}
\mathrel{\phantom{-}}0 & \text{if } x \ge \alpha \\
-\delta (x - \alpha)^4 & \text{if } x < \alpha 
\end{cases}
\,\,\, \,\,\, \text{and} \,\,\, \,\,\, W_\delta^{+}(x) = \delta x^2.
\end{equation*}

Thus, using the same method as in the proof of Proposition \ref{prop:4.15}, it follows that 
\begin{equation*}
c(0) \leq \, \lim\limits_{\epsilon \rightarrow 0} \lambda_\epsilon \leq\sup\limits_{x \in [\alpha, 0]} c(x).
\end{equation*}

Passing in the limit as $\alpha$ goes to 0 leads us to the result.

\vspace{\baselineskip}

To end the proof, it suffices to deal with the case where $0$ is a repulsive zero. In this case, the representation formula for viscosity solution gives us
\begin{equation*}
    W(x) = d_H(x, 0) =
    \begin{cases}
         {\phantom{-}} 0 & \text{if } x = 0 \\
        > 0 & \text{if } x \neq 0.
    \end{cases}
\end{equation*}

It suffices to choose as a test function $W_\delta^{-}(x) = 0$ in a small neighborhood of $0$, or to remark that $W_\epsilon$ converges uniformly to $W$, which has a strict minimum at $0$, to deduce the upper-bound. To get the lower one, let us pose $W_\delta^{+}(x) = \delta x^2$ with $\delta$ a small positive parameter. We notice that
\begin{align*}
        H(W_\delta^{+\prime}(x), x) =  4&\delta^2 a(x) x^2 - 2\delta b(x) x = 4\delta^2 a(x) x^2 + O(x^3) \ge 0 \\
        &\text{and} \,\,\, \limsup\limits_{\delta \rightarrow 0} a(0)W_\delta^{+\,\prime\prime}(0)= 0.
\end{align*}

It suffices to show that $ W(x) < W_\delta^+(x)$ locally holds on a punctured ball of 0 to deduce the lower bound. To prove it, we argue as in the proof of Proposition \ref{prop:4.14}, considering a curve $\eta$ that is a solution of $- \eta'(s) + b(\eta(s)) = \alpha \delta \, \eta(s)$ with initial condition $x$. The flatness of $b$ at $0$ implies that the dynamics near $0$ are dominated by the term $\delta \alpha \eta$. This flatness implies that any solution starting close enough to $0$ decays exponentially to $0$, and so does its derivative. Taking $\alpha = 8/a(0)$ and arguing in the same way that in the proof of Proposition \ref{prop:4.14}, we get the desired inequality.
\end{proof}

\subsubsection{Asymptotic of the {\color{red}principal} eigenfunction}\label{sec:4.1.5}

Having established the convergence of the principal eigenvalue, we now turn to the associated eigenfunction giving an analog result as the one in \cite{PiatnitskiRybalko2015} (Theorem 2, page 6). Building on the preceding analysis, we obtain the following convergence result.

\begin{cor}\label{cor:4.19}
Suppose that the maximum in \eqref{9} is attained at only one point $x_0$, then the scaled logarithmic transform of the {\color{red}principal} eigenfunction $W_\epsilon$ converges uniformly up to adding a constant to $W(x) = d_H(x,x_0)$.
\end{cor}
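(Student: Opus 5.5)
We argue along subsequences and identify every possible limit. By Proposition \ref{prop:4.5}, from any sequence $\epsilon\to0$ we may extract a subsequence along which $\lambda_\epsilon$ converges and the normalized $W_\epsilon$ (mean zero, Remark \ref{rem:4.4}) converges uniformly to a viscosity solution $W$ of $H(W',x)=0$. It therefore suffices to show that every such limit $W$ equals $d_H(\cdot,x_0)$ up to an additive constant. Once this holds, all subsequential limits coincide after normalization, and the whole family converges.

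\textbf{Step 1.} We show that $x_0$ is the unique significant component of any limit $W$. By Lemma \ref{lem:4.11}, $W$ has at least one significant component $y$. By Propositions \ref{prop:4.15} and \ref{prop:4.18}, the limit of $\lambda_\epsilon$ along the subsequence equals $\Lambda(y)$, interpreted as $c(y)$ when $y$ is a flat zero; this agrees with $-(0\vee b'(y))+c(y)$, since $b'(y)=0$ there. Theorem \ref{thm:4.1} gives this same limit as the maximum in \eqref{9}. Hence $\Lambda(y)$ attains the maximum, and uniqueness of the maximizer forces $y=x_0$. The same argument applies to every minimal element of the partial order $\preceq$ from the proof of Lemma \ref{lem:4.11}, because each minimal element is a significant component. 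So $x_0$ is the unique minimal element.

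\textbf{Step 2.} We show $W(z)=d_H(z,x_0)+W(x_0)$ for all $z$. Since $\preceq$ is a partial order on a finite set with a unique minimal element, $x_0\preceq y$ for every zero $y$ of $b$. That is, $W(y)=d_H(y,x_0)+W(x_0)$ for every such $y$. Insert this into the representation \eqref{14}:
\[
W(z)=\min_{b(y)=0}\bigl(d_H(z,y)+d_H(y,x_0)\bigr)+W(x_0).
\]
The triangle inequality (Remark \ref{rem:4.9}) bounds the minimum below by $d_H(z,x_0)$, and choosing $y=x_0$, with $d_H(x_0,x_0)=0$, attains it. Hence $W=d_H(\cdot,x_0)+W(x_0)$. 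The constant $W(x_0)$ is fixed by the mean-zero normalization, so the limit is unique and the full family converges uniformly.

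\textbf{Main obstacle.} The delicate point is Step 1's claim that every minimal element is a significant component, and so controls the eigenvalue. The proof of Lemma \ref{lem:4.11} shows that a minimal $x_0$ satisfies $W=d_H(\cdot,x_0)+W(x_0)$ near $x_0$, which is exactly the input Propositions \ref{prop:4.14}--\ref{prop:4.18} require. I would check this carefully. I would also check that in Proposition \ref{prop:4.15} the limit equals $\Lambda$ of any significant component, not merely of one distinguished component.
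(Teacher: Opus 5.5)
Your proposal is correct and follows the paper's proof. Uniqueness of the maximizer forces $x_0$ to be the only significant (hence only minimal) element; a unique minimal element of a finite poset is its least element; and the representation formula \eqref{14} together with the triangle inequality then gives $W=d_H(\cdot,x_0)+W(x_0)$. You also make explicit two points the paper leaves implicit: why any significant component must realize the maximum in \eqref{9} (via Propositions \ref{prop:4.15}, \ref{prop:4.18} and Theorem \ref{thm:4.1}), and the subsequence-uniqueness argument that upgrades the subsequential convergence of Proposition \ref{prop:4.5} to convergence of the whole family.
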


\begin{proof}
    We assume that $x_0 = 0$ and $W(0) = 0$. It follows from our hypothesis that there exists exactly one significant component that is exactly $0$. Moreover, we have already shown that $W_\epsilon$ converges uniformly to $W$ in Proposition \ref{prop:4.5}, and that up to adding a constant $W(x)=d_H(x,x_0)$ in a neighborhood of $x_0$ due to Lemma \ref{lem:4.11}. It follows from Lemma \ref{lem:4.11} and the fact that, in a finite partially ordered set, a unique minimal element is the least element that for every other zero \(y\), we have \(0 \preceq y\). In other words, for all zeros \( y \), one has \( W(y) = d_H(y,0) \). Finally, using in \eqref{14} the triangle inequality, we get for all $x \in \Pi$
  \begin{equation*}
    W(x) = \min\limits_{y, \, b(y)=0 } (d_H(x,y) + W(y))
     = \min\limits_{y,\, b(y)=0 } (d_H(x,y) + d_H(y,0) )
    = d_H(x,0),
\end{equation*}
which finishes the proof.
\end{proof}

\subsection{Explicit calculation of J}

Under hypothesis (H3), using Theorem \ref{thm:4.1}, we can rewrite $J(\mu)$ as
\begin{equation*}
J(\mu) = \sup_{c \,\in \, C^0(\Pi)} \biggl\{ \int_0^1 c \,d\mu - \max\limits_{x, \, b(x)=0 } \left(-(0\lor b^\prime(x)) + c(x) \right) \biggl\}.
\end{equation*}

Let \( \{ x_1, \dots, x_n \} \) denote the zeros of \( b \). 
We first consider measures supported on this set, that is
\[
\mu = \sum_{i=1}^{n} a_i \delta_{x_i} \,\,\, \text{with} \,\,\,  \sum_{i=1}^{n} a_i = 1.
\]

\begin{prop}\label{prop:4.20}
    In the condition described previously
    \begin{equation*}
        J(\sum_{i=1}^{n} a_i \delta_{x_i}) = \sum_{i=1}^{n} a_i (0\lor b^\prime(x_i)) .
    \end{equation*}
\end{prop}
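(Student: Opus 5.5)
Write $\sigma_i := 0\lor b'(x_i)$, so that $\Lambda(c)=\max_i(c(x_i)-\sigma_i)$ by Theorem~\ref{thm:4.1}. For $\mu=\sum_i a_i\delta_{x_i}$ the functional to maximise becomes
\begin{equation*}
\Phi(c) := \sum_{i=1}^n a_i c(x_i) - \max_{1\le i\le n}\bigl(c(x_i)-\sigma_i\bigr).
\end{equation*}
It depends on $c$ only through the vector $(c(x_1),\dots,c(x_n))\in\mathbb{R}^n$. Since the $x_i$ are distinct points of $\Pi$, every vector in $\mathbb{R}^n$ is realised by some continuous $c$, for instance a piecewise linear interpolant. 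So $J(\mu)$ equals the supremum over $y\in\mathbb{R}^n$ of $\sum_i a_i y_i-\max_i(y_i-\sigma_i)$. I will prove the formula by a matching upper bound and lower bound.

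For the upper bound, I use that the $a_i$ are nonnegative with $\sum_i a_i=1$. For every $y$ this gives
\begin{equation*}
\sum_i a_i y_i = \sum_i a_i (y_i-\sigma_i) + \sum_i a_i\sigma_i \le \max_i (y_i-\sigma_i) + \sum_i a_i \sigma_i ,
\end{equation*}
so $\Phi(c)\le \sum_i a_i\sigma_i$ for every $c\in C^0(\Pi)$.

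For the lower bound, I choose $c\in C^0(\Pi)$ with $c(x_i)=\sigma_i$ for each $i$. Then $\Lambda(c)=\max_i(\sigma_i-\sigma_i)=0$ and $\int c\,d\mu=\sum_i a_i\sigma_i$. Hence the supremum is attained and equals $\sum_i a_i(0\lor b'(x_i))$.

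The only delicate point is the applicability of Theorem~\ref{thm:4.1} to every $c\in C^0(\Pi)$. The theorem is stated for continuous $c$, and $\Lambda(c)$ depends only on the values of $c$ on $\{b=0\}$, so the reduction to a finite-dimensional problem is legitimate. I should also state explicitly that the $a_i$ are nonnegative, since $\mu$ is a probability measure; this is exactly what the upper bound uses. Beyond that, the argument is a one-line convexity (Jensen-type) estimate and presents no real obstacle.
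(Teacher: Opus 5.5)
Your proof is correct and follows essentially the same route as the paper. The upper bound is the same estimate, namely that a convex combination of the $c(x_i)-(0\lor b'(x_i))$ lies below their maximum; the paper phrases it by splitting $C^0(\Pi)$ into the sets $C_j$ on which the maximum in $\Lambda(c)$ is attained at $x_j$. The lower bound likewise uses a continuous $c$ for which $c(x_i)-(0\lor b'(x_i))$ does not depend on $i$: the paper takes this common value to be $1$, you take it to be $0$, and the difference is immaterial. Stating $a_i\ge 0$ explicitly is a good addition, since the paper uses it tacitly when it multiplies the inequalities by $a_i$.
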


\begin{proof}
Let us pose 
\begin{equation*}
C_i := \lbrace c\in C^0(\Pi), \Lambda(c) = -(0\lor b^\prime(x_i)) + c(x_i) \rbrace \;\, \text{and} \;\, B_i := (0\lor b^\prime(x_i)).
\end{equation*}

Computing we get 
\begin{align*}
    J(\sum_{i=1}^{n} a_i \delta_{x_i}) &= \sup\limits_{c \, \in \,C^0(\Pi) } \biggl\{ \; \sum_{i=1}^{n}  a_i \; c(x_i) - \max\limits_{j \,\in\, \llbracket 1, n \rrbracket } \left(-B_j + c(x_j) \right) \biggl\} \\
    & = \max\limits_{j \,\in\, \llbracket 1, n \rrbracket } \biggl( \sup\limits_{c \,\in\, C_j } \biggl\{\sum_{i=1}^{n}  a_i \; c(x_i) + B_j - c(x_j)\biggl\} \biggl).
\end{align*}

Suppose that the previous maximum is obtained at $j = k$, we get 
\begin{align*}
    J(\sum_{i=1}^{n} a_i \delta_{x_i}) 
     &= \sup\limits_{c \,\in \, C_k} \biggl\{ \sum_{i=1, i\neq k}^{n}   a_i \; c(x_i) + (a_k -1) c(x_k) + B_k\biggl\}\\
    & = \sup\limits_{c \,\in \, C_k} \biggl\{ \sum_{i=1, i\neq k}^{n}   a_i( \; -B_i + c(x_i)) + a_iB_i + (a_k -1) c(x_k) + B_k\biggl\} \\
      & \leq \sup\limits_{c \,\in \, C_k} \biggl\{ \sum_{i=1, i\neq k}^{n}   a_i( \; -B_k + c(x_k)) + a_iB_i + (a_k -1) c(x_k) + B_k\biggl\}\\
    & = \sum_{i=1}^{n} a_iB_i.
\end{align*}

Moreover selecting $c_0 \in \widetilde{C}^0(\Pi) := \bigl\{c\in C^0(\Pi),  c(x_i) = 1 + B_i \text{ for all } i\bigr\}$, we get
\begin{equation*}
    \biggl(\sum_{i=1}^{n}  a_i \; c_0(x_i)\biggl) - \max\limits_{j \in \llbracket 1, n \rrbracket } \left(-B_j + c_0(x_j) \right) = \biggl(\sum_{i=1}^{n}  a_i \; (1 + B_i)\biggl) - \; 1 = \sum_{i=1}^{n} a_iB_i,
\end{equation*}
from which the result follows. 
\end{proof}

To obtain a complete characterization of \(J\), it remains to treat measures whose support is not contained in the zero set of \(b\).

\begin{prop}\label{prop:4.21}
    Let $\mu \in P(\Pi)$ be a probability measure such that  $\operatorname{supp}\mu \nsubseteq \lbrace x_1, \dots, x_n\rbrace $, then
    \begin{equation*}
        J(\mu) = +\infty.
    \end{equation*}
\end{prop}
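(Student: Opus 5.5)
The plan is to exhibit a family of test potentials $c$ on which the quantity $\int_\Pi c\,d\mu - \Lambda(c)$ becomes arbitrarily large. By Theorem~\ref{thm:4.1}, $\Lambda(c)$ only depends on the values of $c$ at the finitely many zeros $x_1,\dots,x_n$ of $b$:
\[
\Lambda(c)=\max_{1\le i\le n}\bigl(-(0\lor b'(x_i))+c(x_i)\bigr).
\]
So any $c$ that vanishes at every $x_i$ has $\Lambda(c)=\max_i(-(0\lor b'(x_i)))\le 0$. Hence $\int_\Pi c\,d\mu-\Lambda(c)\ge\int_\Pi c\,d\mu$, and it suffices to find a nonnegative continuous $c$, vanishing on $\{x_1,\dots,x_n\}$, with $\int_\Pi c\,d\mu>0$. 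Replacing $c$ by $Mc$ and letting $M\to+\infty$ then gives $J(\mu)=+\infty$.

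To build such a $c$, I use the hypothesis $\operatorname{supp}\mu\nsubseteq\{x_1,\dots,x_n\}$. It gives a point $y\in\operatorname{supp}\mu$ with $y\neq x_i$ for all $i$. Since the zero set is finite, there is $r>0$ such that the ball $B(y,2r)$ contains no zero of $b$. Because $y$ lies in the support, $\mu(B(y,r))>0$. I take $c$ to be a continuous bump with $0\le c\le 1$, $c\equiv1$ on $B(y,r)$ and $c\equiv0$ outside $B(y,2r)$. Then $c(x_i)=0$ for all $i$, and
\[
\int_\Pi c\,d\mu\ge\mu(B(y,r))>0.
\]
With $c_M:=Mc$ we get $\Lambda(c_M)\le 0$, hence
\[
J(\mu)\ge M\,\mu(B(y,r))\xrightarrow[M\to\infty]{}+\infty .
\]

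There is no real obstacle. The only point needing care is that Theorem~\ref{thm:4.1} is used for every continuous $c$, including the large multiples $Mc$. Since the theorem only assumes $c\in C^0(\Pi)$, with $a,b$ as in (H3), this is legitimate. The argument only needs the upper bound $\Lambda(c_M)\le\max_i c_M(x_i)=0$, which also follows directly from Remark~\ref{rem:4.16}.
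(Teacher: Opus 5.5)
Your proof is correct and follows the same idea as the paper: freeze the values of $c$ at the zeros of $b$ so that Theorem~\ref{thm:4.1} controls $\Lambda(c)$, then add a large multiple of a nonnegative bump that carries positive $\mu$-mass away from the zeros. The paper additionally decomposes $\mu=\sum_i a_i\delta_{x_i}+\alpha\mu_0$ and normalizes $c(x_i)=1+(0\lor b'(x_i))$ so that $\Lambda(c)=1$ exactly; your choice $c(x_i)=0$, which needs only the upper bound $\Lambda(Mc)\le 0$, is a slightly leaner version of the same argument.
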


\begin{proof}

We decompose the measure $\mu$ as follows 
\begin{equation*}
    \mu=\sum_{i=1}^n a_i\,\delta_{x_i}+\alpha\,\mu_0,
\,\,\, a_i\ge 0,\,\,\, \alpha>0,\,\,\, \sum_{i=1}^n a_i+\alpha=1,\
\end{equation*}
where $\mu_0$ designs a {\color{red} non-zero} measure such that $\mu_0(\lbrace x_1, \dots, x_n\rbrace) = 0$. Moreover, we notice that
\begin{align*}
J(\mu) &= \sup\limits_{c \in C^0(\Pi) } \biggl\{ \int c \, d\mu - \Lambda(c)\biggl\} \ge \sup\limits_{c \in \Tilde{C}^0(\Pi) } \biggl\{  \int c \, d\mu - \Lambda(c)\biggl\}\\
&= \sum_{i=1}^{n} a_iB_i + \alpha\sup\limits_{c \in \Tilde{C}^0(\Pi) } \biggl\{ \int c \, d\mu_0 - \Lambda(c)\biggl\} \\
    & = \sum_{i=1}^{n} a_iB_i - \alpha \, + \alpha\sup\limits_{c \in \Tilde{C}^0(\Pi) } \int c \, d\mu_0 = + \infty.
\end{align*}

Indeed, for any {\color{red} non-zero} measure $\mu_0$ such that $\mu_0(\lbrace x_1, \dots, x_n\rbrace) = 0$,
\begin{equation*}
    \sup\limits_{c \in \Tilde{C}^0(\Pi) } \int c \, d\mu_0 = + \infty.
\end{equation*}

To prove this, let us choose a compact set $K$ such that $K\cap\lbrace x_1, \dots, x_n\rbrace = \varnothing $ with \(\mu_0(K) > 0\).
Let us pose a function \(h \in C^0(\Pi)\) such that \(h(x_i) = 1 + B_i\) for all \(i\) and a positive function \(\varphi \in C^0(\Pi)\) with \(\varphi(x_i)=0\) for all \(i\) such that \(\varphi \equiv 1\) on \(K\). For any \(M>0\), we set \(c_M := h + M\varphi\). It is clear that \(c_M \in \widetilde{C}^0(\Pi)\). As
\begin{equation*}
    \int c_M \, d\mu_0 \;\ge\; \int h \, d\mu_0 \;+\; M\,\mu_0(K)
\;\xrightarrow[M\to\infty]{}\; +\infty,
\end{equation*}
the result follows.
\end{proof}

\subsection{Explicit calculation of I}

\begin{prop}\label{prop:4.22}
Let $\mu \in P(\Pi)$ be a probability measure then
    \begin{equation*}
I(\mu) := \lim\limits_{\epsilon \rightarrow 0} I_\epsilon(\mu) = + \infty.
\end{equation*}    
\end{prop}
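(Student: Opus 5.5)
Throughout I work with the representation \eqref{8},
\[
I_\epsilon(\mu)=\sup_{v\in\Tilde{C}^{1,0}(\Pi)}\phi_{\mu,\epsilon}(v),\qquad
\phi_{\mu,\epsilon}(v)=-\int_\Pi \epsilon\, a\,(v'+v^2)+b\,v\,d\mu .
\]
The idea is the same as in the second half of the proof of Proposition~\ref{prop:3.4}. I take test functions $v_\epsilon=\epsilon^{-1/4}u$ with $u\in\Tilde{C}^{1,0}(\Pi)$ fixed. This gives
\[
\phi_{\mu,\epsilon}(v_\epsilon)=-\epsilon^{-1/4}\int_\Pi b\,u\,d\mu+O(\sqrt\epsilon),
\]
so $I(\mu)=+\infty$ as soon as some zero-mean $u$ satisfies $\int b\,u\,d\mu\neq 0$ (replace $u$ by $-u$ if needed). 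This reduces the claim to a structural statement: if $\int b\,u\,d\mu=0$ for every zero-mean $C^1$ function $u$, then the signed measure $b\mu$ is a constant multiple of $dx$. By density of $C^1$ in $C^0$ and Riesz, $b\mu=\kappa\,dx$ for some $\kappa\in\mathbb R$.

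I then split into the two possible values of $\kappa$.

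\emph{Case $\kappa\neq 0$.} Then $b\mu=\kappa\,dx$ is a signed measure of constant sign. Under (H3), $b$ takes both signs on sets of positive Lebesgue measure. On an interval where $b<0$ we have $\mu=(\kappa/b)\,dx$, which is a negative density if $\kappa>0$. On an interval where $b>0$ the same happens if $\kappa<0$. Since $\mu\ge0$, both options are contradictions.

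\emph{Case $\kappa=0$.} Then $b\mu=0$, so $\mu$ is supported on the finite set $\{b=0\}$, i.e.\ $\mu=\sum_i a_i\delta_{x_i}$. Here the first-order term vanishes and the argument must use the $\epsilon a(v'+v^2)$ term with $\epsilon$-dependent profiles, as in Proposition~\ref{prop:2.3}. I pick $i$ with $a_i>0$. For each large $K$ I choose $v\in\Tilde{C}^{1,0}(\Pi)$ with
\[
v(x_j)=0 \text{ for all } j,\qquad v'(x_i)=-K/\epsilon,\qquad v'(x_j)=0 \text{ for } j\neq i .
\]
Such a $v$ exists: take a steep localized bump near $x_i$ and correct the mean away from the zeros. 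Then
\[
\phi_{\mu,\epsilon}(v)=-\epsilon\sum_j a_j a(x_j)\bigl(v'(x_j)+v(x_j)^2\bigr)=a_i\,a(x_i)\,K .
\]
The $b\,v$ term contributes nothing because $b(x_j)=0$. Hence $I_\epsilon(\mu)=+\infty$ for every $\epsilon$, and so $I(\mu)=+\infty$. In fact the same construction with slope $-1/\epsilon^{2}$ already suffices.

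The main obstacle is the dichotomy step: making rigorous that the annihilation condition forces $b\mu=\kappa\,dx$, and handling $\kappa=0$ separately, which Proposition~\ref{prop:3.4} did not need since there $b\neq0$. I expect the atomic case to be easy once one notices that no isolatedness assumption is needed, because the support is finite.
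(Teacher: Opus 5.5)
Your proof is correct and follows the paper's argument. For the part of $\mu$ off the zeros, both use the test function $\epsilon^{-1/4}u$ and reach a sign contradiction from $b\mu=\kappa\,dx$. For measures carried by the zeros of $b$, both use a steep localized profile at an atom: the paper takes $-\frac{1}{\epsilon}\sin(x/\epsilon)$ and obtains the bound $a(0)/\epsilon$, while you get $I_\epsilon(\mu)=+\infty$ directly. The only difference is organizational: you branch on $\kappa=0$ after the Riesz step, whereas the paper first decomposes $\mu=\sum_i a_i\delta_{x_i}+\alpha\mu_0$ with $\mu_0$ charging no zero of $b$, which handles that case implicitly.
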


\begin{proof}
    First of all, let us recall that equation \eqref{8} remains unchanged, thus
    \begin{align*}
    I_\epsilon(\mu) &= \sup\limits_{w \in C^2(\Pi) } - \int_{\Pi} \epsilon \; a(x)( w''(x)+ w'(x)^2) + b(x)w'(x) \, d\mu(x) = \sup\limits_{v \in \Tilde{C}^{1,0}(\Pi)}\phi_{\mu,\epsilon}(v) ,
\end{align*}
where $\Tilde{C}^{1,0}(\Pi) = \lbrace v \in C^1(\Pi) \; with \; \int_{\Pi} v(x) dx= 0  \rbrace$ and $\phi_{\mu,\epsilon}(v) = - \int_{\Pi} \epsilon \, a\left( v'+ v^2\right) + bv\, d\mu.$

\vspace{\baselineskip}
    
We first show the result in the case of a measure that is supported in the set of zeros of $b$. We argue in the same way as in Proposition \ref{prop:2.3} to obtain the desired result as $\mu$ is a measure with at least one isolated atom $x_0=0$. We set $\delta > 0$ small enough such that $\mu_0(B(0,\delta))= \mu_0(0)$ and $\mu_0(B(0,2\delta) \setminus B(0,\delta))= 0$ and define for any $\epsilon > 0$ the periodic function $v_\epsilon$ by
        \[ v_\epsilon = \begin{cases} 
          -\frac{1}{\epsilon } \,sin(x/ \epsilon) & x\in B(0,\delta) \\
          r_\epsilon(x) & x \in B(0,2\delta)\setminus B(0,\delta) \\
          0 & x \in \Pi \setminus B(0,2\delta),
       \end{cases}
    \]
    where $r_\epsilon$ is a well-chosen function that makes the function in $\Tilde{C}^{1,0}(\Pi)$. Then it follows that
\begin{align*}
     I_\epsilon(\delta_{0}) \ge -\epsilon \; a(0)\bigl( v_\epsilon^{\prime}(0) + v_\epsilon(0)^2 \bigl) - \, b(0) \,v_\epsilon(0) =  a(0)/\epsilon \underset{\epsilon\to 0}{\longrightarrow}\; +\infty.
\end{align*}

We now reduce the analysis to measures whose support is not contained in the zero set of $b$. As before, we decompose $\mu$ as follows 
\begin{equation*}
    \mu=\sum_{i=1}^n a_i\,\delta_{x_i}+\alpha\,\mu_0,
\,\,\, a_i\ge 0,\,\,\, \alpha>0,\,\,\, \sum_{i=1}^n a_i+\alpha=1,\
\end{equation*}
where $\mu_0$ is a probability measure such that $\mu_0(\lbrace x_1, \dots, x_n\rbrace) = 0$. As in the second part of Proposition \ref{prop:3.4}, our goal is to make $I_\epsilon(\mu)$ diverge by substituting into \eqref{8} the appropriate test function $v_\epsilon$ given by
\begin{equation*}
  v_\epsilon = \frac{1}{\sqrt[4]{\epsilon}} \, u \;  \;  \; \text{with} \;  \;  \; \int_0^1 u \, dx = 0.
\end{equation*}   

The same calculations as before yield
\begin{align*}
      &\phi_{\mu,\epsilon}(v_\epsilon) = -\, \frac{\alpha}{\sqrt[4]{\epsilon}} \int_0^1 b(x) u(x) \, d\mu_0(x) + O(\sqrt\epsilon).    
\end{align*}

We end the proof by choosing $u$ such that $\int_0^1 b(x) u(x) \, d\mu_0(x)$ is positive and passing to the limit as $\epsilon$ goes to $0$. To show that such a choice of $u$ is possible, it suffices to rule out that \(\int_0^1 b(x)\,u(x)\,d\mu_0(x)=0\) for every zero-mean functions \(u\). Suppose that it holds, then $ b\, d\mu_0$ is proportional to $dx$, which follows that its support is exactly $\Pi$. Thus, assuming that \(b\,d\mu_0 = c\,dx\) for some constant \(c\neq 0\), we get
\begin{equation*}
c \;=\; \displaystyle \int_{\{\,b>0\,\}} b\, d\mu_0 \, \bigl/ \, \displaystyle |\{b>0\}|
\;>\; 0 \,\,\, and \,\,\,
c \;=\; \displaystyle \int_{\{\,b<0\,\}} b\, d\mu_0 \, \bigl/ \, \displaystyle |\{b<0\}|
\;<\; 0,
\end{equation*}
from which the contradiction follows.
\end{proof}

\section{The mixed case}

\subsection{Asymptotic of the {\color{red}principal} eigenfunction}

In Sections \ref{sec:2} and \ref{sec:4}, we have treated this problem, respectively, in the case where $b$ is zero, and in the case where $b$ changes sign on $\Pi$ and has finitely many zeros. In this section, we consider a more general mixed situation: we assume that $b$ changes sign on $\Pi$, has a finite number of isolated zeros, and a finite number of intervals on which it vanishes.

\vspace{\baselineskip}

We aim to prove the following theorem.

\begin{thm}\label{thm:5.1}
   Under the assumptions stated above, the {\color{red}principal} eigenvalue $\lambda_\epsilon$ in \eqref{2} converges as
$\epsilon$ goes to $0$ to
\begin{equation}
\label{24}
\lim\limits_{\epsilon \rightarrow 0} \lambda_\epsilon = \max\limits_{x, \, b(x)=0 } \left(-(0\lor b^\prime(x)) + c(x) \right).
\end{equation}  
\end{thm}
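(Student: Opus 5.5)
The plan is to reproduce the architecture of Section~\ref{sec:4}, isolating the single new ingredient: the vanishing intervals $[\alpha_j,\beta_j]$ of $b$. On such an interval we have $b'\equiv 0$, so the right-hand side of \eqref{24} is $\max\bigl(\max_i(-(0\lor b'(x_i))+c(x_i)),\ \max_j \max_{[\alpha_j,\beta_j]} c\bigr)$. The intervals should therefore behave like Section~\ref{sec:2}, where the limit is $\max c$ on the region, while the isolated zeros behave as in Theorem~\ref{thm:4.1}.

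\textbf{Step 1: compactness and Hamilton--Jacobi limit.} Proposition~\ref{prop:4.5} applies verbatim, since its proof used only the bounds \eqref{11}--\eqref{12} and no property of the zero set. Along a subsequence, $\lambda_\epsilon\to\lambda$ and $W_\epsilon\to W$ uniformly, with $W$ a viscosity solution of $H(W',x)=0$.

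\textbf{Step 2: Aubry set, representation, significant components.} I will show that $A_H=\{b=0\}$, which is now a finite union of points and closed intervals. Zeros lie in $A_H$ by the stationary-curve argument, and points with $b\neq0$ are excluded exactly as in Proposition~\ref{prop:4.8}. This gives $W(z)=\inf_{b(x)=0}(d_H(z,x)+W(x))$.

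The order $\preceq$ of Lemma~\ref{lem:4.11} is then defined on the connected components of $\{b=0\}$, which is legitimate because $d_H$ vanishes between points of the same interval. Antisymmetry between distinct components still follows from Remark~\ref{rem:4.10}, because $b$ changes sign. A minimal component $K$ exists, and near $K$ we have $W=d_H(\cdot,K)+\mathrm{const}$. Lemma~\ref{lem:4.13} extends to components in the same way.

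\textbf{Step 3: upper bound $\limsup\lambda_\epsilon\le\Lambda(K)$.} If $K$ is an isolated zero, Propositions~\ref{prop:4.14} and~\ref{prop:4.18} give the value. The construction there is local, so nearby intervals play no role once the neighbourhood is small.

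If $K=[\alpha,\beta]$ is an interval, then $W$ is constant on $K$. I will copy the flat-zero construction of Proposition~\ref{prop:4.18}. Set $W^-_\delta=0$ on $[\alpha,\beta]$ and use quartic pieces $-\delta(x-\alpha)^4$ and $-\delta(x-\beta)^4$ outside, adapted to the sign of $b$ just outside $K$. This makes $W-W^-_\delta$ attain its minimum on $K$.

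The delicate point is that the minimum is not strict, so Lemma~\ref{lem:4.6} cannot be applied as stated. I would first try to replace Lemma~\ref{lem:4.6} by its set version: a uniform limit with a minimum on a compact set $K$ forces minimizers $x_\epsilon$ of $W_\epsilon-W^-_\delta$ to accumulate on $K$. Evaluating \eqref{10} at $x_\epsilon$ then gives $\lambda_\epsilon\le c(x_\epsilon)+o(1)$, hence $\limsup\lambda_\epsilon\le\max_K c$.

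\textbf{Step 4: lower bound for every zero component.} I will rerun the penalization argument from the proof of Theorem~\ref{thm:4.1}, with $k\ge0$ vanishing only near a chosen component $K_0$. As before, $\mu_\epsilon\le\lambda_\epsilon$, and the new Aubry set reduces to $K_0$ because $k>0$ on every other component.

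If $K_0$ is a point, the previous argument applies unchanged. If $K_0$ is an interval, the plan differs, since I need $\liminf\lambda_\epsilon\ge\max_{K_0}c$ and not merely a value at one point. Let $y^*\in K_0$ be a maximizer of $c$ on $K_0$. I would take $k$ vanishing only on a small neighbourhood $U$ of $y^*$ inside $K_0$, so that the penalized Aubry set is contained in $U$. Then I use the test function $W^+_\delta=\delta(x-y^*)^2$. Because $b\equiv0$ on $U$, this gives $H(W^{+\prime}_\delta,x)\ge0$ and $a W^{+\prime\prime}_\delta\to0$, hence $\liminf\mu_\epsilon\ge c(y^*)-o_U(1)$.

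Taking the maximum over all components gives $\liminf\lambda_\epsilon\ge$ the right-hand side of \eqref{24}. Combined with Step~3, this shows every subsequential limit equals that value. I expect Step~3 to be the main obstacle, namely handling minimizers of $W_\epsilon-W^-_\delta$ when $W$ is flat on a whole interval.
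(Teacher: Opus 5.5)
Your proposal is correct up to two small repairs noted at the end. Steps 1--3 match the paper's proof: Proposition~\ref{prop:4.5} verbatim, $A_H=\{b=0\}$, the order on components via Lemma~\ref{lem:5.2}, a minimal component $K$, and for $K=[\alpha,\beta]$ a function $W^-_\delta$ vanishing on $K$ whose difference $W_\epsilon-W^-_\delta$ has minimizers accumulating on $K$, giving $\limsup\lambda_\epsilon\le\max_K c$.

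The real difference is the lower bound on an interval. You rerun the penalization from the proof of Theorem~\ref{thm:4.1}, with $k$ vanishing only near a maximizer of $c$. The paper instead applies $W^+_\delta=\delta(x-a)^2$ directly to the \emph{unpenalized} limit $W$ at interior points $a$ of the interval, which only needs $W$ to be constant there. Item 2 of Lemma~\ref{lem:5.2} (or simply the subsolution inequality $a(x)W'(x)^2\le 0$ a.e.\ on the interior of $\{b=0\}$ for the Lipschitz limit $W$) makes $W$ constant on \emph{every} vanishing interval, not only the minimal one. So the direct argument gives $\liminf\lambda_\epsilon\ge\max_{I_j}c$ for all $j$ at once. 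For intervals the penalization buys nothing, and it forces you to re-establish $\Lambda=0$ and the constancy of the penalized limit on $U$, which is the same local argument. What your version does buy is explicitness. The paper writes out the lower bound only for the minimal component and leaves the other components implicit. You correctly insist that a lower bound is needed at every component, and penalization is the right tool for the isolated zeros.

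The two repairs are as follows. First, if $c|_{K_0}$ attains its maximum only at an endpoint, use interior points converging to it. Second, in Step 3 do not glue $-\delta(x-\alpha)^4$ exactly at $\alpha$ when $b$ vanishes there more than cubically or flatly from outside. In that case $H(W^{-\prime}_\delta(x),x)=16\delta^2a(x)(x-\alpha)^6+4\delta b(x)(x-\alpha)^3>0$ arbitrarily close to $K$ on an attracting side. As in Proposition~\ref{prop:4.18}, start the quartic at a point $\alpha'<\alpha$ with $b(\alpha')\neq 0$, where $W$ is still constant. On a repelling side take $W^-_\delta\equiv 0$, since there $W>W(K)$ strictly. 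Then let $\alpha'\to\alpha$.
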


\begin{proof}
    This result is based on the arguments developed throughout the preceding section. We first observe that the results of Proposition \ref{prop:4.5} remain valid in this new context, thus for any sequence \( \epsilon \to 0 \), there exists a subsequence such that \( \lambda_{\epsilon} \) converges to a limit \( \lambda \) and such that the corresponding transformed eigenfunction \( W_{\epsilon} \) converges uniformly on \( \Pi \) to $W$, a viscosity solution of $H(W'(x), x) = 0$ on $\Pi$. We now aim to represent $W$ as in Proposition \ref{prop:4.8}. Since relations \eqref{13}, \eqref{14}, \eqref{15}, and \eqref{16} remain valid, characterizing $W$ amounts to determining the Aubry set in this new context. As before, it is obvious that $\lbrace x, \, b(x) = 0 \rbrace \subset A_H$. To prove the reverse inclusion, we argue as in the proof of Proposition \ref{prop:4.8}. Using the fact that \( b \) changes sign on \( \Pi \), the proof proceeds exactly as before and yields to $A_H \subset \lbrace x, \, b(x) = 0 \rbrace $. We now extend the arguments after Remark \ref{rem:4.10} to this more general case, for which we need the following lemma.

\begin{lem}\label{lem:5.2}\renewcommand{\labelenumi}{\textit{\arabic{enumi}.}}
Under the assumptions stated above, the following two statements hold.
\begin{enumerate}
    \item \( d_H(x,y) = d_H(y,x) = 0 \) if and only if \( x \) and \( y \) 
    belong to the same maximal connected component of the set 
    where \( b \) vanishes.
    \item \( W \) is constant on each interval where \( b \) vanishes.
\end{enumerate}
\end{lem}

\begin{proof}

We have already proved this result for isolated zeros in Remark \ref{rem:4.10}. We begin the proof of $\mathit{1}$, showing that if \( x \) and \( y \) belong to the same interval where $b$ vanishes, then \( d_H(x,y) = d_H(y,x) = 0 \).  Let us set $x, y \in I$, an interval where $b$ is null. We define the sequence \( \gamma_n : [0, T_n] \to \mathbb{R} \) with \( T_n \to \infty \) by the linear constant speed path staying inside the interval $I$
\[
\gamma_n(t) = x + \frac{y - x}{T_n}\, t.
\]

The implication follows as
\[
\int_0^t L(-\gamma'(s),\gamma(s))ds \leq C \int_0^{T_n} |\gamma'_n(t)|^2\, dt
= C \int_0^{T_n} \left|\frac{y - x}{T_n}\right|^2 dt
= C \, \frac{|y - x|^2}{T_n}
\xrightarrow[n \to \infty]{} 0.
\]

Finally, to prove the reversed implication we suppose that \( d_H(x,y) = d_H(y,x) = 0 \) and that $x$ and $y$ belongs to two different maximal connected components. By the same argument as the one following Remark \ref{rem:4.10}, we find an interval \( I \) on which \( b \neq 0 \) and such that \( I \subset A_H \), which yields a contradiction and completes the proof of $\mathit{1}$.

\vspace{\baselineskip}

To prove $\mathit{2}$, suppose that there exists an interval 
\( I \subset \{ x, b(x) = 0 \} \),
and take \( x, y \in I \).
Since \( x \) and \( y \) belong to the same connected component, 
it follows from $\mathit{1}.$ that \( d_H(x,y) = d_H(y,x) = 0 \).
Finally, using the representation formula $W(z) = \min_{x,\, b(x)=0} \big( d_H(z,x) + W(x) \big)$, we obtain \( W(x) \le W(y) \) and \( W(y) \le W(x) \),
from which the equality follows.
\end{proof}

Let us define $\mathcal{I} := \{ x_i \}_{i=1}^n \,\cup\, (I_j)_{j=1}^m$, where \( \{ x_i \}_{i=1}^n \) denotes the finite set of isolated zeros of \( b \), and
\( ( I_j )_{j=1}^m \) denotes the finite family of maximal intervals on which \( b \) vanishes. Since \( W \) is constant on each \( I {\color{red}\subset}  \, \mathcal{I} \), we define
\( W(I) := W(x) \) for any \( x \in I \) and extend the Mañé distance to sets \(I,J\subset \Pi\) by setting
\begin{equation*}
d_H(I,J)
= \inf_{\substack{t>0 \\ \eta \in S_I^J(t)}}
\int_0^t L\bigl(-\eta'(s), \eta(s)\bigr)\,ds,
\end{equation*} 
where $S_I^J(t) = \left\{ \eta \in C^1_p([0,t]) \; ; \; \eta(0) \in I,\ \eta(t) \in J \right\}.$ We define a relation \( \preceq \) on \( \mathcal{I} \) by setting
\[
I \preceq J
\quad \text{if and only if} \quad
W(J) = d_H(J,I) + W(I).
\]

Item \(\mathit{1}.\) of Lemma \ref{lem:5.2} guarantees that the relation \( \preceq \) remains an order relation in this more general setting, preserving transitivity. Moreover, since \( |\mathcal{I}| \) is finite, we can still extract a minimal component, which is either an isolated zero or an interval on which \( b \) vanishes. If the minimal component is of the first type, we get the result arguing as in the previous section. Without loss of generality, we now suppose that the minimal component is the interval \( [\alpha, \beta] \), on which \( W \) vanishes. We first establish the lower bound.    

\vspace{\baselineskip}

Let $a \in (\alpha, \beta)$ and pose $W_\delta^{+}(x) = \delta (x-a)^2$ with $\delta$ a small positive parameter. It is clear that $ W(x) < W_\delta^+(x)$ on a punctured neighborhood of $a$ and with equality at $a$. Moreover, as $H(W_\delta^{+\prime}(x), x) \ge 0$ on this neighborhood, using $W_\delta^{+\prime\prime}(x) = 2 \delta$ we get $ c(a) \leq \lim\limits_{\epsilon \rightarrow 0} \lambda_\epsilon$. As the preceding inequality holds for any $a \in \,\, (\alpha, \beta)$, using the continuity of $c$ we get
\begin{equation*}
\max\limits_{x \, \in \,[\alpha,\beta]} c(x) \leq \lim\limits_{\epsilon \rightarrow 0} \lambda_\epsilon.
\end{equation*}  

We end the proof by explaining how to obtain the upper bound. The proof is similar to the previous ones and relies on the construction of \( W_\delta^{-}(x) \) given in Propositions \ref{prop:4.14} and \ref{prop:4.18}. To obtain the result, we choose \( W_\delta^{-}(x) \) to be equal to zero on the minimal component and strictly smaller than \( W_0 \) in a neighborhood of this component, while satisfying \( H\bigl(W_\delta^{-\,\prime}(x), x\bigr) \le 0 \).

\vspace{\baselineskip}

In the case where the left- or right-hand $n$-th derivative of $b$ is non-zero for $n\ge 2$, the function \( W_\delta^{-}(x) \) is constructed for $x < \alpha$ or $ x > \beta$ through the constructions of Proposition \ref{prop:4.14}. In the case where all left- or right-hand derivatives are zero, the function \( W_\delta^{-}(x) \) is constructed for $x < \alpha$ or $ x > \beta$ through the constructions described in Proposition \ref{prop:4.18}.
Moreover, since $b$ is $C^1(\Pi)$, the constructed functions are of class \( C^2(\Pi) \). Using such test functions, {\color{red} arguing as in Proposition \ref{prop:4.15}} we obtain that there exists {\color{red}$(x_\epsilon)_\epsilon$} a sequence of {\color{red} minimum point of $W_\epsilon - W^-_\delta$} converging to a point $x_0 \in [\alpha, \beta]$ such that  
\begin{equation*}  \lim\limits_{\epsilon \rightarrow 0} \lambda_\epsilon \leq c(x_0).
\end{equation*}  

Roughly bounding it from above by the maximum leads us to the desired upper bound.
\end{proof}
    
\begin{rem}\label{rem:5.3}
    The result of Corollary \ref{cor:4.19} remains valid in this new case. Indeed, if the maximum in \eqref{24} is attained only at a zero of $b$, $I = {x_0}$ or on an interval where $b$ vanishes, $I = [\alpha, \beta]$, $W_\epsilon$ converges uniformly up to adding a constant to $W(x) = d_H(x,I)$. Moreover, if $b$ vanishes everywhere on $\Pi$, then the previous result provides an alternative proof of \eqref{4} and even yields more. Indeed, in this case, $W_\epsilon$ converges uniformly, up to an additive constant, to $W(x)=0$.
\end{rem}

\subsection{Explicit calculation of J}

We now establish an explicit calculation of $J$ in the general mixed case.

\begin{thm}\label{thm:5.4}
    Let us denote by $\lbrace x_i \rbrace_{i\in [1,n]}$ finite number of zeros of $b$ and by $\lbrace I_j \rbrace_{j\in [1,m]}$ the finite number of maximal intervals where $b$ is null. Then, for any probability measure $\mu \in P(\Pi)$ supported in these two sets with the following decomposition
\begin{align*}
    \mu = \sum_{i=1}^{n} a_i \delta_{x_i} + \sum_{j=1}^{m} b_i \mu_{j} \;\;\; with \;\;\; \sum_{i=1}^{n} a_i +\sum_{j=1}^{m} b_j = 1 \;\;\; and\;\;\; supp(\mu_j) \subset I_j
\end{align*}
we have
\begin{equation*}
    J(\mu) = \sum_{i=1}^{n} a_i (0\land b^\prime(x_i)).
\end{equation*}
Moreover, for any probability measure \( \mu \in P(\Pi) \) such that
\( \operatorname{supp} \mu \nsubseteq \{x, \, b(x) = 0 \} \),
\[
J(\mu) = +\infty.
\]
\end{thm}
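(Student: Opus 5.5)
The plan is to reduce everything to the explicit formula for $\Lambda$ given by Theorem \ref{thm:5.1}, exactly as in Propositions \ref{prop:4.20} and \ref{prop:4.21}. Set $Z:=\{x,\, b(x)=0\}=\{x_i\}_{i=1}^n\cup\bigcup_{j=1}^m I_j$, which is closed. Define on $Z$ the function $B(x):=0\lor b'(x)$. Since $b\in C^1(\Pi)$ vanishes identically on each $I_j$, we have $b'\equiv 0$ on $I_j$, endpoints included, so $B\equiv 0$ there. The isolated zeros are at positive distance from the intervals, so $B$ is continuous on the closed set $Z$.

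By Theorem \ref{thm:5.1},
\begin{equation*}
J(\mu)=\sup_{c\,\in\, C^0(\Pi)}\Bigl\{\int_\Pi c\,d\mu-\max_{x\in Z}\bigl(c(x)-B(x)\bigr)\Bigr\}.
\end{equation*}
I will prove the formula with $0\lor b'(x_i)$, consistent with Proposition \ref{prop:4.20} and the table in the introduction. The symbol $\land$ in the statement is read as $\lor$: with $\land$, the value would contradict the case $m=0$, which is Proposition \ref{prop:4.20}.

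\textbf{Upper bound.} Let $\mu$ be supported in $Z$. For every $c\in C^0(\Pi)$,
\begin{equation*}
\int c\,d\mu=\int_Z (c-B)\,d\mu+\int_Z B\,d\mu\le \max_Z (c-B)+\sum_{i=1}^n a_i B(x_i).
\end{equation*}
Here the interval parts $\mu_j$ contribute nothing to $\int B\,d\mu$ because $B=0$ on $I_j$, and $\mu(Z)=1$. Hence $J(\mu)\le \sum_i a_i B(x_i)$.

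\textbf{Lower bound.} By Tietze's extension theorem, choose $c_0\in C^0(\Pi)$ with $c_0=1+B$ on $Z$. Then $\Lambda(c_0)=1$ and
\begin{equation*}
\int c_0\,d\mu=\sum_i a_i\bigl(1+B(x_i)\bigr)+\sum_j b_j\cdot 1=1+\sum_i a_iB(x_i),
\end{equation*}
which gives equality.

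\textbf{The case $\operatorname{supp}\mu\nsubseteq Z$.} Since $Z$ is closed, this forces $\mu(\Pi\setminus Z)>0$. By inner regularity there is a compact set $K\subset \Pi\setminus Z$ with $\mu(K)>0$. Take $h=c_0$ as above and, by Urysohn's lemma, $\varphi\in C^0(\Pi)$ with $0\le\varphi\le 1$, $\varphi=0$ on $Z$ and $\varphi=1$ on $K$. Put $c_M:=h+M\varphi$. Then $c_M=1+B$ on $Z$, so $\Lambda(c_M)=1$ for every $M$, while
\begin{equation*}
\int c_M\,d\mu\ge \int h\,d\mu+M\mu(K)\longrightarrow +\infty .
\end{equation*}
Hence $J(\mu)=+\infty$.

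The only delicate point is the continuity of $B$ on $Z$, which is needed both for the Tietze extension and for $\max_Z(c-B)$ to be meaningful. This relies on $b\in C^1$ forcing $b'=0$ at the endpoints of each $I_j$. I will check this carefully, together with the fact that Theorem \ref{thm:5.1} indeed assigns the value $c(x)$, i.e.\ $B=0$, to points of the intervals $I_j$. Everything else is routine bookkeeping.
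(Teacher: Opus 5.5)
Your proof is correct. Its overall skeleton is the paper's: insert the formula of Theorem \ref{thm:5.1} for $\Lambda$, pin the test function on $Z=\{b=0\}$ to reach the value, and add $M\varphi$ supported off $Z$ for the divergent case, as in Proposition \ref{prop:4.21}. The details differ, and they favour your version.

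For the upper bound, the paper uses convexity of $J$, namely $J(\mu)\le\sum_i a_iJ(\delta_{x_i})+\sum_j b_jJ(\mu_j)$. This relies on the values $J(\delta_{x_i})$ and $J(\mu_j)=0$, which are not re-derived in the mixed setting. Your pointwise inequality $\int c\,d\mu\le\max_Z(c-B)+\int B\,d\mu$ gives the bound directly for every $\mu$ carried by $Z$.

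For the lower bound, the paper prescribes $c_0$ at the isolated zeros and takes it null on each $I_j$. Read literally, this only yields $\sum_i a_iB(x_i)-\sum_j b_j$. Your choice $c_0=1+B$ on all of $Z$, so that $c_0=1$ on the intervals and $c_0-B$ is constant on $Z$, is what actually attains the supremum. The same normalization, $c=1$ on the $I_j$ rather than merely $\max_{I_j}c=1$, is what makes the $\mu_j$-terms drop out in the divergent case, and your $c_M$ respects it.

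Reading $\land$ as $\lor$ is also right. The table, Proposition \ref{prop:4.20} and the constants $B_i$ in the paper's own proof all use $\lor$. Moreover, at an isolated zero with $b'(x_i)<0$, the $\land$ version would give $J(\delta_{x_i})<0$, contradicting $J\ge 0$.

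Finally, the point you call delicate is not. Since $b\in C^1(\Pi)$, the function $0\lor b'$ is continuous on all of $\Pi$, so $c_0:=1+(0\lor b')$ is already in $C^0(\Pi)$ and Tietze is unnecessary. The fact that $b'=0$ on the $I_j$ is needed only to see that the intervals contribute nothing to $\int B\,d\mu$.
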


\begin{proof}
    We begin proving the first equality. Using the convexity of $J$

\begin{equation*}
    J(\mu) \leq \sum_{i=1}^{n} a_i\, J(\delta_{x_i})
    + \sum_{j=1}^{m} b_j\, J(\mu_j) =  \sum_{i=1}^{n} a_i \,(0 \wedge b'(x_i)).
\end{equation*}
    Moreover, taking a test function $c_0$ such that $-(0\land b^\prime(x_i)) + c_0(x_i) = 1$ for any isolated zeros $x_i$ and null on each interval $I_j$, we attain that supremum.
    
\vspace{\baselineskip}

To show the second equality, we pose 
\begin{equation*}
    \mu = \sum_{i=1}^{n} a_i \delta_{x_i} + \sum_{j=1}^{m} b_i \mu_{j} + \alpha \mu_0 \,\,\, where \,\,\, \alpha > 0 \,\,\, and \,\,\, \mu_0 \biggl( \,\bigcup_{i=1}^{n} \lbrace x_i\rbrace \cup \bigcup_{j=1}^{m} I_j\biggl)=0
\end{equation*}
and define $\widetilde{C}^0_1(\Pi) := \bigl\{c\in C^0(\Pi),  -(0\land b^\prime(x_i)) + c_0(x_i) = 1 \text{ for all } i, \max\limits_{x \in I_j} c(x) = 1 \text{ for all } j\bigr\}$. Then
\begin{align*}
    &J(\mu) = \sup\limits_{c \in C_0(\Pi) } \left( \int c \, d\mu - \Lambda(c)\right) \ge \sum_{i=1}^{n} a_iB_i + \alpha\sup\limits_{c \in \widetilde{C}^0_1(\Pi) } \left( \int c \, d\mu_0 - \Lambda(c)\right) \\
    & = \sum_{i=1}^{n} a_iB_i - \alpha \, + \alpha\sup\limits_{c \in \widetilde{C}^0_1(\Pi) } \int c \, d\mu_0 = + \infty,
\end{align*}
as $c$ could be chosen arbitrary large on $supp(\mu_0)$ .
\end{proof}

\subsection{Calculation of I}

We finish this article dealing with the asymptotic behavior of $I_\epsilon(\mu)$ under our mixed conditions. We recall that, in our case, we are interested in calculating the following quantity
\begin{align*}
    I_\epsilon(\mu) = \sup\limits_{v \in \Tilde{C}_0^1(\Pi) } \phi_{\mu,\epsilon}(v) &\,\,\, \text{with} \,\,\, \Tilde{C}^{1,0}(\Pi) = \lbrace v \in C^1(\Pi) \; with \; \int_0^1 v(x) dx= 0  \rbrace \,\,\, \text{and}
    \\ &  \phi_{\mu,\epsilon}(v) = - \int_{\Pi} \left(\epsilon \; a\left( v'+ v^2\right) + bv \right) \, d\mu.
\end{align*}

We assume that the zero set of $b$ contains at least one interval $I \nsubseteq \Pi$. Building on the preceding analysis, we characterize the limit of $I_\varepsilon(\mu)$ and summarize the result as follows. Arguing in the same way as in the second part of the proof of Proposition \ref{prop:4.22}, if $\mu \in P(\Pi)$ is a probability measure such that $\operatorname{supp}(\mu) \not\subset \{x,\, b(x) = 0\}$, then
    \[
        \lim_{\varepsilon \to 0} I_\varepsilon(\mu) = I(\mu) = +\infty.
    \]
    
Hence, we may restrict our study to the case where $\operatorname{supp}(\mu) \subset \{x,\, b(x)=0\}$. If the intersection of $\operatorname{supp}(\mu)$ with the set of isolated zeros of $b$ is nonempty, the first part of the proof of Proposition \ref{prop:4.22} yields
\[
        \lim_{\varepsilon \to 0} I_\varepsilon(\mu) = I(\mu) = +\infty.
\]

Thus, the last case to consider is that of measures whose support is contained in the intervals where $b$ vanishes. In this case, Propositions \ref{prop:2.4} and \ref{prop:2.8} together with Corollaries \ref{cor:2.9} and \ref{cor:2.11} provide us sufficient conditions for divergence and show that equality \eqref{1} is not guaranteed, neither for Dirac masses, nor for certain absolutely continuous or singular measures.

{\color{red}
\section{$\Gamma$-Convergence and Asymptotic Large Deviation Principles}

\begin{defn}
Let $(\mu_n)_{n \in \mathbb{N}}$ be a sequence of measures on the circle $\Pi$. We say that $(\mu_n)_{n \in \mathbb{N}}$ converges \emph{weakly} to $\mu \in P(\Pi)$, and we write $\mu_n \rightharpoonup \mu \quad \text{as } n \to \infty$, if
\[
\lim_{n \to \infty} \int_\Pi f \, d\mu_n = \int_\Pi f \, d\mu
\quad \text{for all } f \in C^0(\Pi).
\]
\end{defn}

\medskip

\begin{defn}
Let $(I_n)_{n \in \mathbb{N}}$ be a sequence of functionals $I_n : P(X) \to (-\infty,+\infty]$, and let $I : P(X) \to (-\infty,+\infty]$. We say that $(I_n)_n$ $\Gamma$-converges to $I$ with respect to the weak topology on $P(\Pi)$, and we note $I_n \xrightarrow{\Gamma} I$, if the following two conditions hold for every $\mu \in P(\Pi)$:

\noindent
(i) (\emph{$\Gamma$-liminf inequality}) For every sequence $(\mu_n)$ such that $\mu_n \rightharpoonup \mu$ weakly in $P(\Pi)$, it holds that
\[
I(\mu) \leq \liminf_{n \to \infty} I_n(\mu_n).
\]

\noindent
(ii) (\emph{$\Gamma$-limsup inequality}) There exists a sequence $(\mu_n)$ such that $\mu_n \rightharpoonup \mu$ weakly in $P(\Pi)$ and
\[
I(\mu) \geq \limsup_{n \to \infty} I_n(\mu_n).
\]

\end{defn}

\begin{prop}\label{111}
Assume that $(I_n)_n$ $\Gamma$-converges to $I$ with respect to the weak topology on $P(\Pi)$. Then, for any open set $O \subset P(\Pi)$,
\[
\limsup_{n\to\infty} \inf_{\mu \in O} I_n(\mu)
\le
\inf_{\mu \in O} I(\mu).
\]
Moreover, for any closed set $F \subset P(\Pi)$,
\[
\liminf_{n\to\infty} \inf_{\mu \in F} I_n(\mu)
\ge
\inf_{\mu \in F} I(\mu).
\]
\end{prop}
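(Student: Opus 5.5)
The open-set bound will follow from the $\Gamma$-limsup inequality through a recovery sequence. The closed-set bound will follow from the $\Gamma$-liminf inequality combined with compactness of $P(\Pi)$ in the weak topology. For the latter I will use that $\Pi$ is compact, so $P(\Pi)$ is weakly compact and metrizable by Prokhorov/Riesz, and that sequences suffice.

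\emph{Open sets.} Fix $O$ open. If $\inf_O I=+\infty$ there is nothing to prove. Otherwise, for any $\mu\in O$ with $I(\mu)<\infty$, the $\Gamma$-limsup condition gives a sequence $\mu_n\rightharpoonup\mu$ with $\limsup_n I_n(\mu_n)\le I(\mu)$. Since $O$ is open, $\mu_n\in O$ for all $n$ large. Hence
\[
\limsup_n\inf_O I_n\le \limsup_n I_n(\mu_n)\le I(\mu).
\]
Taking the infimum over $\mu\in O$ gives the first inequality.

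\emph{Closed sets.} Fix $F$ closed and set $\ell:=\liminf_n\inf_F I_n$. We may assume $\ell<+\infty$. Extract a subsequence $n_k$ along which $\inf_F I_{n_k}\to \ell$. For each $k$, pick $\mu_k\in F$ with
\[
I_{n_k}(\mu_k)\le \inf_F I_{n_k}+1/k,
\]
replacing this by $I_{n_k}(\mu_k)\le -k$ if the infimum is $-\infty$. By weak compactness, a further subsequence satisfies $\mu_k\rightharpoonup\mu\in F$, using that $F$ is closed.

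The main obstacle is that the $\Gamma$-liminf inequality is stated for a full sequence indexed by $n$, not for a subsequence. To handle this, I build a full sequence $\tilde\mu_n$ equal to $\mu_k$ when $n=n_k$ and equal to $\mu$ otherwise. Then $\tilde\mu_n\rightharpoonup\mu$, and the liminf inequality gives $I(\mu)\le\liminf_n I_n(\tilde\mu_n)$.

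This alone does not isolate the subsequence, since the indices $n\ne n_k$ carry $I_n(\mu)$, which may be small. The standard remedy is to note that $\Gamma$-convergence of $(I_n)$ implies $\Gamma$-convergence of every subsequence $(I_{n_k})$ to the same limit. The liminf part is inherited directly, by applying the definition to a padded sequence whose remaining entries keep the liminf over the subsequence controlled. Equivalently, the definition is understood with $n$ replaced by any subsequence, as is customary.

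Applying the liminf inequality to $(I_{n_k})$ along $\mu_k\rightharpoonup\mu$ yields
\[
\inf_F I\le I(\mu)\le\liminf_k I_{n_k}(\mu_k)=\ell.
\]
This is the second inequality.
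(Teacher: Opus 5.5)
Your proposal is correct and follows essentially the same route as the paper: for the open-set bound, a recovery sequence for a (near-)minimizer, which you rightly note eventually lies in $O$; for the closed-set bound, near-minimizers along a subsequence, weak compactness of $P(\Pi)$, and the $\Gamma$-liminf inequality. The obstacle you flag is not real. Your padded sequence already finishes the argument, because $\liminf_n I_n(\tilde\mu_n)\le\liminf_k I_{n_k}(\mu_k)$ holds for any subsequence, so small values at the padded indices only help, and the appeal to $\Gamma$-convergence of subsequences is unnecessary.
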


\begin{proof}
Let \(O \subset P(\Pi)\) be open. If \(\inf_{\mu \in O} I(\mu)=+\infty\), there is nothing to prove, thus we assume that this quantity is finite. Let us fix \(\eta>0\). By definition of the infimum, there exists \(\mu_\eta \in O\) such that
\[
I(\mu_\eta)\le \inf_{\mu\in O} I(\mu)+\eta.
\]
Since \(I_n \xrightarrow{\Gamma} I\) with respect to the weak topology, there exists a sequence \((\mu_n^\eta)_n\) for which the \emph{$\Gamma$-limsup inequality} is verified. Taking the \(\limsup\) as \(n\to\infty\), we get
\[
\limsup_{n\to\infty} \inf_{\mu\in O} I_n(\mu)
\le \limsup_{n\to\infty} I_n(\mu_n^\eta)
\le I(\mu_\eta)
\le \inf_{\mu\in O} I(\mu)+\eta.
\]
Letting \(\eta>0\) go to zero, we obtain the first desired inequality. We now aim to prove the second one. Let \(F \subset P(\Pi)\) be closed. If $l := \liminf_{n\to\infty}\inf_{\mu\in F} I_n(\mu)=+\infty,$ there is nothing to prove, then assume that this quantity is finite.
Passing to a subsequence if necessary, we may find \(n_k\to\infty\) such that
\[
\inf_{\mu\in F} I_{n_k}(\mu)\to \ell.
\]
For each \(k\), we choose \(\mu_k\in F\) such that
\[
I_{n_k}(\mu_k)\le \inf_{\mu\in F} I_{n_k}(\mu)+\frac1k.
\]
Since \(P(\Pi)\) is compact for the weak topology, up to extraction we may assume that
$\mu_k \rightharpoonup \mu_0 \in F$ in $P(\Pi).$ Finally, by the \(\Gamma\)-liminf inequality,
\[
\inf_{\mu\in F} I(\mu) \leq I(\mu_0)\le \liminf_{k\to\infty} I_{n_k}(\mu_k)\le \liminf_{k\to\infty}\left(\inf_{\nu\in F} I_{n_k}(\nu)+\frac1k\right)=\ell,
\]
and we obtain the result.
\end{proof}

\begin{prop}\label{1111}
    Under assumption \text{(H2)}, the functionals $(I_\varepsilon)$ $\Gamma$-converge to $J$ as $\varepsilon \to 0$, that is
\[
I_\varepsilon \xrightarrow{\Gamma} J \quad \text{as } \varepsilon \to 0.
\]
\end{prop}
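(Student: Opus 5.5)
We check the two defining inequalities of $\Gamma$-convergence separately, using the Legendre-type representation of $I_\varepsilon$ for the liminf part and the explicit computation in Proposition \ref{prop:3.4} for the limsup part. Recall that under (H2), $J(\mu)=0$ if $\mu=\omega\,ds$ and $J(\mu)=+\infty$ otherwise (Proposition \ref{prop:3.3}).

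\emph{$\Gamma$-limsup.} If $\mu\neq \omega\,ds$, then $J(\mu)=+\infty$ and the constant sequence $\mu_\varepsilon=\mu$ works trivially. If $\mu=\omega\,ds$, we again take $\mu_\varepsilon=\mu$. The proof of Proposition \ref{prop:3.4} gives the exact value
\[
I_\varepsilon(\omega\,ds)=\frac{\varepsilon C}{4}\int_0^1\frac{(\tilde a')^2}{\tilde a}\,dx,
\]
which tends to $0=J(\omega\,ds)$. We need $\tilde a=a/b\in C^1$ and of constant sign, which holds under (H2) with the regularity assumed in the introduction.

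\emph{$\Gamma$-liminf.} Let $\mu_\varepsilon\rightharpoonup\mu$. We use the representation $I_\varepsilon(\nu)=\sup_{c\in C^0}\{\int c\,d\nu-\Lambda_\varepsilon(c)\}$. For each fixed $c$, this gives
\[
\liminf_{\varepsilon\to0} I_\varepsilon(\mu_\varepsilon)\ge \lim_{\varepsilon\to0}\Big(\int c\,d\mu_\varepsilon-\lambda^c_\varepsilon\Big)=\int c\,d\mu-\Lambda(c).
\]
The first term converges by weak convergence. The second converges by Proposition \ref{prop:3.1}, which applies to continuous $c$. Taking the supremum over $c$ yields $\liminf I_\varepsilon(\mu_\varepsilon)\ge J(\mu)$.

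The only point needing care is the applicability of the Legendre representation of $I_\varepsilon$ (Feng--Kurtz) for each fixed $\varepsilon$, since that is where the general theory enters. If one prefers to stay with the variational formula \eqref{8}, there is an alternative. Given $c$, take $W_0'=(\lambda_0-c)/b$ and $v=-W_0'$, which has zero mean on the circle because $W_0$ is periodic. The proof of Proposition \ref{prop:3.1} shows that $-\mathcal L_\varepsilon e^{W_0}/e^{W_0}=c-\lambda_0+O(\varepsilon)$ uniformly, so $I_\varepsilon(\mu_\varepsilon)\ge\int c\,d\mu_\varepsilon-\lambda_0-O(\varepsilon)$, with the $O(\varepsilon)$ uniform in the measure. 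Letting $\varepsilon\to0$ and then taking the supremum over $c$ reproduces the bound. This requires $c$ smooth enough that $W_0\in C^2$, so we take $c\in C^1$ and use density in $C^0$, which is harmless in $J$ since $\Lambda$ is continuous under uniform convergence.

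I expect this uniformity in the measure to be the main obstacle. The direct version sidesteps it cleanly, because the test function depends only on $c$, not on $\mu_\varepsilon$.
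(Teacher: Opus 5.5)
Your proposal is correct and matches the paper's proof. The $\Gamma$-liminf comes from the Legendre--Fenchel representation of $I_\varepsilon$ with a fixed $c$, weak convergence, and $\Lambda_\varepsilon(c)\to\Lambda(c)$ from Proposition~\ref{prop:3.1}; the $\Gamma$-limsup comes from the trivial case $J=+\infty$ together with the constant recovery sequence at $\omega\,ds$, using $I_\varepsilon(\omega\,ds)=O(\varepsilon)$ from Proposition~\ref{prop:3.4}. In your optional direct variant, the test function corresponding to $u=e^{W_0}$ is $v=W_0'$, not $-W_0'$, and the resulting bound is automatically uniform in the measure because it is a sup-norm estimate, so there is no real obstacle there.
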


\begin{proof}
We first prove (i) (the \emph{$\Gamma$-liminf inequality}). Let $(\mu_\varepsilon)$ be a sequence such that $\mu_\varepsilon \rightharpoonup \mu$ weakly in $P(\Pi)$. Then, for every $c \in C^0(\Pi)$,
\begin{equation*}
    \liminf_{\varepsilon \to 0} I_\varepsilon(\mu_\varepsilon)
    \ge \liminf_{\varepsilon \to 0} \int_{\Pi} c \, d\mu_\varepsilon - \Lambda_\varepsilon(c)
    = \int_{\Pi} c \, d\mu - \lim_{\varepsilon \to 0} \Lambda_\varepsilon(c).
\end{equation*}
Taking the supremum over $c$ in the right-hand side of the inequality, we conclude that
\begin{equation*}
    \liminf_{\varepsilon \to 0} I_\varepsilon(\mu_\varepsilon)
    \ge \sup_{c \in C^0(\Pi)}
    \left\{
        \int_{\Pi} c \, d\mu
        - \Lambda(c)
    \right\}
    = J(\mu).
\end{equation*}

We now prove (ii) (the \emph{$\Gamma$-limsup inequality}). Observe that if $\mu \neq \omega \, ds$, then $J(\mu) = +\infty$. Hence, in this case, the inequality (ii) is trivially satisfied. It remains to consider the case $\mu = \omega \, ds$. In this case, the result follows from the pointwise convergence established previously and from the equality between the functionals $I$ and $J$. Indeed, by choosing the constant sequence $\mu_\varepsilon = \omega \, ds$, we obtain
\[
\limsup_{\varepsilon \to 0} I_\varepsilon(\mu_\varepsilon)
= \limsup_{\varepsilon \to 0} I_\varepsilon(\omega \, ds)
= I(\omega \, ds)
= J(\omega \, ds),
\]
which completes the proof.
\end{proof}

\begin{cor}\label{11113}
Under assumption \text{(H2)}, for every open set $O \subset P(\Pi)$ and closed set $F \subset P(\Pi)$
\[
\liminf_{\varepsilon\to 0}\,\liminf_{T\to\infty} \frac{1}{T}\ln \mathbb{P}(\mu_T^\varepsilon \in O)
\geq
-\inf_{\mu \in O} J(\mu),
\]
\[
\limsup_{\varepsilon\to 0}\,\limsup_{T\to\infty} \frac{1}{T}\ln \mathbb{P}(\mu_T^\varepsilon \in F)
\leq
-\inf_{\mu\in F} J(\mu).
\]
\end{cor}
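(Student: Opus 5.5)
The plan is to combine, for each fixed $\varepsilon>0$, the Donsker--Varadhan LDP for $(\mu_T^\varepsilon)_{T>0}$ with rate function $I_\varepsilon$, and then pass to the limit $\varepsilon\to0$ using Proposition~\ref{111} applied to the $\Gamma$-convergence $I_\varepsilon\xrightarrow{\Gamma}J$ of Proposition~\ref{1111}.

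\textbf{Step 1 (fixed $\varepsilon$).} For every open $O\subset P(\Pi)$ and closed $F\subset P(\Pi)$, the Donsker--Varadhan theorem recalled in the introduction gives
\[
\liminf_{T\to\infty}\frac1T\ln\mathbb P(\mu_T^\varepsilon\in O)\ge-\inf_{\mu\in O}I_\varepsilon(\mu),
\qquad
\limsup_{T\to\infty}\frac1T\ln\mathbb P(\mu_T^\varepsilon\in F)\le-\inf_{\mu\in F}I_\varepsilon(\mu).
\]
On the compact circle, with $a>0$ and $\mathcal L_\varepsilon$ uniformly elliptic, these bounds hold for any initial point. I would check that the cited Theorem~3 of \cite{DonskerVaradhan1975} indeed provides the full LDP (upper bound on closed sets, not only compact ones). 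Since $P(\Pi)$ is compact, closed and compact sets coincide here, so this causes no difficulty.

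\textbf{Step 2 (limit in $\varepsilon$).} Take $\liminf_{\varepsilon\to0}$ in the lower bound and $\limsup_{\varepsilon\to0}$ in the upper bound:
\[
\liminf_{\varepsilon\to0}\liminf_{T\to\infty}\frac1T\ln\mathbb P(\mu_T^\varepsilon\in O)\ge-\limsup_{\varepsilon\to0}\inf_{O}I_\varepsilon,
\]
\[
\limsup_{\varepsilon\to0}\limsup_{T\to\infty}\frac1T\ln\mathbb P(\mu_T^\varepsilon\in F)\le-\liminf_{\varepsilon\to0}\inf_{F}I_\varepsilon.
\]
Proposition~\ref{111} then gives $\limsup_{\varepsilon}\inf_O I_\varepsilon\le\inf_O J$ and $\liminf_\varepsilon\inf_F I_\varepsilon\ge\inf_F J$, which are exactly the two claimed inequalities.

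\textbf{Main obstacle.} Proposition~\ref{111} is stated for sequences $I_n$, while here $\varepsilon\to0$ is continuous. The remedy is to argue along an arbitrary sequence $\varepsilon_n\to0$. The proof of Proposition~\ref{1111} gives the $\Gamma$-liminf and $\Gamma$-limsup along every such sequence: the liminf part uses only $\Lambda_{\varepsilon}(c)\to\Lambda(c)$ (Proposition~\ref{prop:3.1}), and the limsup part uses the constant recovery sequence $\mu_\varepsilon=\omega\,ds$ together with $I_\varepsilon(\omega\,ds)\to0$ from Proposition~\ref{prop:3.4}. Choosing $\varepsilon_n$ to realize the $\liminf$ (respectively $\limsup$) over $\varepsilon$ on the left-hand side then yields the statement. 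A minor technical point is that $\ln 0=-\infty$ may occur for some $T$; the inequalities remain valid in the extended reals, so no further care is needed.
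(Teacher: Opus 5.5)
Your proposal is correct and matches the paper's proof: the fixed-$\varepsilon$ Donsker--Varadhan bounds are passed to the limit $\varepsilon\to0$ by applying Proposition~\ref{111} to the $\Gamma$-convergence $I_\varepsilon\xrightarrow{\Gamma}J$ of Proposition~\ref{1111}. Your extra remarks fill in details the paper leaves implicit: reducing the continuous parameter $\varepsilon$ to sequences $\varepsilon_n\to0$, and using compactness of $P(\Pi)$ so that an upper bound on compact sets covers all closed sets.
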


\begin{proof}

We only prove the first inequality, since the second one follows by an analogous argument. We recall that, for every $\varepsilon>0$, the family $(\mu_T^\varepsilon)_{T\geq 0}$ satisfies a large deviation principle on $P(\Pi)$ with speed $T$ and rate function $I_\varepsilon$. Then, by the large deviation lower bound, for every open set $O \subset X$, one has
\[
\liminf_{T\to\infty} \frac{1}{T}\ln \mathbb{P}(\mu_T^\varepsilon \in O)
\geq
-\inf_{\mu\in O} I_\varepsilon(\mu).
\]

Passing to the limit as $\epsilon$ tends to zero and using Proposition \ref{111} and  \ref{1111} we obtain
\begin{equation*}
    \liminf_{\epsilon\to0} \liminf_{T\to\infty} \frac{1}{T}\ln \mathbb{P}(\mu_T^\varepsilon \in O)
\geq
- \limsup_{\epsilon\to0} \inf_{\mu\in O} I_\varepsilon(\mu) \ge - \inf_{\mu \in O} J(\mu),
\end{equation*}
from which the desired result follows
\end{proof}

\begin{prop}\label{11112}
    Under assumption \text{(H3)}, the functionals $(I_\varepsilon)$ $\Gamma$-converge to $J$ as $\varepsilon \to 0$, that is
\[
I_\varepsilon \xrightarrow{\Gamma} J \quad \text{as } \varepsilon \to 0.
\]
\end{prop}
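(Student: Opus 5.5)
The plan splits into the $\Gamma$-liminf and $\Gamma$-limsup inequalities, following the structure of Proposition \ref{1111}. The liminf half is the same under (H3). For any sequence $\mu_\varepsilon \rightharpoonup \mu$ and any $c \in C^0(\Pi)$, the Legendre representation $I_\varepsilon(\mu_\varepsilon) \ge \int c\,d\mu_\varepsilon - \Lambda_\varepsilon(c)$ holds. Theorem \ref{thm:4.1} gives $\Lambda_\varepsilon(c) = \lambda^c_\varepsilon \to \Lambda(c) = \max_{b(x)=0}(-(0\lor b'(x)) + c(x))$, so
\[
\liminf_{\varepsilon\to 0} I_\varepsilon(\mu_\varepsilon) \ge \int c\,d\mu - \Lambda(c).
\]
Taking the supremum over $c$ then yields $J(\mu)$.

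The limsup half cannot use the constant recovery sequence, because Proposition \ref{prop:4.22} shows $I_\varepsilon(\mu) \to +\infty$ for every $\mu$. By Proposition \ref{prop:4.21}, $J(\mu) = +\infty$ whenever $\operatorname{supp}\mu \not\subset \{x_1,\dots,x_n\}$, and then there is nothing to prove. It therefore remains to treat $\mu = \sum_i a_i \delta_{x_i}$, where Proposition \ref{prop:4.20} gives $J(\mu) = \sum_i a_i B_i$ with $B_i = 0 \lor b'(x_i)$. Since $I_\varepsilon$ is convex (a supremum of affine functionals of $\mu$), it suffices to build, for each $i$, a recovery sequence $\nu^i_\varepsilon \rightharpoonup \delta_{x_i}$ with $\limsup_\varepsilon I_\varepsilon(\nu^i_\varepsilon) \le B_i$. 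One then takes $\mu_\varepsilon = \sum_i a_i \nu^i_\varepsilon$.

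For a single zero $x_0$, the recovery measure will be the quasi-stationary profile $\nu_\varepsilon = f_\varepsilon\,dx$ with $f_\varepsilon$ smooth and positive, concentrated at scale $\sqrt{\varepsilon}$ (or the appropriate scale for higher order) around $x_0$. To evaluate $I_\varepsilon(f\,dx)$ for smooth positive $f$, I will use the Donsker--Varadhan explicit formula extending Proposition \ref{prop:2.4} to nonzero drift. Writing $f = \rho\, g$ with $g$ the invariant density of $\mathcal{L}_\varepsilon$, the rate is a Dirichlet-type form $\varepsilon\int a\, g\,(\sqrt{\rho}')^2\,dx$ plus a term linear in the antisymmetric part. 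On the circle with nonreversible drift this is more delicate, so the alternative is to compute directly: plug $u = e^{w}$ into $-\int \mathcal{L}_\varepsilon u/u\,d\nu_\varepsilon$ and bound the supremum by completing the square, $-\varepsilon a(w''+w'^2) - b w' \le -\varepsilon a w'' + b^2/(4\varepsilon a)$. After integrating by parts against $f_\varepsilon$, this gives
\[
I_\varepsilon(f_\varepsilon\,dx) \le \int \Bigl(\frac{\bigl((\varepsilon a f_\varepsilon)' - b f_\varepsilon\bigr)^2}{4\varepsilon a f_\varepsilon}\Bigr)\,dx,
\]
which is the same Young-inequality argument as in Propositions \ref{prop:2.4} and \ref{prop:3.4}.

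The bound is then evaluated case by case.
\begin{itemize}
\item If $x_0$ is attracting or semi-stable ($B_i = 0$), take $f_\varepsilon$ proportional to $\frac{1}{a}\exp\bigl(\int b/(\varepsilon a)\bigr)$ locally. This makes the numerator vanish near $x_0$, and cutoffs far away contribute exponentially small cost.
\item If $x_0$ is repelling with $b'(x_0) > 0$, take a Gaussian $f_\varepsilon$ of variance of order $\varepsilon a(x_0)/b'(x_0)$. Then $(\varepsilon a f)' - b f \approx -2b'(x_0)(x - x_0) f$, and the integral is
\[
\frac{4b'(x_0)^2\,\mathbb{E}[(x-x_0)^2]}{4\varepsilon a(x_0)} \to b'(x_0) = B_i.
\]
\item For degenerate repelling zeros (order $\ge 2$ or flat), use a Gaussian of variance $\varepsilon\kappa$ with $\kappa$ large. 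The cost is then of order $1/\kappa$ plus terms that vanish, since $b = o(|x - x_0|)$, and one concludes with a diagonal argument, as $B_i = 0$.
\end{itemize}

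I expect the main obstacle to be making this upper bound rigorous and uniform. The steps requiring care are globalising $f_\varepsilon$ to the circle with zero net flux, controlling the error terms (the $O(|x|^2)$ part of $b$ over the support of $f_\varepsilon$), and justifying that the supremum over $C^2$ functions $u$ is bounded by the completed-square expression.
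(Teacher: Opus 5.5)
Your plan is essentially the paper's proof. The $\Gamma$-liminf comes from $I_\varepsilon(\mu_\varepsilon)\ge\int c\,d\mu_\varepsilon-\Lambda_\varepsilon(c)$ together with Theorem~\ref{thm:4.1}. The $\Gamma$-limsup reduces, via Proposition~\ref{prop:4.21} and convexity of $I_\varepsilon$, to a single Dirac mass at a zero $x_0$. It then uses the bound $I_\varepsilon(f\,dx)\le\frac{1}{4\varepsilon}\int_\Pi\frac{(\varepsilon(af)'-bf)^2}{af}\,dx$, which is a supremum over all $v\in C^1(\Pi)$, so no zero-flux condition is needed when you globalise $f_\varepsilon$. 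The recovery measures are Gaussians of variance $\varepsilon a(x_0)/\theta$: $\theta=b'(x_0)$ at repelling hyperbolic zeros, and $\theta\to0$ with a diagonal argument at degenerate zeros. At attracting zeros you use the exact profile $\frac1a\exp(\int b/(\varepsilon a))$ instead of the paper's Gaussian with $\theta=-b'(x_0)$; this is a harmless variant.

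Two points need fixing. First, the exact profile fails at a semi-stable zero, where $b$ has the same sign on both sides of $x_0$. There $\int_{x_0}^x b/(\varepsilon a)$ is monotone through $x_0$; for $a=1$, $b=x^2$ the density is $e^{x^3/(3\varepsilon)}$. The normalised measure therefore concentrates at the edge of the cutoff window, not at $x_0$, and your claim that cutoffs cost only exponentially little breaks down. Such zeros have $b'(x_0)=0$, so send them to your third construction (the wide Gaussian). Its estimate uses only $b=o(|x-x_0|)$, not repulsion.

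Second, the pointwise inequality $-\varepsilon a(w''+w'^2)-bw'\le-\varepsilon a w''+b^2/(4\varepsilon a)$ throws away the term $-\varepsilon a w'^2$. That term is exactly what absorbs $\varepsilon\int(af)'w'$ after integration by parts, so the supremum over $w$ is no longer controlled. Integrate by parts first, then apply Young's inequality to $\int(\varepsilon(af)'-bf)v-\varepsilon\int af v^2$ with $v=w'$; this gives your displayed bound, which is the paper's auxiliary lemma.
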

\begin{proof}
By arguing as in the first part of the proof of Proposition \ref{1111}, we obtain the \emph{$\Gamma$-liminf inequality}. Furthermore, the argument used in the second part of that proof allows us to restrict the analysis to measures whose support is contained in the zero set of $b$. However, since $I$ and $J$ do not coincide on this class of measures, one can no longer rely on a constant sequence to verify the \emph{$\Gamma$-limsup inequality}. Moreover, suppose that, for each zero $x_i$ of $b$, we have constructed a sequence of measures $(\mu_\varepsilon^i)_\varepsilon$ such that $\mu_\varepsilon^i \rightharpoonup \delta_{x_i}$ and 
$\limsup_{\varepsilon\to0} I_\varepsilon(\mu_\varepsilon^i)\leq J(\delta_{x_i})$,
then, for any measure of the form $\mu=\sum_{i=1}^N \alpha_i \delta_{x_i},$ one can show that the sequence of general term $\mu_\varepsilon:=\sum_{i=1}^N \alpha_i \mu_\varepsilon^i$ satisfies the \emph{$\Gamma$-limsup inequality}. Indeed, one has $\mu_\varepsilon \rightharpoonup \mu$, and, by convexity of $I_\varepsilon$,
\[
\limsup_{\varepsilon\to0} I_\varepsilon(\mu_\varepsilon)
\leq
\sum_{i=1}^N \alpha_i \limsup_{\varepsilon\to0} I_\varepsilon(\mu_\varepsilon^i)
\leq
\sum_{i=1}^N \alpha_i J(\delta_{x_i})
= J(\mu).
\]

Thus, we have reduced the problem to the construction of a sequence of measures $(\mu_\varepsilon)_\varepsilon$ satisfying the $\Gamma$-limsup inequality for a Dirac mass at a point where $b$ vanishes. Without loss of generality, we fix an index $i$, assume that $x_i = 0$, and that $0$ is either a zero of order $m$ or a flat zero of $b$. In the hyperbolic case, we pose \(\psi \in C^2(\Pi)\) be such that
\[
\psi(x)=\frac{x^2}{a(0)}\quad \text{for } |x|\le \delta,
\qquad
\psi(x)>0 \quad \text{for } x\neq 0,
\]
and define
\[
\mu_\varepsilon(dx)=\rho_\varepsilon(x)\,dx \,\,\, \text{with}\,\,\,
\rho_\varepsilon(x)=\frac{1}{Z_\varepsilon}
\exp\!\left(-\frac{\theta\,\psi(x)}{2\varepsilon}\right),
\]
where \(\theta>0\) is fixed and \(Z_\varepsilon\) is the normalization constant. Prior to analyzing the asymptotic behavior of \(I_\varepsilon(\mu_\varepsilon)\), we collect asymptotic estimates for the normalization constant \(Z_\varepsilon\) and the first moments of a random variable distributed according to \(\mu_\varepsilon(dx)\), which will be used below. On the region $|x|>\delta$, since $\psi$ is strictly positive, there exists $c>0$ such that
\[
\int_{|x|>\delta} \exp\!\left(-\frac{\theta\psi(x)}{2\varepsilon}\right)\,dx
= O\!\left(e^{-c/\varepsilon}\right),
\]
and where $|x|\le \delta$, using $\psi(x)=x^2/a(0)$, the change of variable $x=\sqrt{\varepsilon}\,y,$ yields
\[
\int_{|x|\le \delta} \exp\!\left(-\frac{\theta x^2}{2a(0)\varepsilon}\right)\,dx
=
\sqrt{\varepsilon}
\int_{|y|\le \delta/\sqrt{\varepsilon}}
\exp\!\left(-\frac{\theta y^2}{2a(0)}\right)\,dy \sim \sqrt{\varepsilon} \int_{\mathbb{R}} \exp\!\left(-\frac{\theta y^2}{2a(0)}\right)\,dy
=
\sqrt{\varepsilon} \, \sqrt{\frac{2\pi a(0)}{\theta}}.
\]

Therefore,
\begin{equation} \label{estimation1}
\int_\Pi \exp\!\left(-\frac{\theta\,\psi(x)}{2\varepsilon}\right)\,dx
\sim
\sqrt{\varepsilon} \, \sqrt{\frac{2\pi a(0)}{\theta}} \,\,\, \text{and}\,\,\, Z_\varepsilon \sim \frac{1}{\sqrt{\epsilon}}\,\sqrt{\frac{\theta}{2\pi a(0)}}.
\end{equation}

Similarly, the moments associated with a random variable distributed according to \(\mu_\varepsilon(dx)=\rho_\varepsilon(x)\,dx\) are asymptotically those of a centered normal distribution with variance \(\frac{a(0)}{\theta}\varepsilon\). In particular, for every \(p \geq 1\), the moments satisfy
\begin{equation} \label{estimation2}
\int_\Pi x^p\,\rho_\varepsilon(x)\,dx \leq \int_\Pi |x|^p\,\rho_\varepsilon(x)\,dx = O\!\left(\varepsilon^{p/2}\right).
\end{equation} 

In order to simplify the analysis of \(I_\varepsilon(\mu_\varepsilon)\), we first establish the following lemma.

\begin{lem}
Let \(\mu = f\,dx\), where \(f\) is a positive \(C^2(\Pi)\) function. Then
\begin{equation*}
I_\varepsilon(\mu) \leq \sup_{v \in {C}^1(\Pi)}
\left\{ - \int_{\Pi} \left(\varepsilon\, a(x)\bigl( v'(x)+ v(x)^2\bigr) + b(x)v(x) \right) \, d\mu(x)
\right\} = \frac{1}{4\varepsilon}\int_\Pi \frac{\bigl(\varepsilon (a f)' - b f\bigr)^2}{a f}\,dx.
\end{equation*}
\end{lem}

\begin{proof}
    The first inequality follows from the reduced formula
\begin{align*}
I_\varepsilon(\mu)
&= \sup_{\substack{u > 0 \\ u \in C^2(\Pi)}}
\left\{
- \int_{\Pi} \frac{\mathcal{L}_\varepsilon u}{u} \, d\mu
\right\}
= \sup_{v \in \widetilde{C}_0^1(\Pi)}
\left\{
- \int_{\Pi} \left(\varepsilon\, a(v'+ v^2) + bv \right) \, d\mu
\right\},
\end{align*}
together with the inclusion \(\widetilde{C}_0^1(\Pi) \subset C^1(\Pi)\). Moreover, taking $\mu = f dx$, we obtain
\begin{equation*}
   \sup_{v \in {C}^1(\Pi)}
\left\{
- \int_{\Pi} \left(\varepsilon\, a(v'+ v^2) + bv \right) \, d\mu
\right\} =  \sup_{v \in {C}^1(\Pi)}
\left\{\int_\Pi \bigl(\varepsilon(af)' - bf\bigr)v\,dx - \varepsilon \int_\Pi af\,v^2\,dx
\right\}.
\end{equation*}

To obtain the second equality, we set
\[
v_0(x) := \frac{\varepsilon (a f)'(x) - b(x) f(x)}{2 \varepsilon a(x) f(x)} \in C^1(\Pi),
\]
 and verify that
\begin{align*}
    \int_\Pi \bigl(\varepsilon(af)' - bf\bigr)v_0\,dx - \varepsilon \int_\Pi af\,v_0^2\,dx =  \frac{1}{4\varepsilon} \int_{\Pi} 
\frac{(\varepsilon (a f)' - b f)^2}{a f}\, dx.
\end{align*}

Moreover, the Young’s inequality leads us to
\begin{align*}
    & (\varepsilon\, (af)' - bf)v =  \frac{(\varepsilon\, (af)' - bf)}{\sqrt{2\epsilon a f}} \sqrt{2\epsilon a f}v \leq \frac{(\varepsilon\, (af)' - bf)^2}{4 \epsilon a f} + \epsilon af v^2.
\end{align*}

Taking the supremum over \(v \in C^1(\Pi)\), we obtain the upper bound. Since equality holds for \(v = v_0\), the result follows.
\end{proof}

Applying the previous lemma with \(\mu = \mu_\varepsilon\), which satisfies its assumptions, we obtain
\begin{align*}
 I_\varepsilon(\mu_\varepsilon) &\le \frac{1}{4\varepsilon}\int_\Pi
\frac{\bigl(\varepsilon (a\rho_\varepsilon)' - b\rho_\varepsilon\bigr)^2}
{a\rho_\varepsilon}\,dx =
\frac{1}{4\varepsilon}\int_\Pi
\frac{1}{a}\bigl(\varepsilon a'
-\frac{\theta}{2}a\psi'
-b\bigr)^2\,\rho_\varepsilon\,dx\\
&=
\frac{1}{4\varepsilon}\int_\Pi
\frac{1}{a}\bigl(\varepsilon a'
+\bigl(-b'(0)-\theta\bigr)x
+O(x^2)\bigr)^2\,\rho_\varepsilon\,dx \\
&=\frac{1}{4\varepsilon}\int_\Pi
\frac{1}{a}\Big(
\varepsilon^2 (a')^2
+2\varepsilon \, a'(-b'(0)-\theta) x
+(-b'(0)-\theta)^2 x^2
+O(\varepsilon x^2)
+O(x^3)
+O(x^4)
\Big)\rho_\varepsilon\,dx.
\end{align*}

Moreover, using \eqref{estimation1} and \eqref{estimation2}, we obtain the following estimates:
\begin{equation*}
    \frac{1}{4\varepsilon}\int_\Pi \frac{(-b'(0)-\theta)^2 x^2}{a}\,\rho_\varepsilon\,dx \sim \frac{(-b'(0)-\theta)^2}{4\varepsilon\,a(0)} \int_\Pi x^2 \rho_\varepsilon(x)\,dx \sim \frac{(-b'(0)-\theta)^2}{4\theta};
\end{equation*}
\begin{equation*}
\frac{1}{4\varepsilon}\int_\Pi \frac{\varepsilon^2 (a')^2}{a}\,\rho_\varepsilon\,dx = O(\varepsilon); \,\,\,\,\,\, 
\frac{1}{4\varepsilon}\int_\Pi \frac{2\varepsilon a'(-b'(0)-\theta)x}{a}\,\rho_\varepsilon\,dx = O(\sqrt{\varepsilon});\,\,\,\,\,\, \frac{1}{4\varepsilon}\int_\Pi \frac{O(\varepsilon x^2)}{a}\,\rho_\varepsilon\,dx = O(\epsilon);
\end{equation*}
\begin{equation*}
    \frac{1}{4\varepsilon}\int_\Pi \frac{O(x^3)}{a}\,\rho_\varepsilon\,dx = O(\sqrt{\epsilon})  \,\,\, \text{and}\,\,\, \frac{1}{4\varepsilon}\int_\Pi \frac{O(x^4)}{a}\,\rho_\varepsilon\,dx = O(\epsilon),
\end{equation*}
that yield
\[
\limsup_{\varepsilon\to0} I_\varepsilon(\mu_\varepsilon)
\le
\frac{(-b'(0)-\theta)^2}{4\theta}.
\]

If \(b'(0) > 0\), taking \(\theta=b'(0)\) gives $\limsup_{\varepsilon\to0} I_\varepsilon(\mu_\varepsilon)\le b'(0).$ Likewise, if \(b'(0) < 0\), taking \(\theta=-b'(0)\) gives $\limsup_{\varepsilon\to0} I_\varepsilon(\mu_\varepsilon)\le 0.$ Hence,
\[
\limsup_{\varepsilon\to0} I_\varepsilon(\mu_\varepsilon)
\le
\max\bigl(0,b'(0)\bigr) = J(\delta_0).
\]

In the degenerate case, it suffices to consider the same sequence of measures. The previous computations remain unchanged and yield
\[
\limsup_{\varepsilon\to0} I_\varepsilon(\mu_\varepsilon)
\le
\frac{\theta}{4}.
\]
Finally, letting $\theta$ tend to $0$ gives the desired result.
\end{proof}

Using the same argument as in the proof of Corollary~\ref{11113}, we deduce the following corollary.
\begin{cor}
Under assumption \text{(H3)}, for every open set $O \subset P(\Pi)$ and closed set $F \subset P(\Pi)$
\[
\liminf_{\varepsilon\to 0}\,\liminf_{T\to\infty} \frac{1}{T}\ln \mathbb{P}(\mu_T^\varepsilon \in O)
\geq
-\inf_{\mu\in O} J(\mu),
\]
\[
\limsup_{\varepsilon\to 0}\,\limsup_{T\to\infty} \frac{1}{T}\ln \mathbb{P}(\mu_T^\varepsilon \in F)
\leq
-\inf_{\mu\in F} J(\mu).
\]
\end{cor}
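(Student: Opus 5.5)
The proof follows the template of Corollary~\ref{11113}, now using Proposition~\ref{11112} in place of Proposition~\ref{1111}. The plan is to combine, for each fixed $\varepsilon>0$, the Donsker--Varadhan LDP for $(\mu_T^\varepsilon)_{T>0}$ with speed $T$ and rate $I_\varepsilon$, with the $\Gamma$-convergence $I_\varepsilon \xrightarrow{\Gamma} J$ under (H3). Proposition~\ref{111} then transfers $\Gamma$-convergence into convergence of infima over open and closed sets. Since $\Pi$ is compact, $P(\Pi)$ is weakly compact. The argument of Proposition~\ref{111}, written for sequences $n\to\infty$, applies verbatim along any sequence $\varepsilon_k\to0$, so it yields the stated $\liminf/\limsup$ in $\varepsilon$.

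For the lower bound, I fix an open set $O\subset P(\Pi)$. The DV lower bound gives $\liminf_{T\to\infty}\frac1T\ln\mathbb P(\mu_T^\varepsilon\in O)\ge -\inf_{O} I_\varepsilon$ for every $\varepsilon>0$. Taking $\liminf_{\varepsilon\to0}$ gives
\[
\liminf_{\varepsilon\to0}\liminf_{T\to\infty}\frac1T\ln\mathbb P(\mu_T^\varepsilon\in O)\ge -\limsup_{\varepsilon\to 0}\inf_{O}I_\varepsilon\ge -\inf_O J,
\]
where the last inequality is the first part of Proposition~\ref{111}, which rests on the $\Gamma$-limsup half of Proposition~\ref{11112}.

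For the upper bound, I fix a closed set $F$. The DV upper bound on the compact space $P(\Pi)$ gives $\limsup_{T\to\infty}\frac1T\ln\mathbb P(\mu_T^\varepsilon\in F)\le -\inf_F I_\varepsilon$. Taking $\limsup_{\varepsilon\to0}$ and using the second part of Proposition~\ref{111}, which rests on the $\Gamma$-liminf inequality, gives
\[
\limsup_{\varepsilon\to0}\limsup_{T\to\infty}\frac1T\ln\mathbb P(\mu_T^\varepsilon\in F)\le -\liminf_{\varepsilon\to0}\inf_F I_\varepsilon\le -\inf_F J.
\]

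The only point needing care is that Proposition~\ref{11112} really supplies both $\Gamma$-inequalities for every $\mu\in P(\Pi)$, including those where $J(\mu)=+\infty$.
\begin{itemize}
\item For $\operatorname{supp}\mu\not\subset\{b=0\}$, Proposition~\ref{prop:4.21} gives $J(\mu)=+\infty$, so the $\Gamma$-limsup inequality is trivial.
\item For atomic measures on the zeros, the Gaussian-type sequences $\mu_\varepsilon$ combined by convexity give the recovery sequences.
\item The $\Gamma$-liminf inequality uses only Theorem~\ref{thm:4.1} through $\Lambda_\varepsilon(c)\to\Lambda(c)$ for each $c$.
\end{itemize}
Once this is in place, the corollary is a purely formal consequence of the two propositions. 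I also expect no uniformity in the initial condition to be needed, since the DV LDP on the compact circle holds for every starting point.
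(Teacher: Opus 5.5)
Your proposal is correct and is the paper's argument: the paper proves this corollary by repeating the proof of Corollary~\ref{11113}, applying the Donsker--Varadhan bounds for each fixed $\varepsilon$ and then passing to the limit via Proposition~\ref{111}, with Proposition~\ref{11112} supplying $I_\varepsilon \xrightarrow{\Gamma} J$ under (H3). Your extra check that both $\Gamma$-inequalities hold for every $\mu$ just re-reads the proof of Proposition~\ref{11112} and is consistent with it: the limsup inequality is trivial where $J(\mu)=+\infty$ and comes from Gaussian recovery sequences plus convexity on atomic measures, and the liminf inequality uses $\Lambda_\varepsilon(c)\to\Lambda(c)$.
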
}


\begin{thebibliography}{99}

{\color{red} \bibitem{BertiniGabrielliLandim2023}
L.~Bertini, D.~Gabrielli and C.~Landim,
\textit{Large deviations for diffusions: Donsker and Varadhan meet Freidlin and Wentzell}.
Ensaios Mat. 38, 77--104, 2023.} 

{\color{red} \bibitem{DiGesuMariani2017}
G.~Di Gesù and M.~Mariani,
\textit{Full metastable asymptotic of the Fisher information}.
SIAM J. Math. Anal. 49, 3048--3072, 2017.} 

\bibitem{DonskerVaradhan1975}
M.~D. Donsker and S.~R.~S. Varadhan,
\textit{Asymptotic Evaluation of Certain Markov Process Expectations for Large Time, I}.
Comm. Pure Appl. Math. \textbf{28} (1975), 1--47.

\bibitem{DonskerVaradhan1976}
M.~D. Donsker and S.~R.~S. Varadhan,
\textit{On the Principal Eigenvalue of Second-Order Elliptic Differential Operators}.
Comm. Pure Appl. Math. \textbf{29} (1976), 595--621.

\bibitem{Evans2010}
L.~C. Evans,
\textit{Partial Differential Equations}, 2nd ed.
Graduate Studies in Mathematics, Vol.~19, American Mathematical Society, Providence, 2010.

{\color{red}
\bibitem{FaggionatoGabrielli2010}
A.~Faggionato and D.~Gabrielli,
\textit{A representation formula for large deviations rate functionals of invariant measures on the one-dimensional torus}.
Annales de l'Institut Henri Poincaré, Probabilités et Statistiques \textbf{48} (2012), no.~1, 105--122.}

\bibitem{Fathi2008}
A.~Fathi,
\textit{Weak KAM Theorem in Lagrangian Dynamics}.
Preliminary Version No.~10, ENS Lyon, 2008.

\bibitem{FengKurtz2006}
J.~Feng and T.~G. Kurtz,
\textit{Large Deviations for Stochastic Processes}.
Mathematical Surveys and Monographs, Vol.~131, American Mathematical Society, Providence, 2006.

{\color{red} \bibitem{Brezis2011}
C.~Fonte Sanchez, P.~Gabriel, S.~Mischler,
\textit{On the Krein-Rutman theorem and beyond.}}

\bibitem{Kifer1980}
Y.~Kifer,
\textit{On the principal eigenvalue in a singular perturbation problem with hyperbolic limit points and circles}.
J. Differential Equations \textbf{35} (1980), 108--139.

\bibitem{Kifer1990}
Y.~Kifer,
\textit{Principal eigenvalues, topological pressure, and stochastic stability of equilibrium states}.
Israel J. Math. \textbf{70} (1990), 1--47.

\bibitem{PengZhangZhou2020}
R.~Peng, G.~Zhang, and M.~Zhou,
\textit{Asymptotic behavior of the principal eigenvalue of a linear second order elliptic operator with small/large diffusion coefficient and its application}.
Nonlinear Anal. \textbf{191} (2020), 111607.

\bibitem{PiatnitskiRybalko2015}
A.~Piatnitski, A.~Rybalko, and V.~Rybalko,
\textit{Singularly perturbed spectral problems with Neumann boundary conditions}.
Complex Var. Elliptic Equ. \textbf{61} (2015), 252--274.

\bibitem{ProtterWeinberger1997}
M.~H. Protter and H.~F. Weinberger,
\textit{Maximum Principles in Differential Equations}.
Springer, 1997.

{\color{red}
\bibitem{Raquepas2024}
R.~Raquépas,
\textit{The large-time and vanishing-noise limits for entropy production in nondegenerate diffusions}.
Ann. Inst. Henri Poincaré Probab. Stat. 60, 431--462, 2024.} 
\end{thebibliography}
\end{document}